\documentclass[12pt]{article}
\usepackage{amssymb,amsmath, amsfonts, amsthm}
\usepackage{bbding}
\usepackage[latin1]{inputenc}
\usepackage{pstricks,pst-node,pst-text,pst-3d,pst-plot,float}
\usepackage{enumerate}
\usepackage[total={6.5in,9.75in}, top=1.2in, left=0.9in, includefoot]{geometry}


\allowdisplaybreaks 

\newcommand{\norm}[1]{\left\Vert#1\right\Vert}

\newcommand{\abs}[1]{\left\vert#1\right\vert}
\newcommand{\Y}{\mathcal{Y}}
\newcommand{\X}{\mathcal{X}}
\newcommand{\D}{\mathcal{D}}

\newcommand{\R}{\mathcal{R}}
\newcommand{\Z}{\mathcal{Z}}
\newcommand{\RE}{{\rm I}\!{\rm R}} 


\newcommand{\ce}{{\scriptscriptstyle 0}}
\newcommand{\ds}{\displaystyle}
\newcommand{\s}{\scriptscriptstyle}

 \newtheorem{thm}{Theorem}[section]
 \newtheorem{cor}[thm]{Corollary}
 \newtheorem{lem}[thm]{Lemma}
 \newtheorem{prop}[thm]{Proposition}
 \newtheorem{defn}[thm]{Definition}
 \newtheorem{rem}[thm]{Remark}

\title{Regularization methods for ill-posed problems in multiple Hilbert scales \thanks{This work was
supported in part by Consejo Nacional de Investigaciones
Cient\'{\i}ficas y T\'{e}cnicas, CONICET, through PIP 2010-2012 Nro. 0219,
by Universidad Nacional del Litoral, U.N.L., through project CAI+D
2009-PI-62-315, by Agencia Nacional de Promoci\'{o}n Cient\'{\i}fica y
Tecnol\'{o}gia ANPCyT, through project PICT 2008-1301 and by the Air
Force Office of Scientific Research, AFOSR, through Grant
FA9550-10-1-0018.}}

\author{Gisela L. Mazzieri\thanks{Instituto de Matem\'{a}tica Aplicada del Litoral, IMAL,
CONICET-UNL, G\"{u}emes 3450, S3000GLN, Santa Fe, Argentina, and Departamento de Matem\'{a}tica,
 Facultad de Bioquimica y Ciencias Biologicas, Universidad Nacional del Litoral, Santa Fe,
 Argentina({\tt gmazzieri@hotmail.com}).} \and Ruben D. Spies$^\text{\Envelope,\,}$\thanks{Instituto de Matem\'{a}tica Aplicada
del Litoral, IMAL, CONICET-UNL, G\"{u}emes 3450, S3000GLN, Santa Fe,
Argentina and Departamento de Matem\'{a}tica, Facultad de Ingenier\'{\i}a
Qu\'{\i}mica, Universidad Nacional del Litoral, Santa Fe, Argentina
(\Envelope\,: {\tt rspies@santafe-conicet.gov.ar}).}}
\begin{document}
\date{\empty}
\maketitle
\begin{abstract}
Several convergence results in Hilbert scales under different
source conditions are proved and orders of convergence and optimal
orders of convergence are derived. Also, relations between those
source conditions are proved. The concept of a multiple Hilbert
scale on a product space is introduced, regularization methods on
these scales are defined, both for the case of a single
observation and for the case of multiple observations. In the
latter case, it is shown how vector-valued regularization
functions in these multiple Hilbert scales can be used. In all
cases convergence is proved and orders and optimal orders of
convergence are shown.
\end{abstract}

\noindent {\bf Keywords:} Inverse problem, Ill-posed, Hilbert
scale, regularization.

\medskip
\noindent {\bf AMS Subject classifications:} 47A52, 65J20

\pagestyle{myheadings} \thispagestyle{plain} \markboth{G. L.
MAZZIERI and R. D. SPIES.}{Regularization methods in multiple
Hilbert scales}
\setcounter{page}{1}

\section{Introduction}\label{intro}
Quite often an inverse problem can be formulated as the need for
determining $x$ in an equation of the form
\begin{equation}\label{eq:prob-inv}
Tx=y,
\end{equation}
where $T$ is a linear bounded operator between two infinite
dimensional Hilbert spaces $\X$ and $\Y$, the range of $T$,
$\R(T)$, is non-closed and $y$ is the data, which is known,
perhaps with a certain degree of error. It is well known that
under these hypotheses, problem (\ref{eq:prob-inv}) is ill-posed
in the sense of Hadamard (\cite{ref:Hadamard-1902}). The
ill-posedness is reflected in the fact that $T^\dag$, the
Moore-Penrose generalized inverse of $T$, is unbounded and
therefore small errors or noise in the data $y$ can result in
arbitrarily large errors in the corresponding approximated
solutions (see \cite{ref:Spies-Temperini-2006},
\cite{ref:Seidman-1980}), turning unstable all standard numerical
approximation methods, making them unsuitable for most
applications and inappropriate from any practical point of view.
The so called ``regularization methods" are mathematical tools
designed to restore stability to the inversion process and consist
essentially of parametric families of continuous linear operators
approximating $T^\dag.$ The mathematical theory of regularization
methods is very wide (a comprehensive treatise on the subject can
be found in the book by Engl, Hanke and Neubauer,
\cite{refb:Engl-Hanke-Neubauer-1996}) and it is of great interest
in a broad variety of applications in many areas such as Medicine,
Physics, Geology, Geophysics, Biology, image restoration and
processing, etc.

There exist numerous ways of regularizing an ill-posed inverse
problem. Among the most standar and traditional methods we mention
the Tikhonov-Phillips method (\cite{ref:Phillips-1962},
\cite{ref:Tikhonov-1963-SMD-2}, \cite{ref:Tikhonov-1963-SMD-1}),
truncated singular value decomposition (TSVD), Showalter's method,
total variation regularization  (\cite{ref:Acar-Vogel-1994}), etc.
Among all regularization methods, probably the best known and most
commonly and widely used is the Tikhonov-Phillips method, which
was originally proposed by Tikhonov and Phillips in 1962 and 1963
(see \cite{ref:Phillips-1962}, \cite{ref:Tikhonov-1963-SMD-2},
\cite{ref:Tikhonov-1963-SMD-1}). Although this method can be
formalized within a very general framework by means of spectral
theory (\cite{refb:Engl-Hanke-Neubauer-1996},
\cite{refb:Dautray-Lions-1990}), the widespread of its use is
undoubtedly due to the fact that it can also be formulated in a
very simple way as an optimization problem. In fact, the
regularized solution of problem (\ref{eq:prob-inv}) obtained by
applying Tikhonov-Phillips method is also the minimizer $x_\alpha$
of the functional
\begin{equation}\label{eq:formulacion variacional TP0}
J_\alpha(x)\doteq \norm{Tx-y}^2+\alpha\norm x^2,
\end{equation}
where $\alpha$ is a positive constant known as the regularization
parameter. The penalizing term $\alpha\norm x^2$ in
(\ref{eq:formulacion variacional TP0}) not only induces stability
but it also determines certain regularity properties of the
approximating regularized solutions $x_\alpha$ and of the
corresponding least-squares solution which they approximate as
$\alpha\to 0^+$. Thus, for instance, it is well known that
minimizers of (\ref{eq:formulacion variacional TP0}) are always
``smooth" and, for $\alpha\rightarrow 0^+$, they approximate the
least-squares solution of minimum norm of (\ref{eq:prob-inv}),
that is $\lim_{\alpha\to 0^+} x_\alpha=T^\dag y$. This method is
more precisely known as the Tikhonov-Phillips method of order
zero. Other penalizers in (\ref{eq:formulacion variacional TP0})
can also be used. For instance, in his original articles
(\cite{ref:Tikhonov-1963-SMD-2}, \cite{ref:Tikhonov-1963-SMD-1}),
Tikhonov considered the more general functional
\begin{equation}\label{eq:Tikhonov-general}
J_{\alpha(x),L}\doteq \norm{Tx-y}^2+\alpha\norm{Lx}^2,
\end{equation}
where $L$ is an operator defined on a certain domain
$\D(L)\subset\X$, into a Hilbert space $\Z$. Usually $L$ is a
differential operator and hence it has a nontrivial nullspace. In
spline smoothing problems for instance (see
\cite{refb:Wahba-1990}), $L$ is taken as the second derivative
operator.

The use of (\ref{eq:Tikhonov-general}) to regularize problem
(\ref{eq:prob-inv}) automatically implies the a-priori knowledge
or assumption that the exact solution belongs to $\D(L)$. This
approach gives rise to the theory of generalized inverses and
regularization with seminorms (see for instance
\cite{refb:Engl-Hanke-Neubauer-1996}, Chapter 8). The use of
Hilbert scales becomes appropriate when there is no certainty that
the exact solution is in fact an element of $\D(L)$.

The structure of this article is as follows. In Section 2 we
briefly recall the theory of regularization methods in Hilbert
scales. In Section 3 we prove several convergence results in
Hilbert scales under different source conditions and establish
orders of convergence and optimal orders of convergence. Also,
relations between those source conditions are proved. In Section 4
the concept of a multiple Hilbert scale on a product space is
introduced, regularization methods on these scales are defined,
first for the case of a single observation and then for the case
of multiple observations. In the latter case, it is shown how
vector-valued regularization functions in these multiple Hilbert
scales can be used. In all cases convergence is proved and orders
and optimal orders of convergence are shown.


\section{Regularization in Hilbert Scales}\label{sec:1}

In this section we will introduce the definition of a Hilbert
scale and a few known results that will be needed later. All of
them can be found in classical books and articles on the subject
such as \cite{refb:Engl-Hanke-Neubauer-1996} and
\cite{ref:Natterer-1984-AA}.

Throughout this work we will assume that $L$ is a densely defined,
unbounded, strictly positive self-adjoint operator on a Hilbert
space $\X$, so that $L$ is closed and satisfies $\langle
Lx,y\rangle = \langle x, Ly\rangle$ for every $x,y\in\D(L)$ and
there exists a positive constant $\gamma$ such that
\begin{equation}\label{eq:constante de L estric posit}
\langle Lx,x\rangle\geq \gamma \norm{x}^2 \hspace{0.2cm} \text{for
every } x\in\D(L).
\end{equation}
Consider the set $\mathcal M$ of all elements $x\in\X$ for which
all natural powers of $L$ are defined, that is $\displaystyle
\mathcal M\doteq \bigcap _{k=1} ^\infty \D(L^k)$. By using
spectral theory it can be easily shown that the fractional powers
$L^s$ are well defined over $\mathcal M$ for every $s\in\mathbb R$
and that
\begin{equation} \label{def:the-set-M}
\mathcal M = \bigcap
_{s\in\mathbb R} \D(L^s)
\end{equation}
(for a detailed and comprehensive treatment of fractional powers
of strictly positive self-adjoint operators see for instance
\cite{refb:Pazy-1983} and \cite{refb:Dautray-Lions-1990}).
\begin{defn}\label{def:escalas de Hilbert}
(Hilbert scales) Let $\mathcal M$ be defined as in
(\ref{def:the-set-M}). For every $t\in\mathbb R$ we define
\begin{equation} \label{def:s-inner-product}
\langle x, y\rangle_t \doteq \langle L^tx, L^ty\rangle,\quad
\text{for }x,y,\in\mathcal M.
\end{equation}
It can be immediately seen that $\langle\cdot,\cdot\rangle_t$
defines an inner product in $\mathcal M$, which in turn induces a
norm $\|x\|_t=\|L^tx\|$. The Hilbert space $\X_t$ is defined as
the completion of $\mathcal M$ with respect to this norm
$\norm{\cdot}_t$. The family of spaces $(\X_t)_{t\in\mathbb R}$ is
called the Hilbert scale induced by $L$ over $\X$. The operator
$L$ is called a ``generator" of the Hilbert scale
$(\X_t)_{t\in\mathbb R}$.
\end{defn}
\begin{rem}
Note that a Hilbert scale is a completely ordered (by set inclusion)
parametric family of Hilbert spaces and if the operator $L$ is
bounded then $\X_t=\X$ for every $t\in\mathbb R$.
\end{rem}
The following proposition constitutes one of the fundamental
results for the treatment of inverse ill-posed problems in Hilbert
scales.

\begin{prop}\label{prop:8.19}
Let $(\X_t)_{t\in\mathbb R}$ be the Hilbert scale induced  by $L$
over $\X$. Then the following is true:
\begin{enumerate}[i)]
\item For every  $s,t\in\mathbb R$  such that $-\infty<s<t<\infty$, the space $\X_t$
is continuously and densely embedded in $\X_s$.
\item Let $s,t\in\mathbb R$. The operator $L^{t-s}$ defined on $\mathcal
M$ has a unique extension to $\X_t$ which is an isomorphism
(surjective isometry) from $\X_t$ onto $\X_s$. This extension,
also denoted with $L^{t-s}$, is self-adjoint and strictly positive
seen as an operator in $\X_s$ with domain $\X_t$, if $t>s$. Also,
the identity $L^{t-s}=L^tL^{-s}$ is valid for the appropriate
extensions. In particular $(L^s)^{-1}=L^{-s}$.
\item If $s\geq 0$, then $\X_s=\D(L^s)$ and $\X_{-s}=(\X_s)'$;
that is $\X_{-s}$ is the topological dual of $\X_s$ (with the
topology induced by the norm in $\X$).
\item Let $q,r,s\in\mathbb R$ be such that $-\infty<q<r<s<\infty$ y $x\in\X_s$. Then the
following interpolation inequality holds:
\begin{equation}\label{eq:des-interp-Esc Hilb}
\norm x_r\leq
\norm{x}_q^{\frac{s-r}{s-q}}\norm{x}_s^{\frac{r-q}{s-q}}.
\end{equation}
\end{enumerate}
\end{prop}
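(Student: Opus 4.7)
The proof divides naturally into the four parts. Throughout, the plan is to do all constructions first on the common dense core $\mathcal M$ using the spectral calculus for $L$, and then extend by continuity to the completions $\X_t$. The key analytic input is that strict positivity (\ref{eq:constante de L estric posit}) together with self-adjointness gives, via the spectral theorem, $L\ge \gamma I$ and hence $\|L^{-\beta}\|\le \gamma^{-\beta}$ for every $\beta\ge 0$.

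For part (i), for $s<t$ and $x\in\mathcal M$ I would write $L^s x = L^{-(t-s)}L^t x$ and apply the spectral bound above to obtain $\|x\|_s\le\gamma^{-(t-s)}\|x\|_t$. This inequality on the dense subspace $\mathcal M$ extends the identity map to a continuous injection $\X_t\hookrightarrow\X_s$; density is automatic since the image contains $\mathcal M$, which is dense in $\X_s$ by construction. For part (ii), observe that for $x\in\mathcal M$ one has $\|L^{t-s}x\|_s=\|L^s L^{t-s}x\|=\|L^t x\|=\|x\|_t$, so $L^{t-s}:\mathcal M\to\mathcal M$ is an isometry in the $(\|\cdot\|_t,\|\cdot\|_s)$ norms and extends uniquely to a surjective isometry $\X_t\to\X_s$; the composition identity $L^{t-s}=L^t L^{-s}$ and $(L^s)^{-1}=L^{-s}$ follow on $\mathcal M$ and pass to the extensions. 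Self-adjointness and strict positivity (for $t>s$) of the extension, regarded as an operator from $\X_t\subset\X_s$ to $\X_s$, follow from the corresponding properties of $L^{t-s}$ on $\mathcal M$ together with the spectral bound.

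For part (iii), for $s\ge0$ I would show $\X_s=\D(L^s)$ by verifying that $\|\cdot\|_s$ is equivalent to the graph norm of $L^s$ on $\mathcal M$ (using $\|x\|\le\gamma^{-s}\|L^s x\|$), that $\D(L^s)$ is complete in this norm since $L^s$ is closed, and that $\mathcal M$ is a core for $L^s$ by a standard spectral cut-off argument. The duality $\X_{-s}=(\X_s)'$ is the most delicate point: here I would fix $\X=\X_0$ as pivot space, identify it with its own dual via Riesz, and then use the isometry $L^{s}:\X_s\to\X_0$ from part (ii) to transport the duality: every continuous linear functional $\ell$ on $\X_s$ has the form $\ell(x)=\langle L^s x,z\rangle$ for a unique $z\in\X_0$, which by the isometry $L^{-s}:\X_0\to\X_s$ corresponds to a unique element of $\X_{-s}$, yielding an isometric identification $\X_{-s}\simeq(\X_s)'$.

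For part (iv), I would prove (\ref{eq:des-interp-Esc Hilb}) on $\mathcal M$ and then extend by density. Setting $\theta=\frac{r-q}{s-q}\in(0,1)$, so that $2r=2q(1-\theta)+2s\,\theta$, and using the spectral measure $\{E_\lambda\}$ of $L$, I would write
\[
\|x\|_r^2=\int\lambda^{2r}\,d\|E_\lambda x\|^2=\int\lambda^{2q(1-\theta)}\lambda^{2s\theta}\,d\|E_\lambda x\|^2
\]
and apply Hölder's inequality with exponents $\frac{1}{1-\theta}$ and $\frac{1}{\theta}$ to the last integral, obtaining $\|x\|_r^2\le\|x\|_q^{2(1-\theta)}\|x\|_s^{2\theta}$, which is exactly (\ref{eq:des-interp-Esc Hilb}).

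I expect the main obstacle to lie in part (iii), specifically the duality $\X_{-s}=(\X_s)'$, since it requires carefully fixing the pivot $\X_0=\X$ and making sure the Riesz identification is compatible with the extended fractional powers; the other parts reduce in a routine way to the spectral calculus on the core $\mathcal M$.
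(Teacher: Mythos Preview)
Your proof is correct in substance; each of the four parts follows the standard route via the spectral calculus on the core $\mathcal M$, and the interpolation inequality in (iv) via H\"older on the spectral measure is exactly the right argument. The one place to be slightly more careful is the self-adjointness claim in (ii): to conclude that the extended $L^{t-s}$, viewed as an unbounded operator in $\X_s$ with domain $\X_t$, is self-adjoint (not merely symmetric), you should invoke that the extension is unitarily equivalent, via the isometry $L^s:\X_s\to\X_0$, to the operator $L^{t-s}$ acting in $\X_0=\X$ with its natural domain $\D(L^{t-s})$, which is self-adjoint by the spectral theorem.

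As for comparison with the paper: the paper does not give a proof at all; it simply refers the reader to Proposition~8.19 of Engl--Hanke--Neubauer. So your proposal is not an alternative route but rather supplies the argument that the paper outsources to the reference. The approach you outline is essentially the one found there.
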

\begin{proof}
See \cite{refb:Engl-Hanke-Neubauer-1996}, Proposition 8.19.
\end{proof}
In the remaining of this section we will state several results
which will be of fundamental importance in the following sections.
In all cases we have been included appropriate references where
their proofs can be found.
\begin{thm}\label{teo:desigualdad de Heinz}
(Heinz Inequality) Let $A$ and $L$ be two linear, unbounded
densely defined, strictly positive, self-adjoint operators on a
Hilbert Space $\X$ such that
\begin{equation}\label{eq:inclusion dominios Heinz}
\D(A)\subset\D(L)
\end{equation}
and
\begin{equation}\label{eq:1 teo Heinz}
\norm{Lx}\leq\norm{Ax} \hspace{0.5cm}\forall \,\,x\in\D(A).
\end{equation}
Then for every $\nu\in [0,1]$ there holds
\begin{equation}\label{eq:2 teo Heinz}
\D(A^\nu)\subset\D(L^\nu)
\end{equation}
 and
\begin{equation}\label{eq:3 teo Heinz}
\norm{L^\nu x}\leq\norm{A^\nu x} \hspace{0.5cm} \forall \,\,x\in
\D(A^\nu).
\end{equation}
\end{thm}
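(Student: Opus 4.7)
The plan is to reduce the theorem to the classical L\"owner-Heinz operator monotonicity theorem, which asserts that $t\mapsto t^\nu$ is operator monotone on $[0,\infty)$ for each $\nu\in[0,1]$: if $S$ and $T$ are positive self-adjoint operators with $S\le T$ in the quadratic-form sense, then $S^\nu\le T^\nu$. The endpoints $\nu=0$ and $\nu=1$ of the present theorem are trivial (the first is the identity, the second is the hypothesis), so the content lies entirely in $\nu\in(0,1)$.

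The reduction itself is direct. Hypothesis (\ref{eq:1 teo Heinz}) together with the domain inclusion (\ref{eq:inclusion dominios Heinz}) is exactly the form inequality $\langle L^2x,x\rangle\le \langle A^2x,x\rangle$ for $x\in\D(A)$, since self-adjointness gives $\norm{Bx}^2=\langle B^2x,x\rangle$ for $B=A$ or $L$. Applying L\"owner-Heinz with exponent $\nu$ to the strictly positive self-adjoint operators $L^2$ and $A^2$ delivers $L^{2\nu}\le A^{2\nu}$ together with the form-domain inclusion $\D(A^\nu)=\D((A^2)^{\nu/2})\subset \D((L^2)^{\nu/2})=\D(L^\nu)$, which is (\ref{eq:2 teo Heinz}). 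Evaluating the resulting form inequality on any $x\in\D(A^\nu)$ then yields
\[
\norm{L^\nu x}^2=\langle L^{2\nu}x,x\rangle\le \langle A^{2\nu}x,x\rangle=\norm{A^\nu x}^2,
\]
which is (\ref{eq:3 teo Heinz}).

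The main obstacle is the rigorous justification of L\"owner-Heinz for the unbounded operators $A$ and $L$. I would handle this by first establishing the inequality for the bounded, boundedly invertible spectral truncations $A_n\doteq \int_\gamma^n \lambda\, dE_A(\lambda)$ and $L_n\doteq \int_\gamma^n \lambda\, dE_L(\lambda)$, where $\gamma$ is the strict-positivity constant of (\ref{eq:constante de L estric posit}). In this bounded regime L\"owner-Heinz can be proved directly through Heinz's complex-interpolation argument: the operator-valued function $z\mapsto L_n^z A_n^{-z}$ is analytic on the strip $0\le\text{Re}\,z\le 1$, has norm at most $1$ on $\text{Re}\,z=0$ (a product of unitaries) and on $\text{Re}\,z=1$ (by hypothesis, via $\norm{L_n A_n^{-1}y}\le\norm{y}$ with $y=A_n x$), so the Phragm\'en-Lindel\"of principle applied to each scalar function $z\mapsto\langle L_n^z A_n^{-z}\phi,\psi\rangle$ yields $\norm{L_n^\nu A_n^{-\nu}}\le 1$ for $\nu\in[0,1]$. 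Passing $n\to\infty$ via dominated convergence on the spectral integrals then transfers the inequality to the unbounded operators $A$ and $L$ on $\D(A^\nu)$, completing the proof.
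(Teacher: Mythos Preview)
The paper does not actually prove this theorem: its ``proof'' consists solely of citations to \cite{refb:Engl-Hanke-Neubauer-1996}, \cite{ref:Heinz-1951} and \cite{ref:Krein-Petunin-1966}. Your proposal goes further and sketches the classical argument (reduction to L\"owner--Heinz via complex interpolation), which is indeed the standard route taken in those references.

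The strategy is sound, but the truncation step as written has loose ends. First, $A_n=\int_\gamma^n\lambda\,dE_A(\lambda)$ is \emph{not} boundedly invertible on $\X$: it annihilates $E_A((n,\infty))\X$. Second, $L_n^{it}$ is a partial isometry, not a unitary. Third, and more substantively, truncating $A$ and $L$ by their \emph{own} spectral measures does not a priori preserve the hypothesis: for $x\in E_A((n,\infty))\X$ one has $A_nx=0$ while $L_nx$ need not vanish. These issues are all repairable if you restrict attention to $\X_n^A\doteq E_A([\gamma,n])\X$: there $A_n=A|_{\X_n^A}$ is boundedly invertible, and for $x\in\X_n^A$ one has $\|L_nx\|\le\|Lx\|\le\|Ax\|=\|A_nx\|$ since the $L$-truncation only decreases norms. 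The three-lines argument then runs on operators $\X_n^A\to\X$, and a careful double limit (first $m\to\infty$ in $\|L_m^\nu x_n\|\le\|L_n^\nu x_n\|\le\|A^\nu x\|$, then $n\to\infty$) transfers the bound to $\D(A^\nu)$.

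A cleaner alternative that bypasses truncation entirely: strict positivity makes $A^{-1}$ and $L^{-1}$ bounded, the hypothesis gives $\|LA^{-1}\|\le 1$ and hence (by taking adjoints) $A^{-2}\le L^{-2}$ as bounded operators; the \emph{bounded} L\"owner--Heinz theorem then yields $A^{-2\nu}\le L^{-2\nu}$, and Douglas' range-inclusion lemma converts this into $\D(A^\nu)=\R(A^{-\nu})\subset\R(L^{-\nu})=\D(L^\nu)$ together with the norm estimate.
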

\begin{proof}
See \cite{refb:Engl-Hanke-Neubauer-1996}, Proposition 8.21, page
213 (see also \cite{ref:Heinz-1951} and
\cite{ref:Krein-Petunin-1966}).
\end{proof}
\begin{rem}
It is important to point out here that the result of Theorem
\ref{teo:desigualdad de Heinz} remains valid under slightly weaker
hypotheses on the involved operators. More precisely, it can be
shown that the result remains valid if the operators $A$ and $L$
satisfy conditions (\ref{eq:inclusion dominios Heinz}) and
(\ref{eq:1 teo Heinz}) and are self-adjoint and nonnegative
instead of strictly positive.
\end{rem}
\begin{lem}\label{lem:R(T*)=Xa}
Let $T:\X\longrightarrow\Y$ be a linear bounded operator between
the Hilbert spaces $\X$ and $\Y$ and $L$ a linear, densely
defined, self-adjoint, unbounded and strictly positive operator on
the space $\X$. Let $(\X_t)_{t\in\mathbb R}$ be the Hilbert scale
induced by $L$ over $\X$. If there exist constants $0<m\leq
M<\infty$ and $a\in\mathbb R^+$ such that
\begin{equation}\label{eq:LT suavidad}
m\norm{x}_{-a}\leq\norm{Tx}\leq M \norm{x}_{-a}\hspace{0.3cm}\forall\,\, x\in\X,
\end{equation} then $\R(T^*)= \X_a$ (that is,
$\R(T^*)=\D(L^a)=\R(L^{-a}$)).
\end{lem}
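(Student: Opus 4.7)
The plan is to pass to an auxiliary bounded operator and then translate the information back to $T^*$. Specifically, I would introduce $\tilde S:\X\to\Y$ as the unique continuous extension of $TL^a$ from the dense subspace $\D(L^a)\subset\X$ to all of $\X$. The upper inequality of \eqref{eq:LT suavidad}, applied to $L^a z$ with $z\in\D(L^a)$, reads $\|TL^a z\|\leq M\|z\|$, which guarantees the existence of such an extension together with the estimate $\|\tilde S\|\leq M$. Symmetrically, the lower inequality yields $\|\tilde S z\|\geq m\|z\|$ first on $\D(L^a)$ and then, by density and continuity, on all of $\X$.

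With $\tilde S$ bounded and bounded below, $\tilde S$ is injective with closed range, so the closed range theorem gives $\R(\tilde S^*)=\NU(\tilde S)^\perp=\X$; in other words, $\tilde S^*:\Y\to\X$ is surjective. This is the core analytic input.

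Next I would identify $T^*$ with $L^{-a}\tilde S^*$. For $y\in\Y$ and $x\in\D(L^a)$,
\[
\langle x,\tilde S^*y\rangle_\X=\langle \tilde S x,y\rangle_\Y=\langle TL^a x,y\rangle_\Y=\langle L^a x,T^*y\rangle_\X .
\]
Setting $u\doteq L^a x$, which ranges over all of $\X$ because $L^a$ is a bijection of $\D(L^a)$ onto $\X$ with bounded self-adjoint inverse $L^{-a}$, a direct rearrangement of the identity above produces $\langle u,T^*y\rangle_\X=\langle u,L^{-a}\tilde S^*y\rangle_\X$ for every $u\in\X$, whence $T^*y=L^{-a}\tilde S^*y$ for every $y\in\Y$. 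Combining this with the surjectivity of $\tilde S^*$,
\[
\R(T^*)=L^{-a}\R(\tilde S^*)=L^{-a}\X=\R(L^{-a})=\D(L^a)=\X_a,
\]
where the last identity uses item iii) of Proposition~\ref{prop:8.19}.

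The main delicate point is not an inequality but the careful bookkeeping of the unbounded operator $L^a$: justifying the extension $\tilde S$ via density of $\D(L^a)$, moving $L^a$ across the inner product via the bounded self-adjoint $L^{-a}$, and using surjectivity of $L^a$ onto $\X$ to upgrade a pointwise identity on $\D(L^a)$ to all of $\X$. Beyond the closed range theorem and Proposition~\ref{prop:8.19}, nothing deep enters the argument; in particular, the Heinz inequality is not needed here.
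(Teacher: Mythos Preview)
Your argument is correct. The extension $\tilde S$ of $TL^a$ is well defined and bounded below exactly as you say, the closed range theorem then yields surjectivity of $\tilde S^*$, and the identity $T^*=L^{-a}\tilde S^*$ follows cleanly from the self-adjointness of $L^{-a}$ together with the bijectivity of $L^a:\D(L^a)\to\X$. The bookkeeping you flag (density of $\D(L^a)$, passing $L^a$ across the pairing via the bounded $L^{-a}$) is handled properly.

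As for comparison with the paper: there is nothing to compare, since the paper does not supply a proof of this lemma but merely cites \cite{refthes:Egger-2005}. Your proof is a clean, self-contained argument that could stand in place of that citation.
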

\begin{proof}
See \cite{refthes:Egger-2005}.
\end{proof}
\begin{rem}\label{obs:relacion T y L suavidad}
Note that if (\ref{eq:LT suavidad}) holds, then the operator $T$
is injective. Also note that (\ref{eq:LT suavidad}) essentially
says that the operator $T$ induces a norm on $\X$ which is
equivalent to that inherited by $\X$ from the Hilbert scale of
order $t=-a$, generated by the operator $L$ over $\X$. Hence, it is
reasonable to think, in intuitive terms, that the degree of
regularity induced by $T$ is equivalent to the degree of
regularity induced by $L^{-a}$, and therefore the same happens
with the degree of ill-posedness of their respective inverses.
\end{rem}
\begin{thm}\label{theorem:cota B*B}
Let $T:\X\longrightarrow\Y$ be a linear bounded operator between
the Hilbert spaces $\X$ and $\Y$  and $L$ a linear, densely
defined, self-adjoint, unbounded and strictly positive operator on
$\X$. Let $(\X_t)_{t\in\mathbb R}$ be the Hilbert scale induced by
the operator $L$ over $\X$. Suppose that the operator $T$
satisfies (\ref{eq:LT suavidad}) for some $a>0$ and $0<m\leq
M<\infty$. Given $s>0$ define $B\doteq TL^{-s}$ where $L^{-s}$ is
considered extended to all $\X$ in the sense of Proposition
\ref{prop:8.19} ii). Then for every $\nu\in[0,1]$ we have that
\begin{eqnarray}\label{eq:cota B*B}
m^\nu\norm{x}_{-\nu(a+s)}&\leq \norm{(B^*B)^{\frac{\nu}{2}}x}&\leq
M^\nu \norm{x}_{-\nu(a+s)},\hspace{0.2cm}\forall \,\,x\in\X,\\
M^{-\nu}\norm{x}_{\nu(a+s)}&\leq
\norm{(B^*B)^{-\frac{\nu}{2}}x}&\leq
m^{-\nu}\norm{x}_{\nu(a+s)},\hspace{0.2cm}\forall
\,\,x\in\D((B^*B)^{-\frac{\nu}{2}}).
\end{eqnarray}
Also
\begin{equation}\label{eq:igualdad de rango BestrellaB en teorema}
\R\left((B^*B)^{\frac{\nu}{2}}\right)=\X_{\nu(a+s)}.
\end{equation}
\end{thm}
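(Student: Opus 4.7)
The plan is to establish part (1) first, then deduce part (3), and finally tackle part (2), which is the most delicate. I would begin with $\nu=1$ in part (1) by a direct computation: since $(B^*B)^{1/2}$ is the usual square root of the bounded nonnegative self-adjoint operator $B^*B$, we have $\|(B^*B)^{1/2}x\|=\|Bx\|=\|TL^{-s}x\|$, so applying hypothesis (\ref{eq:LT suavidad}) with $L^{-s}x$ in place of $x$ and using $\|L^{-s}x\|_{-a}=\|L^{-(a+s)}x\|=\|x\|_{-(a+s)}$ (from Proposition \ref{prop:8.19}) immediately yields $m\|x\|_{-(a+s)}\le\|(B^*B)^{1/2}x\|\le M\|x\|_{-(a+s)}$ for every $x\in\X$.

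To pass to arbitrary $\nu\in[0,1]$, I would apply the Heinz inequality (Theorem \ref{teo:desigualdad de Heinz}), in the bounded nonnegative self-adjoint form permitted by the remark following it, to the pair $(B^*B)^{1/2}$ and $L^{-(a+s)}$: raising $\|(B^*B)^{1/2}x\|\le M\|L^{-(a+s)}x\|$ to the $\nu$-th power via Heinz gives $\|(B^*B)^{\nu/2}x\|\le M^\nu\|x\|_{-\nu(a+s)}$, and applying Heinz identically to $\|L^{-(a+s)}x\|\le m^{-1}\|(B^*B)^{1/2}x\|$ yields the matching lower bound. Part (3) is then immediate from Lemma \ref{lem:R(T*)=Xa}, with $T$ there replaced by the bounded self-adjoint operator $(B^*B)^{\nu/2}:\X\to\X$ and with the exponent $a$ of the lemma replaced by $\nu(a+s)$: the hypothesis of that lemma is exactly part (1), and the conclusion $\R(((B^*B)^{\nu/2})^*)=\X_{\nu(a+s)}$ reduces to $\R((B^*B)^{\nu/2})=\X_{\nu(a+s)}$ by self-adjointness.

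The main obstacle is part (2), because a naive substitution $y=(B^*B)^{-\nu/2}x$ into the part (1) estimate produces a norm $\|y\|_{-\nu(a+s)}$ on the wrong side of the inequality, and composing $(B^*B)^{\nu/2}$ with the unbounded $L^{\nu(a+s)}$ is delicate. My approach is an adjoint argument. The bounds of part (1), rewritten via the substitution $x=L^{\nu(a+s)}z$ for $z\in\X_{\nu(a+s)}$, show that the densely defined operator $\bar\Phi_\nu\doteq(B^*B)^{\nu/2}L^{\nu(a+s)}$ satisfies $m^\nu\|z\|\le\|\bar\Phi_\nu z\|\le M^\nu\|z\|$; it therefore extends by density to a bounded linear map on $\X$ with the same bounds, surjective because its range contains $(B^*B)^{\nu/2}(\X)=\X_{\nu(a+s)}$ (by part (3)), which is dense, while being closed by the lower bound. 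A standard calculation using self-adjointness of $(B^*B)^{\nu/2}$ and of $L^{\nu(a+s)}$ then identifies the Hilbert adjoint as $\bar\Phi_\nu^*=L^{\nu(a+s)}(B^*B)^{\nu/2}$ (also bounded on $\X$), which automatically inherits the same two-sided estimate $m^\nu\|y\|\le\|\bar\Phi_\nu^* y\|\le M^\nu\|y\|$. Finally, for $x\in\D((B^*B)^{-\nu/2})=\X_{\nu(a+s)}$ (the domain identification coming from part (3)), setting $y:=(B^*B)^{-\nu/2}x\in\X$ gives $\bar\Phi_\nu^* y=L^{\nu(a+s)}x$, so $\|x\|_{\nu(a+s)}=\|\bar\Phi_\nu^* y\|$ lies between $m^\nu\|(B^*B)^{-\nu/2}x\|$ and $M^\nu\|(B^*B)^{-\nu/2}x\|$, which rearranges exactly to the desired double inequality.
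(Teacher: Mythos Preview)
The paper does not supply its own proof of this theorem; it only cites Egger's thesis and Corollary~8.22 of Engl--Hanke--Neubauer. Your argument is correct and is essentially the standard one found in those references: the $\nu=1$ case of (\ref{eq:cota B*B}) is a direct computation from hypothesis (\ref{eq:LT suavidad}), the passage to general $\nu\in[0,1]$ is exactly an application of the Heinz inequality (in the nonnegative form allowed by the remark following Theorem~\ref{teo:desigualdad de Heinz}) to the bounded pair $(B^*B)^{1/2}$ and $L^{-(a+s)}$, and the range identity (\ref{eq:igualdad de rango BestrellaB en teorema}) is precisely Lemma~\ref{lem:R(T*)=Xa} applied to the self-adjoint operator $(B^*B)^{\nu/2}$.

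Your treatment of the dual bounds (part~(2)) via the adjoint of the auxiliary isomorphism $\bar\Phi_\nu=(B^*B)^{\nu/2}L^{\nu(a+s)}$ is a clean and valid way to flip the inequalities; the key identifications you use---that $(B^*B)^{\nu/2}$ maps $\X$ onto $\X_{\nu(a+s)}$ (so that $L^{\nu(a+s)}(B^*B)^{\nu/2}$ is everywhere defined and equals $\bar\Phi_\nu^*$) and that $\D((B^*B)^{-\nu/2})=\X_{\nu(a+s)}$---all follow from part~(3), which you have already established. An equally standard alternative, closer in spirit to the duality argument the paper itself uses later in Lemma~\ref{coro:dominios}~\textit{iii)}, is to first obtain the $\nu=1$ case of (2) from the $\nu=1$ case of (\ref{eq:igualdad de rango BestrellaB en teorema}) and then apply Heinz once more to the unbounded pair $(B^*B)^{-1/2}$ and $L^{a+s}$; your route avoids having to verify the Heinz hypotheses in the unbounded setting a second time.
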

\begin{proof}
See \cite{refthes:Egger-2005} (see also
\cite{refb:Engl-Hanke-Neubauer-1996} Corollary 8.22, page 214).
\end{proof}
\begin{rem}
If the operators  $L^{-1}$ y $T^*T$ commute, then
(\ref{eq:igualdad de rango BestrellaB en teorema}) remains valid
also for $\nu>1$. This result, which we will prove later on
(Theorem \ref{theorem:extension theorem B*B}), will be of
fundamental importance in the extension of some results on
convergence of some regularization methods in Hilbert scales,
which will be presented in Section 3.
\end{rem}
The inequalities in (\ref{eq:cota B*B}) can be interpreted in a
similar way as it was done for (\ref{eq:LT suavidad}) in the
Remark \ref{obs:relacion T y L suavidad}. In fact, taking as
``unit of regularity" the degree induced by the operator $L^{-1}$,
the respective degrees of regularity induced by $L^{-s}$ and $T$
are $s$ and $a$, respectively. Hence the degree induced by
$B=TL^{-s}$ is $a+s$, the degree induced by $B^*B$ is $2(a+s)$
and, therefore, the degree of regularity induced by
$(B^*B)^{\frac{\nu}{2}}$ is $\frac{\nu}{2}2(a+s)=\nu(a+s)$.

The idea of using Hilbert scales for regularizing inverse
ill-posed problems was first introduced by Natterer in 1984
(\cite{ref:Natterer-1984-AA}) for the special case of the
classical Tikhonov-Phillips method. In his work Natterer
regularized the problem $Tx=y$ by minimizing the functional
\begin{equation}\label{eq:Tik-EH}
\norm{Tx-y^{\scriptscriptstyle \delta}}^2+\alpha\norm{x}_{\scriptscriptstyle s}^2,
\end{equation}
over the space $\X_{\scriptscriptstyle s}$, where
$\norm{\cdot}_{\scriptscriptstyle s}$ denotes the corresponding norm in
the Hilbert scale (see Definition \ref{def:escalas de Hilbert}).

In certain cases it is possible that a value of $s_{\ce}>0$ be
known for which we are absolutely sure that the exact solution
$x_{\ce}^\dag\in\X_{s_{\ce}}$, where $(\X_t)_{t\in\mathbb R}$ is
the Hilbert scale induced by the operator $L$ over $\X$. In such
cases it is possible to proceed with regularization of the problem
$Tx=y$ by means of the traditional methods, by replacing the
Hilbert space $\X$ by $\X_{s_{\ce}}$ and, obviously $T$ by its
restriction to $\D(L^{s_{\ce}})$. In other cases, however, it is
possible that such a value of $s_\ce$  be not exactly known,
although it could be reasonable to assume the existence of some
$u>0$ for which
\begin{equation}\label{eq:cond fuente xmas}
x^\dag\in\X_u,
\end{equation}
(although the exact value of $u$ be unknown). It is precisely in
this case in which Hilbert scales provide a solid mathematical
framework for the development of convergent regularization methods
which allow us to take advantage, in a optimal and ``adaptive"
way, of the source condition (\ref{eq:cond fuente xmas}) in order
to obtain the best possible convergence speed, even though $u$ is
unknown.

The first result about convergence on Hilbert scales is due to F.
Natterer (\cite{ref:Natterer-1984-AA}) and is presented in the
next theorem.
\begin{thm}\label{teo:ordenes de convergencia}
Let $T\in\mathcal L(\X,\Y)$ with $\X$ and $\Y$ Hilbert spaces,
$T^\dag$ the Moore-Penrose generalized inverse of $T$,
$L:\D(L)\subset\X\longrightarrow\X$ a linear, densely defined,
self-adjoint, unbounded operator with $L\geq\gamma$ for some
$\gamma>0$ and $(\X_t)_{t\in\mathbb R}$ the Hilbert scale induced
by $L$ over $\X$. Suppose also that condition (\ref{eq:LT
suavidad}) holds. Let $s\geq 0$ and $B= TL^{-s}$, as in Theorem
\ref{theorem:cota B*B}. Let $g_\alpha:[0,\norm{B}^2]\rightarrow
\mathbb R,\, \alpha>0,$ be a family of piecewise continuous
functions and  $r_\alpha(\lambda)\doteq 1-\lambda
g_\alpha(\lambda)$. Suppose also that $\{g_\alpha\}$ satisfies the
following conditions:
\begin{eqnarray}
\text{C1}&:&\forall \,\,\lambda \in(0,\norm{B}^2] \text{ we have that }\lim_{\alpha\rightarrow 0^{\scriptscriptstyle
+}}g_\alpha(\lambda)=\frac{1}{\lambda};\label{eq:limgalpha}\\
\text{C2}&: & \exists \,\,\hat c>0 \text{ such that }\forall \,\,
\lambda \in(0,\norm{B}^2] \text{ and } \forall\,\,\alpha>0 \text{
there holds } \abs{g_\alpha(\lambda)}\leq \hat
c\alpha^{-1};\label{eq:cotagalpha}\\
\text{C3}&: & \exists \,\,\mu_{\scriptscriptstyle 0}\geq 1 \text{
such that if }\mu\in[0,\mu_{\scriptscriptstyle 0}]\text{ then
}\lambda^\mu\abs{r_\alpha(\lambda)} \leq
c_\mu\alpha^\mu\hspace{0.2cm} \forall \,\,\lambda
\in(0,\norm{B}^2],\label{eq:calif}
\end{eqnarray}
where $c_\mu$ is a positive constant.

For $y\in\D(T^\dag)$, $y^\delta\in\Y$ with $\norm{y-y^\delta}\leq
\delta$ we define the regularized solution of the problem
$Tx=y^\delta$ by
\begin{equation}\label{eq:reg esc Hilbert}
x_\alpha ^\delta\doteq R_\alpha y^\delta \doteq L^{-s}g_\alpha(B^*B)B^*y^\delta.
\end{equation}
 Suppose that $x^\dag=T^\dag y \in \X_u$ for some $u\in [0,a+2s]$
 and that the regularization parameter $\alpha$ is chosen as
\begin{equation}\label{eq:elecalpha}
\alpha\doteq c\left(
\frac{\delta}{\norm{x^\dag}_u}\right)^{\frac{2(a+s)}{a+u}},
\end{equation}
where $c$ is a positive constant and  $a$ is the constant in
(\ref{eq:LT suavidad}). Then there exists a constant $C$ (which
depends on $a$ and $s$ but not on $u$) such that the following
estimate for the total error holds:
\begin{equation}\label{eq:orden de conv theorem 8.23}
\norm{x_\alpha^\delta-x^\dag}\leq
C\,\norm{x^\dag}_u^{\frac{u}{a+u}} \delta^{\frac{u}{a+u}}.
\end{equation}
\end{thm}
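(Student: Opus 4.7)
The plan is to split $\norm{x_\alpha^\delta-x^\dag}\leq \norm{x_\alpha^\delta-x_\alpha}+\norm{x_\alpha-x^\dag}$ with $x_\alpha\doteq R_\alpha y$ the noise-free regularized solution, bound each summand separately, and then optimize over $\alpha$. The noise term will be controlled purely by spectral properties of $B^*B$ combined with conditions C2 and C3 on $g_\alpha$; the bias term will be controlled by the source condition $x^\dag\in\X_u$ combined with the range identity (\ref{eq:igualdad de rango BestrellaB en teorema}) and C3. The prescribed a-priori choice (\ref{eq:elecalpha}) is exactly the minimizer of the resulting sum.

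\textbf{Noise propagation.} From $x_\alpha^\delta-x_\alpha=L^{-s}g_\alpha(B^*B)B^*(y^\delta-y)$ and $\norm{L^{-s}v}=\norm{v}_{-s}$, the lower bound in (\ref{eq:cota B*B}) applied with $\nu=s/(a+s)$ reduces the estimate to
$$\norm{x_\alpha^\delta-x_\alpha}\leq m^{-s/(a+s)}\norm{(B^*B)^{s/(2(a+s))}g_\alpha(B^*B)B^*}\,\delta.$$
The polar decomposition of $B$ bounds this operator norm by $\sup_\mu \mu^\rho\abs{g_\alpha(\mu)}$ with $\rho=(a+2s)/(2(a+s))\in[1/2,1]$. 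Condition C2 gives $\abs{g_\alpha(\mu)}\leq \hat c/\alpha$ and C3 taken with exponent $0$ gives $\abs{\mu g_\alpha(\mu)}\leq 1+c_{\scriptscriptstyle 0}$; multiplying $\abs{g_\alpha(\mu)}^{1-\rho}\cdot\abs{\mu g_\alpha(\mu)}^\rho$ yields the bound $C\alpha^{\rho-1}=C\alpha^{-a/(2(a+s))}$, whence $\norm{x_\alpha^\delta-x_\alpha}\leq C_1\,\delta\,\alpha^{-a/(2(a+s))}$.

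\textbf{Bias.} Substituting $y=Tx^\dag$, writing $T=BL^s$ through the extensions of Proposition \ref{prop:8.19}, and using the functional-calculus identity $g_\alpha(B^*B)B^*B=I-r_\alpha(B^*B)$ gives the formal identity $x^\dag-x_\alpha=L^{-s}r_\alpha(B^*B)L^s x^\dag$. Setting $z\doteq L^sx^\dag\in\X_{u-s}$ and applying (\ref{eq:cota B*B}) once more with $\nu=s/(a+s)$,
$$\norm{x^\dag-x_\alpha}=\norm{r_\alpha(B^*B)z}_{-s}\leq m^{-s/(a+s)}\norm{(B^*B)^{s/(2(a+s))}r_\alpha(B^*B)z}.$$
For $u\in[s,a+2s]$ the exponent $(u-s)/(a+s)$ lies in $[0,1]$, so (\ref{eq:igualdad de rango BestrellaB en teorema}) supplies a representation $z=(B^*B)^{(u-s)/(2(a+s))}v$ with $\norm{v}\leq m^{-(u-s)/(a+s)}\norm{x^\dag}_u$; combining the powers of $B^*B$ and invoking C3 with exponent $u/(2(a+s))\leq 1\leq \mu_{\scriptscriptstyle 0}$ yields $\norm{x^\dag-x_\alpha}\leq C_2\,\alpha^{u/(2(a+s))}\norm{x^\dag}_u$. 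For $u\in[0,s)$, where $z$ is not an element of $\X$, the same bound is to be recovered via the interpolation inequality (\ref{eq:des-interp-Esc Hilb}) applied to $e_\alpha\doteq x^\dag-x_\alpha$ with nodes $-a<0\leq u$: the factor $\norm{e_\alpha}_{-a}$ is estimated from (\ref{eq:LT suavidad}) using $Te_\alpha=-r_\alpha(BB^*)y$ and the commutation $r_\alpha(BB^*)B=B\,r_\alpha(B^*B)$, while $\norm{e_\alpha}_u$ is handled by a direct spectral calculation in $\X$.

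\textbf{Assembly and main obstacle.} Adding both bounds,
$$\norm{x_\alpha^\delta-x^\dag}\leq C_1\delta\,\alpha^{-a/(2(a+s))}+C_2\,\alpha^{u/(2(a+s))}\norm{x^\dag}_u,$$
and substituting the choice (\ref{eq:elecalpha}) balances the two contributions, producing the order (\ref{eq:orden de conv theorem 8.23}) with $C$ depending on $a,s,c,m,M,\hat c,c_{\scriptscriptstyle 0},c_\mu$ but not on $u$. The main technical difficulty I anticipate lies precisely in the rigorous treatment of the identity $x^\dag-x_\alpha=L^{-s}r_\alpha(B^*B)L^sx^\dag$ when $u<s$ and $L^sx^\dag\notin\X$: the clean spectral chain used for $u\geq s$ breaks down, so one must either extend $r_\alpha(B^*B)$ to the negative-index space $\X_{u-s}$ through the isomorphism statements of Theorem \ref{theorem:cota B*B} or, as sketched above, fall back on the interpolation inequality (\ref{eq:des-interp-Esc Hilb}) to cover the full range $u\in[0,a+2s]$ uniformly.
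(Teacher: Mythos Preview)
Your proposal is correct and follows the same route as the argument given in the paper (the paper cites Engl--Hanke--Neubauer for Theorem~\ref{teo:ordenes de convergencia} itself, but reproduces the full estimates in the proof of Theorem~\ref{coro:coro de ordenes de convergencia}): split into noise and bias, pass from $\norm{\cdot}_{-s}$ to $\norm{(B^*B)^{s/(2(a+s))}\cdot}$ via (\ref{eq:cota B*B}), control the noise term through the interpolated bound $\lambda^\beta|g_\alpha(\lambda)|\leq k\alpha^{\beta-1}$, and control the bias via the factorization $L^sx^\dag=(B^*B)^{(u-s)/(2(a+s))}v$ together with C3.

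The only point where you diverge is the case $u<s$, which you flag as the main obstacle and propose to treat by a separate interpolation argument with nodes $-a<0\le u$. The paper's solution is simpler and avoids this detour entirely: it observes that the \emph{same} factorization $L^sx^\dag=(B^*B)^{(u-s)/(2(a+s))}v$ remains valid for $u<s$, with $v\doteq (B^*B)^{(s-u)/(2(a+s))}L^sx^\dag$. This makes sense because, by the norm equivalence (\ref{eq:cota B*B}) with $\nu=(s-u)/(a+s)\in(0,1]$, the positive fractional power $(B^*B)^{(s-u)/(2(a+s))}$ extends boundedly from $\X$ (dense in $\X_{u-s}$) to all of $\X_{u-s}\ni L^sx^\dag$, and lands in $\X$. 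With this observation the chain of inequalities for the bias runs through uniformly in $u\in[0,a+2s]$, and your interpolation fallback---which, as sketched, would still need a careful estimate of $\norm{e_\alpha}_u$ that you leave unspecified---becomes unnecessary. This is in fact the first of the two options you mention (``extend\ldots through the isomorphism statements of Theorem~\ref{theorem:cota B*B}''), applied to the fractional power rather than to $r_\alpha(B^*B)$.
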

\begin{proof}
See \cite{refb:Engl-Hanke-Neubauer-1996}, Theorem 8.23.
\end{proof}

In Figure \ref{fig:elecciones de s} the relation among the values
of the parameters $s$ and $u$ of Theorem \ref{teo:ordenes de
convergencia} is schematized. Observe that the largest possible
value for $s$ is $\frac{u-a}{2}$. The arrow indicates the space
$\X_s$ may or may not be contained in $\X_u$. The dashed curve
represents the space $\X_u$ indicating that the parameter $u$ is
unknown. \vspace{0.2cm}

\begin{figure}[H]
\begin{pspicture}(-6.5,-2)(3,2)
\psellipse(0,0)(3,2)
\psellipse[linestyle=dashed,dotsep=1pt,linewidth=0.5pt](0,0)(2.1,1.6)
\rput[t](1.4,1.6){$\X_u$}
\rput[t](2,2){$\X_{\frac{u-a}{2}}$}
\psline{<->}(1.3,0)(3,0)
\rput[t](2.5,0.4){$\X_s$}
\end{pspicture}
\caption{The Hilbert scales in Theorem \ref{teo:ordenes de
convergencia}.}\label{fig:elecciones de s}
\end{figure}
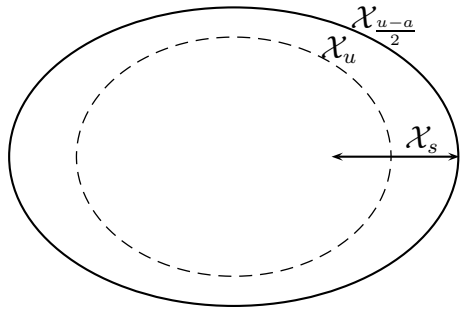

\begin{rem}
It is very important to point out the ``adaptivity" of the order
of convergence in Theorem \ref{teo:ordenes de convergencia}. In
fact, note that although the regularized solutions
$x_\alpha^\delta$ defined in (\ref{eq:reg esc Hilbert}) do not
depend on the degree of regularity $u$ of $x^\dag$, the order of
convergence obtained does depend on $u$. This order improves as
$u$ increases and it becomes asymptotically optimal in $u$. Also
observe that in order to assure the order of convergence in
(\ref{eq:orden de conv theorem 8.23}) it is necessary to choose
$s$ (note that $R_\alpha$ depends on $s$) such that $u\leq a+2s$.
Since it is possible that $u$ be unknown, it may happen that we
may not be completely sure of the validity of such constraint.
Note that in such a case, i.e. if $u>a+2s$, an order of
convergence $O\left(\delta^{\frac{u}{a+u}}\right)$ cannot be
guaranteed for the total error. However, since
$\X_u\subset\X_\eta\hspace{0.2cm}\forall \,u\geq\eta$, in such
circumstances we will still obtain at least convergence of the
order
$O\left(\delta^{\frac{a+2su}{a+(a+2s)}}\right)=O\left(\delta^{\frac{a+2s}{2(a+s)}}\right)$.
Thus, not choosing $s$ sufficiently large will result in a worse
order of convergence.
\end{rem}

\section{Preliminary convergence results in Hilbert scales}\label{sec:2}
In the next theorem, which extends the results of Theorem
\ref{teo:ordenes de convergencia}, we will show that convergence
can be obtained when the parameter choice rule $\alpha$ is chosen
in the form $\alpha=c\,\delta^\varepsilon$  for all values of
$\varepsilon$ in a certain interval, and not only for
$\varepsilon=\frac{2(a+s)}{a+u}$, corresponding to the choice in
(\ref{eq:elecalpha}). We will prove however that for this choice
of $\varepsilon$ the order of convergence is optimal.

\begin{thm}\label{coro:coro de ordenes de convergencia}
Let $\X$, $\Y$, $T$, $T^\dag$, $L$, $(\X_t)_{t\in\mathbb R}$,
$s\geq 0$, $a>0$, $B=TL^{-s}$, $g_\alpha,$ $r_\alpha$,
$R_\alpha=L^{-s}g_\alpha(B^*B)B^*$, $y\in\D(T^\dag)$, $y^\delta
\in\Y$, $\|y-y^\delta\|\le\delta$, $u\in[0,a+2s],$ $x^\dag=T^\dag
y\in\X_u$, $x_\alpha=R_\alpha y$ y $x_\alpha^\delta=R_\alpha
y^\delta,$ all as in Theorem \ref{teo:ordenes de convergencia}. If
the parameter choice rule $\alpha$ is chosen as
$\alpha=c\,\delta^\varepsilon$ ($c$ constant) and $\varepsilon
\in\left(0,\frac{2(a+s)}{a}\right)$ then:
\begin{enumerate}
\item [i)] As $\delta\rightarrow 0^{\s+}$, $x_\alpha^\delta\rightarrow x^\dag$ in $\X$.
\item [ii)] Moreover, $\norm{x_\alpha^\delta-x^\dag}=\mathcal
O(\delta^\sigma)$ where
$\sigma=\min\left\{1-\frac{a\varepsilon}{2(a+s)},\frac{u\varepsilon}{2(a+s)}\right\}>0$.
\item [iii)] The order of convergence for the total error is optimal when $\varepsilon$
is chosen as $\varepsilon=\frac{2(a+s)}{a+u},$ in which case
$\norm{x_\alpha^\delta-x^\dag}=\mathcal
O(\delta^{\frac{u}{a+u}})$.
\end{enumerate}
\end{thm}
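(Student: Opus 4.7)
The plan is to follow the classical bias--noise decomposition used in Natterer's Theorem \ref{teo:ordenes de convergencia} and then re-parameterize in terms of $\varepsilon$. Writing $x_\alpha = R_\alpha y$, I split
\[
\norm{x_\alpha^\delta - x^\dag} \le \norm{x_\alpha^\delta - x_\alpha} + \norm{x_\alpha - x^\dag},
\]
and aim at the two Hilbert-scale estimates
\[
\norm{x_\alpha^\delta - x_\alpha} \le C_1\,\delta\,\alpha^{-\frac{a}{2(a+s)}}, \qquad \norm{x_\alpha - x^\dag} \le C_2\,\norm{x^\dag}_u\,\alpha^{\frac{u}{2(a+s)}}.
\]

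For the noise term, I would use $R_\alpha(y^\delta - y) = L^{-s} g_\alpha(B^*B) B^*(y^\delta-y)$, the identity $\norm{L^{-s}w}=\norm{w}_{-s}$, and Theorem \ref{theorem:cota B*B} with $\nu = s/(a+s)\in[0,1]$ to reduce the bound to the spectral estimate $\sup_{\lambda\in(0,\norm{B}^2]}\lambda^{(a+2s)/(2(a+s))}\abs{g_\alpha(\lambda)} \le C\,\alpha^{-a/(2(a+s))}$. This last inequality follows from C2 and C3 applied with $\mu=0$ (so that $\abs{1-\lambda g_\alpha(\lambda)}=\abs{r_\alpha(\lambda)}$ is uniformly bounded) through the elementary interpolation $\lambda^\mu g_\alpha = (\lambda g_\alpha)^\mu g_\alpha^{1-\mu}$ with $\mu=(a+2s)/(2(a+s))\in[0,1]$.

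For the bias term, the functional-calculus identity $x_\alpha - x^\dag = -L^{-s}r_\alpha(B^*B) L^s x^\dag$ (interpreted via the extensions of $L^{\pm s}$ from Proposition \ref{prop:8.19}) together with two applications of Theorem \ref{theorem:cota B*B} --- one absorbing the outer $L^{-s}$ and one translating the source condition $\norm{L^s x^\dag}_{u-s}=\norm{x^\dag}_u$ into a spectral bound in $B^*B$ --- reduces matters to C3 with $\mu=u/(2(a+s))$. The hypothesis $u\in[0,a+2s]$ ensures that $\mu\le\mu_0$ and that Theorem \ref{theorem:cota B*B} can be applied in both directions (with the appropriate sign of $\nu$).

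Substituting $\alpha=c\,\delta^\varepsilon$ in the two bounds yields
\[
\norm{x_\alpha^\delta-x^\dag} \le C\,\delta^{1-a\varepsilon/(2(a+s))} + C'\norm{x^\dag}_u\,\delta^{u\varepsilon/(2(a+s))} \le C''\delta^\sigma,
\]
which is (ii); the admissibility $\varepsilon\in(0,2(a+s)/a)$ is precisely the condition that the noise exponent is strictly positive, and when $u>0$ this already yields (i). For (i) when $u=0$ I would use C1 combined with the uniform operator bound on $R_\alpha T$ (obtained from C3 at $\mu=0$) and a density argument on $\R(T^\dag)$ to obtain $x_\alpha\to x^\dag$, and then combine with the vanishing of the noise term. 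Finally, (iii) is an elementary max--min calculation: the two exponents in $\sigma$ are affine in $\varepsilon$, respectively strictly decreasing and strictly increasing, so their minimum is maximized at their unique intersection $\varepsilon=2(a+s)/(a+u)$, at which value both exponents equal $u/(a+u)$. The main technical obstacle is making sense of $L^s x^\dag$ when $u<s$: the composition $L^{-s}r_\alpha(B^*B)L^s$ must be interpreted via functional calculus on a suitable extension so that Theorem \ref{theorem:cota B*B} can be invoked in both directions to bridge between $\norm{\cdot}_u$ and $\norm{(B^*B)^{u/(2(a+s))}\cdot}$; this point is handled in the proof of Theorem \ref{teo:ordenes de convergencia} in \cite{refb:Engl-Hanke-Neubauer-1996}, which I would adapt rather than rederive.
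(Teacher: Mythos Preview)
Your proposal is correct and follows essentially the same route as the paper: the same bias--noise split, the same two target estimates $\norm{x_\alpha^\delta-x_\alpha}\le C_1\delta\alpha^{-a/(2(a+s))}$ and $\norm{x_\alpha-x^\dag}\le C_2\norm{x^\dag}_u\,\alpha^{u/(2(a+s))}$, the same application of Theorem~\ref{theorem:cota B*B} with $\nu=s/(a+s)$ to pass from $\norm{\cdot}_{-s}$ to $\norm{\cdot}$, the same interpolated spectral bound $\lambda^\beta|g_\alpha(\lambda)|\le k\alpha^{\beta-1}$ (the paper records this as an explicit preliminary inequality with $k=\max\{1+c_0,\hat c\}$), and the same balancing of exponents for \textit{iii)}. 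Two small differences worth noting: for the case $u<s$ you defer to the reference, whereas the paper resolves it concretely by observing that $L^s x^\dag=(B^*B)^{(u-s)/(2(a+s))}v$ still holds with the explicit choice $v\doteq (B^*B)^{(s-u)/(2(a+s))}L^s x^\dag$; conversely, your remark that \textit{i)} for $u=0$ requires a separate density argument using C1 is more careful than the paper, which simply asserts ``This proves \textit{i)} and \textit{ii)}'' even though the bias bound degenerates to a mere uniform bound when $u=0$.
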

\begin{proof}
First note that from conditions (\ref{eq:cotagalpha}) and
(\ref{eq:calif}) it follows immediately that there exists a
constant $k>0$ such that
\begin{equation}\label{eq:cota galpha}
\lambda^\beta |g_\alpha(\lambda)|\leq
k\alpha^{\beta-1},\hspace{0.5cm} \forall\,\beta\in[0,1],\,
\forall\, \alpha>0\, \text{ and } \,\forall\, \lambda \in
(0,\norm{B}^2],
\end{equation}
(we can take $k=\text{max}\{1+c_{\s 0},\hat{c}\}$ where $c_{\s 0}$
is the constant $c_\mu$ in (\ref{eq:calif}) corresponding to
$\mu=0$).

We will now proceed to estimate the error due to noise in the data
and the regularization error, separately. Without loss of
generality we will suppose that $y\in\R(T)$ (otherwise we replace
$y$ by $Q\,y$ where
$Q:\Y\overset{\perp}{\longrightarrow}\overline{\R(T)}$; recall
that $y\in\D(T^\dag)$ and $T^\dag y=T^\dag\,Q\,y$).

For the error due to noise we have:
\begin{align*}
\norm{x_\alpha^\delta -x_\alpha}&= \norm{R_\alpha(y^\delta -y)}\\
&= \norm{L^{-s}g_\alpha(B^*B)B^*(y^\delta-y)} \\
&=\norm{g_\alpha(B^*B)B^*(y^\delta-y)}_{-s} \\
&\leq
m^{-\frac{s}{(a+s)}}\norm{(B^*B)^{\frac{s}{2(a+s)}}g_\alpha(B^*B)B^*(y^\delta-y)}\qquad\qquad
\left(\text{by }
(\ref{eq:cota B*B}) \text{ with } \nu\doteq\frac{s}{a+s}\right)\\
&= m^{-\frac{s}{(a+s)}}\norm{(B^*B)^{-1/2}(B^*B)^{\frac{a+2s}{2(a+s)}}g_\alpha(B^*B)B^*(y^\delta-y)}\\
&= m^{-\frac{s}{(a+s)}}\norm{(B^*B)^{\frac{a+2s}{2(a+s)}}g_\alpha(B^*B)(B^*B)^{-1/2}B^*(y^\delta-y)} \\
&\leq m^{-\frac{s}{(a+s)}}\, k
\alpha^{\frac{-a}{2(a+s)}}\norm{(B^*B)^{-1/2}B^*(y^\delta-y)}
\qquad\qquad
\left(\text{by (\ref{eq:cota galpha}) with }\beta\doteq\frac{a+2s}{2a+2s}\right)\\
&\leq
c_1\delta\alpha^{\frac{-a}{2(a+s)}},\qquad\qquad\qquad\qquad\qquad\qquad\qquad\left(\text{since
}\norm{(B^*B)^{-1/2}z}=\norm{(B^*)^{-1}z}\right)
\end{align*}
where $C_1=k\,m^{-\frac{s}{(a+s)}}$. Therefore
\begin{equation}\label{eq:cota ruido}
\norm{x_\alpha^\delta -x_\alpha}\leq C_1\delta\alpha^{\frac{-a}{2(a+s)}}.
\end{equation}
At this point it is timely to note that the estimate for the error
due to noise in (\ref{eq:cota ruido}) is independent of the degree
of regularity $u$ of the solution $x^\dag$.

Next we proceed to estimate the regularization error
$\norm{x_\alpha-x^\dag }$. Note in first place that from
Proposition \ref{prop:8.19} \textit{ii)} (with $t=u$ and $s=u-s$),
it follows that $L^{u-(u-s)}=L^s$ has a unique extension to $\X_u$
which is an isomorphism from $\X_u$ onto $\X_{u-s}$. It is
important to point out here that it is precisely this property of
the fractional powers of the operator $L$ on the Hilbert scales
induced by itself, what will allow us, in the end, to arrive to
the adaptive convergence order that we want to prove. More
precisely, note that whatever the value of $u$ (perhaps unknown),
$L^s$ always possesses a unique extension to $\X_u$. This
extension, also denoted with $L^s$, regarded as an operator on
$\X_{u-s}$ with domain $\X_u$, is self-adjoint and strictly
positive if $u> u-s$. Then, since $x^\dag\in\X_u$, it follows that
\begin{equation}\label{eq:6 teo 8.23}
L^sx^\dag\in\X_{u-s}.
\end{equation}
\noindent On the other hand, if $u\geq s$, from Theorem
$\ref{theorem:cota B*B}$ con $\nu\doteq\frac{u-s}{a+s}$ it follows
that
\begin{equation}\label{eq:7 teo 8.23}
\X_{u-s}=\R\left((B^*B)^{\frac{u-s}{2(a+s)}}\right).
\end{equation}
\noindent From (\ref{eq:6 teo 8.23}) and (\ref{eq:7 teo 8.23}) it
follows that there exist $v\in\X$ such that
\begin{equation}\label{eq:8 teo 8.23}
L^sx^\dag=(B^*B)^{\frac{u-s}{2(a+s)}}v.
\end{equation}
\noindent If  $u<s$ then (\ref{eq:8 teo 8.23}) holds with $v\doteq
(B^*B)^{\frac{s-u}{2(a+s)}}L^sx^\dag.$

Then,
\begin{align*}
\norm{x_\alpha-x^\dag}&= \norm{R_\alpha y-x^\dag}&\\
&= \norm{L^{-s}g_\alpha(B^*B)B^*y-x^\dag}&\\
&= \norm{L^{-s}g_\alpha(B^*B)B^*BL^s x^\dag-x^\dag}& (\text{since }B^*y=B^*BL^s x^\dag)\\
&= \norm{L^{-s}g_\alpha(B^*B)B^*BL^s x^\dag-L^{-s}L^sx^\dag}& \\
&= \norm{L^{-s}[g_\alpha(B^*B)B^*B-I]L^sx^\dag}&\\
&= \norm{L^{-s}r_\alpha(B^*B)L^sx^\dag}& \\
&=\norm{L^{-s}r_\alpha(B^*B)(B^*B)^{\frac{u-s}{2(a+s)}}v}& (\text{by (\ref{eq:8 teo 8.23})})\\
&=\norm{r_\alpha(B^*B)(B^*B)^{\frac{u-s}{2(a+s)}}v}_{-s}&  \\
&=\norm{(B^*B)^{\frac{u-s}{2(a+s)}}r_\alpha(B^*B)v}_{-s}&\\
&\leq m^{-\frac{s}{(a+s)}} \norm{(B^*B)^{\frac{s}{2(a+s)}}(B^*B)^{\frac{u-s}{2(a+s)}}r_\alpha(B^*B)v} &
\left(\text{by (\ref{eq:cota B*B}) with }\nu\doteq\frac{s}{a+s}\right)\\
&= m^{-\frac{s}{(a+s)}} \norm{(B^*B)^{\frac{u}{2(a+s)}}r_\alpha(B^*B)v}\\
&\leq m^{-\frac{s}{(a+s)}} \,c_{\bar{\mu}}\alpha^{\frac{u}{2(a+s)}}\norm{v}& \left(\text{by (\ref{eq:calif}) with }\bar{\mu}\doteq\frac{u}{2(a+s)}\right)\\
&= m^{-\frac{s}{(a+s)}}\,c_{\bar{\mu}}\alpha^{\frac{u}{2(a+s)}}\norm{(B^*B)^{\frac{s-u}{2(a+s)}}L^sx^\dag} &(\text{by (\ref{eq:8 teo 8.23})})\\
&\leq m^{-\frac{s}{(a+s)}}\,c_{\bar{\mu}}\,\alpha^{\frac{u}{2(a+s)}}\, M^{\frac{s-u}{a+s}}\norm{L^sx^\dag}_{u-s}& \left(\text{by (\ref{eq:cota B*B}) with }\, \nu\doteq\frac{s-u}{a+s}\right)\\
&= m^{-\frac{s}{(a+s)}}\,c_{\bar{\mu}}\,
M^{\frac{s-u}{a+s}}\alpha^{\frac{u}{2(a+s)}}\norm{x^\dag}_u.&
\end{align*}
Hence, there exists  $C_2\doteq
m^{-\frac{s}{(a+s)}}\,c_{\bar{\mu}}\, M^{\frac{s-u}{a+s}} $ such
that
\begin{equation}\label{eq:cota reg}
\norm{x_\alpha-x^\dag}\leq C_2\norm{x^\dag}_u \alpha^{\frac{u}{2(a+s)}}.
\end{equation}
Note that this estimate for the regularization error depends on
the degree of regularity $u$ of $x^\dag$ and it is relevant only
for the case $u>0$.

Finally, from (\ref{eq:cota ruido}) and (\ref{eq:cota reg}) it
follows that
\begin{align}
\norm{x_\alpha^\delta-x^\dag}&\leq\norm{x_\alpha^\delta-x_\alpha}+ \norm{x_\alpha-x^\dag} &\nonumber\\
&\leq C_1\delta\alpha^{\frac{-a}{2(a+s)}}+ C_2\alpha^{\frac{u}{2(a+s)}}\norm{x^\dag}_u & \hspace{-1cm}\nonumber\\
&= C_1\delta\left(c\delta^{\varepsilon}\right)^{\frac{-a}{2(a+s)}}
+C_2\left(c\delta^{\varepsilon}\right)^{\frac{u}{2(a+s)}}\norm{x^\dag}_u &\nonumber\\
&= C_1c^{\frac{-a}{2(a+s)}}\delta^{1-\frac{\varepsilon a}{2(a+s)}}
+C_2c^{\frac{u}{2(a+s)}}\norm{x^\dag}_u\delta^{\frac{\varepsilon u}{2(a+s)}} &\label{eq:1 coro ordenes}\\
&=\mathcal O(\delta^\sigma),\nonumber
\end{align}
where
$\sigma=\min\left\{1-\frac{a\varepsilon}{2(a+s)},\frac{u\varepsilon}{2(a+s)}\right\}$.
This proves \textit{i)} and \textit{ii)}.

To prove \textit{iii)}, note that by virtue of (\ref{eq:1 coro
ordenes}) it follows that the order of convergence is optimal when
$\varepsilon$ is chosen such that
$$1-\frac{a\varepsilon}{2(a+s)}= \frac{\varepsilon
u}{2(a+s)},$$
that is for $\varepsilon =\frac{2(a+s)}{a+u}$, in which case
$\sigma=\frac{u}{a+u}$. It is important to note here that this
optimal order of convergence depends on $a$ and  $u$ (that is on
$L,\,T$ and $x^\dag$) but it does not depend on the choice of $s$.
\end{proof}

In the next theorem we will prove that with the same parameter
choice rule as in (\ref{eq:elecalpha}), it is possible to obtain a
better order of convergence in a weaker norm or convergence in a
stronger norm with a worse order.
\begin{thm}\label{teo:convergencia normas intermedias}
Let $\X$, $\Y$, $T$, $T^\dag$, $L$, $(\X_t)_{t\in\mathbb R}$,
$s\geq 0$, $a>0$, $B=TL^{-s}$, $g_\alpha,$ $r_\alpha$,
$R_\alpha=L^{-s}g_\alpha(B^*B)B^*$, $y\in\D(T^\dag)$, $y^\delta
\in\Y$, $\|y-y^\delta\|\le\delta$, $u\in[0,a+2s],$ $x^\dag=T^\dag
y\in\X_u$, $x_\alpha=R_\alpha y$ y $x_\alpha^\delta=R_\alpha
y^\delta,$ all as in Theorem \ref{teo:ordenes de convergencia}.
Suppose that the parameter choice rule $\alpha$ is chosen as in
(\ref{eq:elecalpha}), that is
\begin{equation}\label{eq:elecalpha normas r}
\alpha=c\left(
\frac{\delta}{\norm{x^\dag}_u}\right)^{\frac{2(a+s)}{a+u}},
\end{equation}
where $c>0$. Then for every $r\in[-a,\min\{u,s\}]$ there holds
\begin{equation}\label{eq:orden de conv theorem normas r}
\norm{x_\alpha^\delta-x^\dag}_r\leq
C \, \norm{x^\dag}_u^{\frac{a+r}{a+u}}\delta^{\frac{u-r}{a+u}},
\end{equation}
where $C$ is a constant depending on $a$, $s$ and $r$ but not on
$u$ nor on $x^\dag$.
\end{thm}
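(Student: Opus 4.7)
The argument follows the pattern of Theorem \ref{coro:coro de ordenes de convergencia}: split
$$\norm{x_\alpha^\delta-x^\dag}_r \leq \norm{x_\alpha^\delta-x_\alpha}_r + \norm{x_\alpha-x^\dag}_r,$$
bound the noise propagation error and the regularization error separately in the $r$-norm, and finally insert the parameter choice rule (\ref{eq:elecalpha normas r}). The role of the hypothesis $r \in [-a,\min\{u,s\}]$ is precisely to keep every exponent appearing in the applications of (\ref{eq:cota B*B}), (\ref{eq:cota galpha}) and (\ref{eq:calif}) inside its admissible range.

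For the noise term I write
$$\norm{x_\alpha^\delta-x_\alpha}_r = \norm{L^{-s} g_\alpha(B^*B) B^*(y^\delta-y)}_r = \norm{g_\alpha(B^*B) B^*(y^\delta-y)}_{r-s}.$$
Since $r\leq s$ we have $r-s\leq 0$, and since $r\geq -a$ we also have $r-s\geq -(a+s)$; hence Theorem \ref{theorem:cota B*B} with $\nu\doteq(s-r)/(a+s)\in[0,1]$ converts the $\norm{\cdot}_{r-s}$-norm into $\norm{(B^*B)^{(s-r)/(2(a+s))}(\cdot)}$ up to the factor $m^{-(s-r)/(a+s)}$. I then factor $B^*=(B^*B)^{1/2}\cdot(B^*B)^{-1/2}B^*$ and use that $(B^*B)^{-1/2}B^*$ is a partial isometry, followed by (\ref{eq:cota galpha}) with $\beta\doteq(a+2s-r)/(2(a+s))$; the conditions $r\leq a+2s$ and $r\geq -a$ place $\beta$ in $[0,1]$. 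This yields the estimate $\norm{x_\alpha^\delta-x_\alpha}_r \leq C_1\,\delta\,\alpha^{-(a+r)/(2(a+s))}$.

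For the regularization term, $\norm{x_\alpha-x^\dag}_r = \norm{r_\alpha(B^*B)L^s x^\dag}_{r-s}$. The same application of Theorem \ref{theorem:cota B*B} reduces matters to estimating $\norm{(B^*B)^{(s-r)/(2(a+s))} r_\alpha(B^*B) L^s x^\dag}$. I then invoke the factorization $L^s x^\dag=(B^*B)^{(u-s)/(2(a+s))}v$ from (\ref{eq:8 teo 8.23}), valid for every $u\in[0,a+2s]$. The two exponents combine into $(B^*B)^{(u-r)/(2(a+s))}r_\alpha(B^*B)v$, and (\ref{eq:calif}) with $\bar\mu\doteq(u-r)/(2(a+s))$ delivers $c_{\bar\mu}\alpha^{(u-r)/(2(a+s))}\norm{v}$; the membership $\bar\mu\in[0,1]\subset[0,\mu_{\ce}]$ is exactly $r\leq u$ together with $r\geq -a$ and $u\leq a+2s$. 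A further use of Theorem \ref{theorem:cota B*B} applied to $v=(B^*B)^{(s-u)/(2(a+s))}L^s x^\dag$ (splitting into the cases $u\leq s$ and $u>s$ to land the exponent inside $[0,1]$) bounds $\norm{v}$ by a constant times $\norm{L^s x^\dag}_{u-s}=\norm{x^\dag}_u$. The conclusion is $\norm{x_\alpha-x^\dag}_r \leq C_2\,\norm{x^\dag}_u\,\alpha^{(u-r)/(2(a+s))}$.

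Finally, substituting $\alpha=c(\delta/\norm{x^\dag}_u)^{2(a+s)/(a+u)}$ into both estimates, a direct exponent computation (using $1-(a+r)/(a+u)=(u-r)/(a+u)$ and $1-(u-r)/(a+u)=(a+r)/(a+u)$) collapses the two contributions to the same quantity $\text{const}\cdot\norm{x^\dag}_u^{(a+r)/(a+u)}\,\delta^{(u-r)/(a+u)}$, giving (\ref{eq:orden de conv theorem normas r}). The main technical obstacle is the exponent bookkeeping just described: the constraint $r\in[-a,\min\{u,s\}]$ is exactly what is needed to keep $\nu$, $\beta$ and $\bar\mu$ in the legal intervals. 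The independence of $C$ from $u$ follows because the auxiliary exponents $(s-r)/(a+s)$, $(u-s)/(a+s)$ and $(u-r)/(2(a+s))$ all remain bounded as $u$ varies in $[0,a+2s]$, so that the factors of the form $m^{\pm\cdot}$, $M^{\pm\cdot}$ and $c_{\bar\mu}$ admit uniform bounds depending only on $a$, $s$ and $r$.
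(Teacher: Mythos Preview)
Your proof is correct and follows essentially the same route as the paper's: the same splitting into noise and regularization errors, the same conversion of $\norm{\cdot}_{r-s}$ via Theorem~\ref{theorem:cota B*B} with $\nu=(s-r)/(a+s)$, the same use of (\ref{eq:cota galpha}) with $\beta=(a+2s-r)/(2(a+s))$ for the noise term, and the same factorization (\ref{eq:8 teo 8.23}) followed by (\ref{eq:calif}) with $\bar\mu=(u-r)/(2(a+s))$ for the regularization term. Your explicit bookkeeping of the admissible ranges for $\nu$, $\beta$ and $\bar\mu$, and your remark that the cases $u\le s$ and $u>s$ must be handled separately when bounding $\norm v$, are in fact more careful than the paper's presentation, which applies (\ref{eq:cota B*B}) with $\nu=(s-u)/(a+s)$ without commenting on the sign of $s-u$.
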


\begin{proof}
First note that due to the restriction on $r$, we have that
$x^\dag,\,x_\alpha,\, x_\alpha^\delta$ are all in $\X_r$. Just
like in the previous theorem, without loss of generality we will
suppose that $y\in\R(T)$.

For the error due to noise we have the following estimate:
\begin{align*}
\norm{x_\alpha^\delta-x_\alpha}_r&=\norm{R_\alpha(y^\delta-y)}_r &\\
&=\norm{L^{-s}g_\alpha(B^*B)B^*(y^\delta-y)}_r&\\
&=\norm{g_\alpha(B^*B)B^*(y^\delta-y)}_{r-s}&\\
&\leq m^{\frac{r-s}{a+s}}\norm{(B^*B)^{\frac{s-r}{2(a+s)}}g_\alpha(B^*B)B^*(y^\delta-y)}& \hspace{-0.6cm}\left(\text{by (\ref{eq:cota B*B}) with }\, \nu=\frac{s-r}{a+s}\right)\\
&= m^{\frac{r-s}{a+s}} \norm{(B^*B)^{\frac{a+2s-r}{2(a+s)}}(B^*B)^{-1/2} g_\alpha(B^*B)B^*(y^\delta-y)}& \\
&\leq m^{\frac{r-s}{a+s}}\norm{(B^*B)^{\frac{a+2s-r}{2(a+s)}}g_\alpha(B^*B) (B^*B)^{-1/2}B^*(y^\delta-y)}& \\
&\leq  m^{\frac{r-s}{a+s}}\, k\alpha^{\frac{-r-a}{2(a+s)}}\norm{(B^*B)^{-1/2}B^*(y^\delta-y)}& \hspace{-0.6cm}\left(\text{by (\ref{eq:cota galpha})  with } \beta\doteq\frac{a+2s-r}{2(a+s)}\right)\\
&=m^{\frac{r-s}{a+s}}\, k\alpha^{\frac{-r-a}{2(a+s)}}\norm{y-y^\delta}&\\
&\leq  m^{\frac{r-s}{a+s}}\, k \alpha^{-\frac{a+r}{2(a+s)}}\delta&\\
&\leq m^{\frac{r-s}{a+s}}\, k\left[c\left(\frac{\delta}{\norm{x^\dag}_u}\right)^{\frac{2(a+s)}{a+u}}\right]^{-\frac{a+r}{2(a+s)}}\delta &\hspace{-0.6cm}(\text{by (\ref{eq:elecalpha normas r})})\\
&= C_1 \norm{x^\dag}_u^{\frac{a+r}{a+u}}\delta^{\frac{u-r}{a+u}},&
\end{align*}
where $C_1=m^{\frac{r-s}{a+s}}\, k\, c^{-\frac{a+r}{2(a+s)}}$.
Thus
\begin{equation}\label{eq:cota ruido norma r}
\norm{x_\alpha^\delta-x_\alpha}_r\leq C_1\norm{x^\dag}_u^{\frac{a+r}{a+u}}\delta^{\frac{u-r}{a+u}}.
\end{equation}
For the regularization error note that:
\begin{align*}
\norm{x_\alpha-x^\dag}_r &= \norm{R_\alpha y-x^\dag }_r&\\
&= \norm{L^{-s}g_\alpha(B^*B)B^*y-x^\dag}_r & \\
&= \norm{L^{-s}g_\alpha(B^*B)B^*BL^s x^\dag-x^\dag}_r &(\text{because }B^*y=B^*BL^s x^\dag)\\
&= \norm{L^{-s}[g_\alpha(B^*B)B^*B-I]L^sx^\dag}_r&\\
&= \norm{L^{-s}r_\alpha(B^*B)L^s x^\dag}_r&\\
&= \norm{L^{-s}r_\alpha(B^*B)(B^*B)^{\frac{u-s}{2(a+s)}}v}_r & \left(\text{by } (\ref{eq:8 teo 8.23})\right)\\
&=\norm{L^{-s}(B^*B)^{\frac{u-s}{2(a+s)}}r_\alpha(B^*B)v}_r & \\
&=\norm{(B^*B)^{\frac{u-s}{2(a+s)}}r_\alpha(B^*B)v}_{r-s}&\\
&\leq m^{\frac{r-s}{a+s}}\norm{(B^*B)^{\frac{s-r}{2(a+s)}}(B^*B)^{\frac{u-s}{2(a+s)}}r_\alpha(B^*B)v} &  \left(\text{by (\ref{eq:cota B*B}) with } \nu=\frac{s-r}{a+s}\right)\\
&= m^{\frac{r-s}{a+s}} \norm{(B^*B)^{\frac{u-r}{2(a+s)}}r_\alpha(B^*B)v}&\\
&\leq m^{\frac{r-s}{a+s}}\,c_{\bar{\mu}}\alpha^{\frac{u-r}{2(a+s)}}\norm{v}  & \hskip -.9in {\left(\text{by (\ref{eq:calif}) with } \bar{\mu}\doteq\frac{u-r}{2(a+s)},\,0\leq\bar{\mu}\leq 1\right)}\\
&= m^{\frac{r-s}{a+s}}\,c_{\bar{\mu}} \left[c\left(\frac{\delta}{\norm{x^\dag}_u}\right)^{\frac{2(a+s)}{a+u}}\right]^{\frac{u-r}{2(a+s)}}\norm{v}& \\
&=m^{\frac{r-s}{a+s}}\,c_{\bar{\mu}}\, c^{\frac{u-r}{2(a+s)}}\delta^{\frac{u-r}{a+u}}\norm{v}\norm{x^\dag}_u^{\frac{r-u}{a+u}}& \\
&=m^{\frac{r-s}{a+s}}\,c_{\bar{\mu}}\, c^{\frac{u-r}{2(a+s)}}\delta^{\frac{u-r}{a+u}}\norm{(B^*B)^{\frac{s-u}{2(a+s)}}L^sx^\dag}\norm{x^\dag}_u^{\frac{r-u}{a+u}}& \left(\text{by } (\ref{eq:8 teo 8.23})\right) \\
&\leq m^{\frac{r-s}{a+s}}\,c_{\bar{\mu}}\,(c+1)(M+1) \norm{x^\dag}_u^{\frac{a+r}{a+u}}\delta^{\frac{u-r}{a+u}}.&\left(\text{by (\ref{eq:cota B*B}) with } \nu=\frac{s-u}{a+s}\right)\\
\end{align*}
Thus there exists $C_2\doteq
m^{\frac{r-s}{a+s}}\,c_{\bar{\mu}}\,(c+1)(M+1)$ such that
\begin{equation}\label{eq:cota reg norma r}
\norm{x_\alpha-x^\dag}_r\leq C_2 \norm{x^\dag}_u^{\frac{a+r}{a+u}}\delta^{\frac{u-r}{a+u}}.
\end{equation}
\noindent Finally, from (\ref{eq:cota ruido norma r}) and
(\ref{eq:cota reg norma r}) it follows that there exists $C\doteq
C_1+C_2$ such that
$$\norm{x_\alpha^\delta-x^\dag}_r\leq C
\norm{x^\dag}_u^{\frac{a+r}{a+u}}\delta^{\frac{u-r}{a+u}},$$ as we
wanted to show.
\end{proof}

Regarding the estimate for the total error (\ref{eq:orden de conv
theorem normas r}) in the previous theorem it is important to note
the following: if $r>0$ then the order of convergence that we
obtain is worse than the one obtained in Theorem \ref{teo:ordenes
de convergencia} (see (\ref{eq:orden de conv theorem 8.23})), but
now this order is obtained in the stronger $\norm{\cdot}_r$ norm. On the
other hand if $r<0$, then $\norm{\cdot}_r$ is weaker than  $\norm{\cdot}$ and
therefore (\ref{eq:orden de conv theorem normas r}) provides an
estimate for the total error in a norm which is weaker than the
norm in $\X$. However, in this case it is important to note that
the order $\mathcal O \left(\delta ^{\frac{u-r}{a+u}}\right)$ in
(\ref{eq:orden de conv theorem normas r}) is now better than the
one obtained in (\ref{eq:orden de conv theorem 8.23}).

It is worth noting here that the parameter choice rule
(\ref{eq:elecalpha normas r}) requires of the explicit knowledge
of the degree of regularity $u$ of $x^\dag$. However, the
following result shows that convergence can also be obtained in
the norm $\norm{\cdot}_r$ when the parameter choice rule is chosen in
the form $\alpha=\delta^\varepsilon$, for $\varepsilon$ taking any
value within a certain interval.

\begin{thm}\label{teo:ext ordenes norma r}
Let $\X$, $\Y$, $T$, $T^\dag$, $L$, $(\X_t)_{t\in\mathbb R}$,
$s\geq 0$, $a>0$, $B=TL^{-s}$, $g_\alpha,$ $r_\alpha$,
$R_\alpha=L^{-s}g_\alpha(B^*B)B^*$, $y\in\D(T^\dag)$, $y^\delta
\in\Y$, $\|y-y^\delta\|\le\delta$, $u\in[0,a+2s],$ $x^\dag=T^\dag
y\in\X_u$, $x_\alpha=R_\alpha y$ y $x_\alpha^\delta=R_\alpha
y^\delta$, all as in Theorem \ref{teo:ordenes de convergencia}.
Let $r\in[-a, \min\{u,s\}]$ and suppose that the parameter choice
rule $\alpha$ is chosen as $\alpha=c\,\delta^\varepsilon$ where
$\varepsilon\in\left(0,\frac{2(a+s)}{a+r}\right]$. Then
\[\norm{x_\alpha^\delta-x^\dag}_r=\mathcal O(\delta^\sigma),\]
where $\sigma=\min\left\{
1-\frac{\varepsilon(a+r)}{2(a+s)},\frac{\varepsilon(u-r)}{2(a+s)}\right\}$.
The optimal order of convergence is obtained when $\varepsilon$ is
chosen to be $\varepsilon=\frac{2(a+s)}{a+u}$, in which case the
order of convergence (\ref{eq:orden de conv theorem normas r}) of
Theorem  \ref{teo:convergencia normas intermedias} is obtained.
\end{thm}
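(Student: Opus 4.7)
The plan is to adapt the balancing argument of Theorem \ref{coro:coro de ordenes de convergencia} to the $\norm{\cdot}_r$ norm, reusing the two error estimates that were derived, parameter-independent, inside the proof of Theorem \ref{teo:convergencia normas intermedias}.

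First I would isolate from the proof of Theorem \ref{teo:convergencia normas intermedias} the two \textbf{abstract bounds} that hold for any positive $\alpha$ (before the specific choice \eqref{eq:elecalpha normas r} is plugged in): namely
\[
\norm{x_\alpha^\delta-x_\alpha}_r\le C_1\,\delta\,\alpha^{-\frac{a+r}{2(a+s)}},\qquad
\norm{x_\alpha-x^\dag}_r\le C_2\,\norm{x^\dag}_u\,\alpha^{\frac{u-r}{2(a+s)}},
\]
where $C_1,C_2$ depend only on $a,s,r,m,M,k,c_{\bar\mu}$. The derivation of the noise estimate uses \eqref{eq:cota B*B} with $\nu=\frac{s-r}{a+s}$ together with \eqref{eq:cota galpha} applied with $\beta=\frac{a+2s-r}{2(a+s)}\in[0,1]$, while the regularization estimate uses the representation $L^sx^\dag=(B^*B)^{\frac{u-s}{2(a+s)}}v$ from \eqref{eq:8 teo 8.23}, the calibration condition \eqref{eq:calif} with $\bar\mu=\frac{u-r}{2(a+s)}$, and \eqref{eq:cota B*B} again. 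The hypothesis $r\in[-a,\min\{u,s\}]$ guarantees that each exponent $\nu$ and $\bar\mu$ lies in the admissible range $[0,1]$ (in particular $\bar\mu\le 1$ follows from $u\le a+2s$ together with $r\ge -a$).

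With these two bounds in hand, the rest is a direct balancing. Substituting $\alpha=c\,\delta^\varepsilon$ yields
\[
\norm{x_\alpha^\delta-x^\dag}_r\le C_1\,c^{-\frac{a+r}{2(a+s)}}\,\delta^{\,1-\frac{\varepsilon(a+r)}{2(a+s)}}
+C_2\,c^{\frac{u-r}{2(a+s)}}\,\norm{x^\dag}_u\,\delta^{\,\frac{\varepsilon(u-r)}{2(a+s)}},
\]
so that $\norm{x_\alpha^\delta-x^\dag}_r=\mathcal O(\delta^\sigma)$ with
\[
\sigma=\min\!\left\{1-\frac{\varepsilon(a+r)}{2(a+s)},\,\frac{\varepsilon(u-r)}{2(a+s)}\right\}.
\]
The range $\varepsilon\in\bigl(0,\tfrac{2(a+s)}{a+r}\bigr]$ is precisely what is needed to keep the first exponent nonnegative (and the second is automatically nonnegative since $u\ge r$).

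Finally, to identify the optimal $\varepsilon$ I would equate the two exponents:
\[
1-\frac{\varepsilon(a+r)}{2(a+s)}=\frac{\varepsilon(u-r)}{2(a+s)}\;\Longleftrightarrow\;
\varepsilon=\frac{2(a+s)}{a+u},
\]
at which value a direct computation gives $\sigma=\frac{u-r}{a+u}$, matching exactly the order \eqref{eq:orden de conv theorem normas r} of Theorem \ref{teo:convergencia normas intermedias}. I do not anticipate a genuine obstacle: the bulk of the technical work (the application of Heinz/Proposition \ref{prop:8.19}, the range characterization \eqref{eq:igualdad de rango BestrellaB en teorema}, and the factorization \eqref{eq:8 teo 8.23}) was already carried out in the preceding two theorems, and the present statement is essentially the observation that the \emph{parameter} $\alpha$ can be decoupled from the \emph{regularity index} $u$, at the cost of a loss in the convergence order whenever $\varepsilon\neq \frac{2(a+s)}{a+u}$. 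The only point that requires mild care is checking that the admissible range of $r$ makes all fractional exponents used in Proposition \ref{prop:8.19}(iv) and Theorem \ref{theorem:cota B*B} lie in the allowed interval.
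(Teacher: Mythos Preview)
Your proposal is correct and follows essentially the same approach as the paper: extract the two $\alpha$-dependent bounds on the noise and regularization errors in the $\norm{\cdot}_r$ norm from the proof of Theorem \ref{teo:convergencia normas intermedias}, substitute $\alpha=c\,\delta^\varepsilon$, take the minimum of the resulting exponents, and then balance to find the optimal $\varepsilon$. If anything, your write-up is more explicit than the paper's (which simply invokes ``similar steps'' for the two bounds), and your verification that the constraint $r\in[-a,\min\{u,s\}]$ keeps $\nu$, $\beta$, and $\bar\mu$ in the admissible ranges is a welcome detail that the paper leaves implicit.
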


\begin{proof}
Following similar steps as in the proof on Theorem
\ref{teo:convergencia normas intermedias} it follows immediately
that
\[\norm{x_\alpha^\delta-x_\alpha}_r\leq
C_1\alpha^{-\frac{a+r}{2(a+s)}}\delta\hspace{0.5cm}\text{and}\hspace{0.5cm}
\norm{x_\alpha-x^\dag}_r\leq C_2 \alpha^{\frac{u-r}{2(a+s)}}.\]
Since $\alpha=c\,\delta^\varepsilon$ it then follows that
\begin{equation}\label{eq:1 en ext norma r}
\norm{x_\alpha^\delta-x_\alpha}_r\leq
C_1\,\delta^{1-\frac{\varepsilon(a+r)}{2(a+s)}}
\end{equation}
and
\begin{equation}\label{eq:2 en ext norma r}
\norm{x_\alpha-x^\dag}_r\leq C_2\,
\delta^{\frac{\varepsilon(u-r)}{2(a+s)}}.
\end{equation}
Form (\ref{eq:1 en ext norma r}) and (\ref{eq:2 en ext norma r})
it follows that
\[\norm{x_\alpha^\delta-x^\dag}_r=\mathcal O(\delta^\sigma),\]
where $\sigma=\min\left\{
1-\frac{\varepsilon(a+r)}{2(a+s)},\frac{\varepsilon(u-r)}{2(a+s)}\right\}$.
Also, from (\ref{eq:1 en ext norma r}) and (\ref{eq:2 en ext norma
r}) we also have that the order of convergence is optimal when
$\varepsilon$ is chosen such that
$$1-\frac{\varepsilon(a+r)}{2(a+s)}=\frac{\varepsilon(u-r)}{2(a+s)},$$
that is for $\varepsilon=\frac{2(a+s)}{a+u}$, in which case
$\sigma=\frac{u-r}{a+u}$.
\end{proof}

It is important to note now that the results of Theorems
\ref{coro:coro de ordenes de convergencia} and
\ref{teo:convergencia normas intermedias} are obtained for
particular choices  of the parameters in Theorem \ref{teo:ext
ordenes norma r}. In fact if $r=0$ then we obtain the convergence
result of Theorem \ref{coro:coro de ordenes de convergencia},
while for $\varepsilon=\frac{2(a+s)}{a+u}$ the convergence result
of Theorem \ref{teo:convergencia normas intermedias} is obtained.

In the next theorem we show that the optimal order of convergence
in Theorem \ref{coro:coro de ordenes de convergencia} can also be
achieved under the assumption of a source condition on $x^\dag$,
associated to the restriction of the operator $T$ to the Hilbert
scale $\X_s$, for some $s\geq 0$.

\begin{thm}\label{teo:ordenes de convergencia ext}
Let $\X$, $\Y$, $T$, $T^\dag$, $L$, $(\X_t)_{t\in\mathbb R}$,
$s\geq 0$, $a>0$, $\mu_0\ge 1$, $B=TL^{-s}$, $g_\alpha,$
$r_\alpha$, $R_\alpha=L^{-s}g_\alpha(B^*B)B^*$, $y\in\D(T^\dag)$,
$y^\delta \in\Y$, $\|y-y^\delta\|\le\delta$, $x^\dag=T^\dag y$,
$x_\alpha=R_\alpha y$ and $x_\alpha^\delta=R_\alpha y^\delta$, all
as in Theorem \ref{teo:ordenes de convergencia}. Suppose that
$x^\dag \in
\R\left(\left(L^{-2s}T^*T_{|_{\X_s}}\right)^{\frac{u-s}{2(a+s)}}\right)$
for some $u\in(s,2\mu_{\scriptscriptstyle 0}(a+s)-a]$ and that the
regularization parameter $\alpha$ is chosen as
\begin{equation}\label{eq:elecalpha theorem ext}
\alpha=c\left(
\frac{\delta}{\norm{x^\dag}_u}\right)^{\frac{2(a+s)}{a+u}}
\end{equation}
where $c>0$. Then there exists a contant $C$ (which depends on $a$
and $s$ but not on $u$) such that the following estimate for the
total error holds
\begin{equation}\label{eq:orden de conv theorem ext}
\norm{x_\alpha^\delta-x^\dag}\leq
C\delta^{\frac{u}{a+u}}.
\end{equation}
\end{thm}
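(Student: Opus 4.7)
The plan is to retrace the steps of Theorem \ref{coro:coro de ordenes de convergencia}, substituting the new source condition in place of the identification $\R((B^*B)^{(u-s)/(2(a+s))}) = \X_{u-s}$ that powered the earlier argument. First I will note that the noise contribution needs no change: the chain of estimates producing (\ref{eq:cota ruido}),
\[
\|x_\alpha^\delta-x_\alpha\|\le C_1\delta\,\alpha^{-\frac{a}{2(a+s)}},
\]
depends only on $T$, $L$, $s$, and $\{g_\alpha\}$, not on any regularity of $x^\dag$, so it carries over verbatim.

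To recast the source condition in the form needed for the regularization-error estimate, I would first verify the conjugacy
\[
B^*B=L^s\bigl(L^{-2s}T^*T_{|_{\X_s}}\bigr)L^{-s}
\]
as operators on $\X$. Setting $A\doteq L^{-2s}T^*T_{|_{\X_s}}$ and $\nu\doteq\frac{u-s}{2(a+s)}$, the functional calculus yields $(B^*B)^\nu=L^sA^\nu L^{-s}$. Hence, if $x^\dag=A^\nu w$ with $w\in\X_s$ is the element provided by the source condition, then
\[
L^s x^\dag=L^sA^\nu w=(B^*B)^\nu v,\qquad v\doteq L^s w\in\X,\quad \|v\|=\|w\|_s.
\]
This is precisely identity (\ref{eq:8 teo 8.23}) of the earlier proof, now available directly and without the restriction $u\le a+2s$ that was implicit when it was derived via Theorem \ref{theorem:cota B*B}.

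With this identity in hand I would repeat, line for line, the regularization-error estimate from the proof of Theorem \ref{coro:coro de ordenes de convergencia}:
\begin{align*}
\|x_\alpha-x^\dag\|
&=\|L^{-s}r_\alpha(B^*B)(B^*B)^\nu v\|
=\|(B^*B)^\nu r_\alpha(B^*B)v\|_{-s}\\
&\le m^{-\frac{s}{a+s}}\|(B^*B)^{\frac{u}{2(a+s)}}r_\alpha(B^*B)v\|
\le m^{-\frac{s}{a+s}}c_{\bar\mu}\,\alpha^{\frac{u}{2(a+s)}}\|v\|,
\end{align*}
using (\ref{eq:cota B*B}) with exponent $\frac{s}{a+s}\in[0,1]$ and applying the qualification (\ref{eq:calif}) with $\bar\mu=\frac{u}{2(a+s)}$. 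The hypothesis $u\le 2\mu_0(a+s)-a$ ensures $\bar\mu\le\mu_0$, placing us in the range of (\ref{eq:calif}). Identifying $\|v\|$ as the natural source quantity $\|x^\dag\|_u$ (coinciding with the Hilbert-scale norm when $u\le a+2s$) gives $\|x_\alpha-x^\dag\|\le C_2\|x^\dag\|_u\,\alpha^{u/(2(a+s))}$.

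Combining with the noise estimate and inserting the parameter choice (\ref{eq:elecalpha theorem ext}) balances the two terms and yields $\|x_\alpha^\delta-x^\dag\|=\mathcal{O}(\delta^{u/(a+u)})$ with a constant depending on $a$ and $s$ but not on $u$, exactly as in the last lines of the proof of Theorem \ref{coro:coro de ordenes de convergencia}. The main conceptual obstacle — and the one requiring care — is the reinterpretation of (\ref{eq:8 teo 8.23}): Theorem \ref{theorem:cota B*B} delivers $\R((B^*B)^\nu)=\X_{2\nu(a+s)}$ only for $\nu\le 1/2$, i.e.\ only for $u\le a+2s$; beyond that, the new source condition directly supplies the required representation of $L^s x^\dag$, extending the admissible range of $u$ up to $2\mu_0(a+s)-a$ at no extra cost.
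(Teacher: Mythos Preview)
Your argument is correct but follows a genuinely different path from the paper's. The paper does \emph{not} reuse the scheme of Theorem~\ref{coro:coro de ordenes de convergencia}; instead it first identifies $T^\sharp\doteq L^{-2s}T^*$ as the adjoint of $T_{|_{\X_s}}:(\X_s,\|\cdot\|_s)\to\Y$, verifies that $R_\alpha=g_\alpha(T^\sharp T)T^\sharp$ is an ordinary spectral regularization of this restricted operator, and then controls $\|x_\alpha^\delta-x^\dag\|$ via the interpolation inequality
\[
\|x_\alpha^\delta-x^\dag\|\le m^{-1}\,\|T(x_\alpha^\delta-x^\dag)\|^{\frac{s}{a+s}}\,\|x_\alpha^\delta-x^\dag\|_s^{\frac{a}{a+s}},
\]
bounding the two factors separately by the standard convergence results for spectral methods (Theorems~4.2--4.3 and Corollary~4.4 in Engl--Hanke--Neubauer). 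Your route is more self-contained: the unitary conjugacy $B^*B=L^sAL^{-s}$ (with $L^s:\X_s\to\X$ an isometric isomorphism by Proposition~\ref{prop:8.19}\,\textit{ii)}) transports the functional calculus, giving $(B^*B)^\nu=L^sA^\nu L^{-s}$ and hence the representation $L^sx^\dag=(B^*B)^\nu v$ directly from the source hypothesis; after that, the chain of estimates from Theorem~\ref{coro:coro de ordenes de convergencia} applies verbatim. This avoids the $\|\cdot\|_{-a}$--$\|\cdot\|_s$ interpolation and the external citations, and in fact shows that the qualification constraint you actually need is only $\bar\mu=\tfrac{u}{2(a+s)}\le\mu_0$, i.e.\ $u\le 2\mu_0(a+s)$, slightly relaxing the paper's stated upper bound $2\mu_0(a+s)-a$. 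One small caution: your identification of $\|v\|$ with ``$\|x^\dag\|_u$'' is not an equality---for $u\le a+2s$ the two quantities are merely equivalent (via Theorem~\ref{theorem:cota B*B}), and for larger $u$ the symbol $\|x^\dag\|_u$ in (\ref{eq:elecalpha theorem ext}) has to be read as a stand-in for the source norm $\|w\|_s$; this affects only the shape of the constant, not the rate $\delta^{u/(a+u)}$.
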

\begin{proof}
Consider the operator
\begin{equation}\label{eq:op T restringido}
T_{|_{\X_s}}:(\X_s,\norm{\cdot}_s)\longrightarrow\Y.
\end{equation}
Observe that $\forall\,x\in\X_s$, $y\in\Y$ we have
\begin{align*}
\langle x,L^{-2s}T^*y\rangle_s&= \langle L^sx,L^{-s}T^*y\rangle&\\
&= \langle x,T^*y\rangle&\\
&=\langle Tx,y\rangle.&
\end{align*}
It then follows that the adjoint $T^\sharp$ of the operator
$T_{|_{\X_s}}$ defined in (\ref{eq:op T restringido}) is given by
$T_{|_{\X_s}}=L^{-2s}T^*$. Hence, the source condition $x^\dag\in
\R\left(\left(L^{-2s}T^*T_{|_{\X_s}}\right)^{\frac{u-s}{2(a+s)}}\right)$
can also be written as $x^\dag\in \R\left(\left(T^\sharp
T\right)^{\frac{u-s}{2(a+s)}}\right)$, that is $x^\dag=
\left(T^\sharp T\right)^{\frac{u-s}{2(a+s)}}v$ for some $v\in
\X_s$.

On the other hand
\begin{align}
R_\alpha &= L^{-s}g_\alpha(B^*B)B^* \nonumber\\
&= L^{-s}B^*g_\alpha(BB^*) \nonumber\\
&= L^{-s}L^{-s}T^*g_\alpha(TL^{-s}L^{-s}T^*)  \nonumber\\
&= T^\sharp g_\alpha(TT^\sharp )  \nonumber\\
&= g_\alpha(T^\sharp T)T^\sharp,\label{eq:Ralpha espectral}
\end{align}
and therefore the family of operators $R_\alpha$ constitutes a
spectral regularization for the operator $T_{|_{\X_s}}$ given in
(\ref{eq:op T restringido}).

Observe now that
\begin{align}
\norm{x_\alpha ^\delta-x^\dag}&\leq \norm{x_\alpha^\delta -x^\dag}^{\frac{s}{a+s}}_{-a}
\norm{x_\alpha^\delta -x^\dag}^{\frac{a}{a+s}}_{s} \nonumber\\
&\leq m^{-1} \norm{T(x_\alpha^\delta -x^\dag)}^{\frac{s}{a+s}}\norm{x_\alpha^\delta -x^\dag}^{\frac{a}{a+s}}_{s},
 \label{eq:cotas}
\end{align}
where the first inequality follows from (\ref{eq:des-interp-Esc
Hilb}) with $q=-a $ and $ r=0$ and the second one from (\ref{eq:LT
suavidad}).

For the first factor in the RHS of (\ref{eq:cotas}) we have the
estimate
\begin{align*}
\norm{T(x_\alpha^\delta -x^\dag)} &\leq \norm{T(x_\alpha^\delta -x_\alpha)}+\norm{T(x_\alpha -x^\dag)}\\
& \leq k\delta + \tilde c\alpha^{\hat u +1/2},
\end{align*}
with $\hat u\doteq \frac{u-s}{2(a+s)}$, $\tilde c=\norm{v}$ and
$k$ as in (\ref{eq:cota galpha}), where the last inequality
follows immediately from (\ref{eq:Ralpha espectral}) and from
Theorems 4.2 and 4.3 in \cite{refb:Engl-Hanke-Neubauer-1996} (note
that $0<\hat u\leq \mu_{\scriptscriptstyle 0}-\frac{1}{2})$. Then,
with $\alpha$ as in (\ref{eq:elecalpha theorem ext}) it follows
that
\begin{align}\label{eq:no se}
\norm{T(x_\alpha^\delta -x^\dag)}&\leq k \delta + \tilde c
\left(\norm{x^\dag}_u^{\frac{-2(a+s)}{a+u}}\right)^ {(\hat u
+1/2)} \left(\delta^{\frac{2(a+s)}{a+u}}\right)^{\hat u +1/2}\\
&=(k+ \tilde c \norm{x^\dag}_u^{-1})\,\delta\\
&\le \tilde C\,\delta,
\end{align}
where $\tilde C\doteq k+ \tilde c
(1+\gamma^{(1-2\mu_0)(a+s)})\norm{x^\dag}_s^{-1}$. Note here that
$\tilde C$ is independent of $u$.

On the other hand, for the second factor in (\ref{eq:cotas}), from
Corollary  4.4 in \cite{refb:Engl-Hanke-Neubauer-1996} with
$\mu=\frac{u-s}{2(a+s)}$, we get the estimate
\begin{equation}\label{eq:no se 1}
\norm{x_\alpha^\delta -x^\dag}_{s}\leq c\,\delta ^{\frac{2\mu}{2\mu+1}}= c\, \delta^{\frac{u-s}{a+u}},
\end{equation}
where $c>0$.

Finally, with the estimates (\ref{eq:no se}) and (\ref{eq:no se
1}) in (\ref{eq:cotas}) we obtain that
$$ \norm{x_\alpha ^\delta-x^\dag}\leq m^{-1}\left( \tilde C\delta\right)^{\frac{s}{a+s}}
\left(c\,\delta^{\frac{u-s}{a+u}}\right)^{\frac{a}{a+s}}= \hat C
\delta^{\frac{u}{a+u}},$$
where $\hat C\doteq m^{-1}\tilde
C^{\frac{s}{a+s}}c^{\frac{a}{a+s}}$. This concludes the proof.
\end{proof}

In the next theorem we will show that under the same conditions of
Theorem \ref{teo:ordenes de convergencia ext}, with the additional
hypotheses that the operators $L^{-1}$ and $T^*T$ commute, it is
possible to obtain the same order of convergence as in
(\ref{eq:orden de conv theorem ext}), but now for a larger range
of values of $u$.

\begin{thm}\label{teo: T y L conmutan}
Let $\X$, $\Y$, $T$, $T^\dag$, $L$, $s\geq 0$, $a>0$, $\mu_0\ge
1$, $B=TL^{-s}$, $g_\alpha,$ $r_\alpha$,
$R_\alpha=L^{-s}g_\alpha(B^*B)B^*$, $y\in\D(T^\dag)$, $y^\delta
\in\Y$, $x^\dag=T^\dag y$, $x_\alpha=R_\alpha y$ y
$x_\alpha^\delta=R_\alpha y^\delta$ y $\alpha=\alpha(\delta)$, all
as in Theorem \ref{teo:ordenes de convergencia}. Suppose also that
the operators $L^{-1}$ and $T^*T$ commute and that $x^\dag \in
\R\left(\left(B^*B\right)^{\frac{u}{2(a+s)}}\right)$ for some
$u\in[0,2\mu_{\scriptscriptstyle 0}(a+s)]$. Then there exists a
constant $C$ (which depends on $a$ and $s$ but not on $u$) such
that the following estimate for the total error holds:
\begin{equation*}
\norm{x_\alpha^\delta-x^\dag}\leq
C\delta^{\frac{u}{a+u}}.
\end{equation*}
\end{thm}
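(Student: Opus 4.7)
The plan is to exploit the commutativity hypothesis to bypass the Hilbert-scale interpolation machinery used in Theorem \ref{teo:ordenes de convergencia ext}, replacing it with direct spectral calculus on $B^*B$. Under the assumption that $L^{-1}$ and $T^*T$ commute, $L^{-s}$ commutes with $B^*B = L^{-s}T^*TL^{-s}$, hence with every Borel function of $B^*B$, in particular with $g_\alpha(B^*B)$ and $r_\alpha(B^*B)$. Moreover, the extension of Theorem \ref{theorem:cota B*B} announced in the remark (i.e., Theorem \ref{theorem:extension theorem B*B}) yields $\R((B^*B)^{\nu/2})=\X_{\nu(a+s)}$ for all admissible $\nu>0$, so the source condition $x^\dag\in\R((B^*B)^{u/(2(a+s))})$ makes sense and is equivalent to $x^\dag\in\X_u$ for every $u\in[0,2\mu_{\scriptscriptstyle 0}(a+s)]$, with $\|v\|$ comparable to $\|x^\dag\|_u$.

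For the regularization error the plan is to simplify drastically. Starting from $B^*y=L^{-s}T^*Tx^\dag = B^*B\,L^s x^\dag$ (which uses only that $L^{-s}$ commutes with $T^*T$), one gets
\begin{equation*}
x_\alpha-x^\dag = L^{-s}g_\alpha(B^*B)B^*B\,L^s x^\dag-x^\dag = g_\alpha(B^*B)B^*B\,x^\dag - x^\dag = -r_\alpha(B^*B)\,x^\dag,
\end{equation*}
the second equality being precisely where commutativity of $L^{-s}$ with $g_\alpha(B^*B)B^*B$ is used. Inserting the source representation $x^\dag = (B^*B)^{u/(2(a+s))}v$ and applying condition (C3) with $\bar\mu = u/(2(a+s))\le \mu_{\scriptscriptstyle 0}$ gives
\begin{equation*}
\|x_\alpha-x^\dag\|\le c_{\bar\mu}\,\alpha^{u/(2(a+s))}\|v\| \le C_2\,\alpha^{u/(2(a+s))}\|x^\dag\|_u.
\end{equation*}

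The noise error requires no change: the estimate $\|x_\alpha^\delta-x_\alpha\|\le C_1\delta\,\alpha^{-a/(2(a+s))}$ from the proof of Theorem \ref{coro:coro de ordenes de convergencia} was derived without any commutativity assumption and without any restriction on $u$, so it carries over verbatim. Combining the two bounds and inserting the parameter choice $\alpha = c(\delta/\|x^\dag\|_u)^{2(a+s)/(a+u)}$ from (\ref{eq:elecalpha}) balances the two terms and yields the asserted order $\|x_\alpha^\delta-x^\dag\|\le C\,\delta^{u/(a+u)}$, with a constant $C$ depending only on $a$, $s$, $m$, $M$, $c$ and the constants of $\{g_\alpha\}$, not on $u$.

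The main obstacle is of a foundational rather than computational nature: one must be confident that every spectral-calculus manipulation is legitimate for the possibly unbounded operator $B^*B$ on the full range $u\in[0,2\mu_{\scriptscriptstyle 0}(a+s)]$, in particular that the range identification and the two-sided norm equivalence of Theorem \ref{theorem:cota B*B} genuinely extend beyond $\nu=1$ (i.e., that one may invoke Theorem \ref{theorem:extension theorem B*B}). Once that extension is available, the commutativity collapses the proof to the scalar spectral estimate above, which is why no interpolation or $T^\sharp$-adjoint argument is needed here, in contrast with Theorem \ref{teo:ordenes de convergencia ext}.
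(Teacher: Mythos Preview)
Your proposal is correct and follows essentially the same route as the paper: reuse the noise-propagation bound $\|x_\alpha^\delta-x_\alpha\|\le C_1\delta\,\alpha^{-a/(2(a+s))}$ from Theorem~\ref{coro:coro de ordenes de convergencia}, then exploit the commutativity of $L^{-s}$ with $B^*B$ (and hence with $g_\alpha(B^*B)$) to reduce the regularization error to $\|r_\alpha(B^*B)(B^*B)^{u/(2(a+s))}v\|$, and finish with condition~C3 and the parameter choice~(\ref{eq:elecalpha}). One small remark: the ``foundational obstacle'' you flag is not actually needed for the argument. The paper's proof never invokes Theorem~\ref{theorem:extension theorem B*B} nor the norm equivalence $\|v\|\sim\|x^\dag\|_u$; it simply takes the source representation $x^\dag=(B^*B)^{u/(2(a+s))}v$ as given and absorbs $\|v\|$ into the constant $C_2$. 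Your extra step of converting $\|v\|$ into $\|x^\dag\|_u$ is harmless but unnecessary, and since $B=TL^{-s}$ is bounded, all the spectral calculus on $B^*B$ is routine.
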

\begin{proof}
To prove this result we will follow similar steps as those in the
previous theorems, proceeding to estimate the error due to noise
and the regularization error separately. Just like in Theorem
\ref{coro:coro de ordenes de convergencia}, without loss of
generality we will assume that $y\in \R(T)$. For the error due to
noise, with the same proof as in Theorem \ref{coro:coro de ordenes
de convergencia}, from (\ref{eq:cota ruido}) we have that
\begin{equation}\label{eq:cota ruido theorem ext conm}
 \norm{x_\alpha^\delta
- x_\alpha}\leq C_1 \delta\alpha^{- \frac{a}{2(a+s)}},
\end{equation}
where $C_1=k\,m^{-\frac{s}{a+s}}$ with $k$ as in (\ref{eq:cota
galpha}) and  $m$ as in (\ref{eq:LT suavidad}).

On the other hand, since $L^{-1}$ commutes with $T^\ast T$, it
follows that $L^{-s}$ commutes with $B^\ast B$ and therefore, with
any function of $B^\ast B$. Let $v\in \X$ such that
$x^\dagger=(B^\ast B)^{\frac{u}{2(a+s)}}v$. Then for the
regularization error we have that
\begin{align}
\norm{x_\alpha-x^\dag}=&\norm{R_\alpha y -
x^\dag}& \nonumber \\
=&\norm{L^{-s}g_\alpha(B^*B)B^*y -x^\dag}&\nonumber\\
=& \norm{\left(L^{-s}g_\alpha(B^*B)B^*BL^s-I\right) x^\dag} &(\text{since }B^*y=B^*BL^sx^\dag)\nonumber\\
=&\norm{\left(g_\alpha(B^*B)B^*BL^{-s}L^s -I\right)x^\dag}&(\text{since }L^{-s} \text{ commutes with } T^*T)\nonumber\\
=& \norm{\left(g_\alpha(B^*B)B^*B-I\right)(B^*B)^{\frac{u}{2(a+s)}}v}
&\left(\text{since }x^\dag \in\R\left((B^*B)^\frac{u}{2(a+s)}\right)\right)\nonumber\\
=&\norm{r_\alpha(B^*B)(B^*B)^{\frac{u}{2(a+s)}}v} &\nonumber\\
=&\norm{(B^*B)^{\frac{u}{2(a+s)}}r_\alpha(B^*B)v} &\nonumber\\
\leq &c_{\bar\mu}\alpha ^{\frac{u}{2(a+s)}}\norm{v}&\left(\text{by
(\ref{eq:calif}) with }
{\bar\mu}\doteq\frac{u}{2(a+s)}\right)\nonumber\\
\doteq & C_2\alpha^{\frac{u}{2(a+s)}}.\label{eq:cuarentaydos} &
\end{align}
Thus
\begin{equation}\label{eq:cota reg theorem ext conm}
\norm{x_\alpha-x^\dag}\leq C_2\,\alpha ^{\frac{u}{2(a+s)}}.
\end{equation}
Finally from (\ref{eq:cota ruido theorem ext conm}) and
(\ref{eq:cuarentaydos}) it follows that
\begin{align*}
\norm{x_\alpha^\delta-x^\dag}&\leq C_1 \delta\alpha^{- \frac{a}{2(a+s)}}+C_2\,\alpha ^{\frac{u}{2(a+s)}}&\\
&= C_1 \delta^{\frac{u}{a+u}} \norm{x^\dag}^{\frac{a}{a+u}}+C_2\delta^{\frac{u}{a+u}}\norm{x^\dag}^{\frac{-u}{a+u}}&\\
&= \left(
C_1\norm{x^\dag}^{\frac{a}{a+u}}+C_2\norm{x^\dag}^{\frac{-u}{a+u}}\right)\delta^{\frac{u}{a+u}}\\
&\doteq C\,\delta^{\frac{u}{a+u}}.
\end{align*}
\end{proof}

In the table below and in Figure \ref{fig:valores de u} we
illustrate the restrictions on the parameter $u$ and the source
condition for $x^\dag$ guaranteeing the order of convergence given
in (\ref{eq:orden de conv theorem 8.23}). These results where
obtained in Theorems \ref{teo:ordenes de convergencia},
\ref{teo:ordenes de convergencia ext} and \ref{teo: T y L
conmutan} respectively. \vspace{0.5cm}
\begin{center}
   \begin{tabular}{| l | c |}
    \hline
     Source condition& Restriction on $u$ \\ \hline
 \textcolor[rgb]{0.00,0.00,1.00}{
$x^\dag\in\X_u$} & \textcolor[rgb]{0.00,0.00,1.00}{
$0\leq u\leq a+2s$} \\
\hline \textcolor[rgb]{0.12,0.38,0.14}{
$x^\dag\in\R\left(\left(L^{-2s}T^*T|_{\X_s}\right)^\frac{u-s}{2(a+s)}\right)$}
&
\textcolor[rgb]{0.12,0.38,0.14}{ $s< u\leq 2\mu_0(a+s)-a$} \\
\hline
     \textcolor[rgb]{1,0,0}{
$x^\dag\in\R\left((B^*B)^\frac{u}{2(a+s)}\right)$}
\textcolor[rgb]{1,0,0}{\text{and }$L^{-s}T^*T=T^*TL^{-s}$}
&\vspace{0.1cm}\textcolor[rgb]{1.00,0.00,0.00}{
$0\leq u\leq 2\mu_0(a+s)$}\vspace{-0.1cm}\\
\hline
   \end{tabular}
 \end{center}
\begin{figure}[H]
\psset{xunit=2cm, yunit=2cm}
\begin{pspicture}(2.5,-0.1)(-0.5,0.5)
\psline{|->}(0,0)(6,0) \rput[c](0,-0.2){{\small $0$}}
\psline{-|}(0,0)(1,0) \psline{-|}(0,0)(3,0) \psline{-|}(0,0)(5.5,0)
\psline[linewidth=2pt,linecolor=blue](0,0.1)(1,0.1)\textcolor[rgb]{0.00,0.00,1.00}{\rput[c](1.05,-0.2){{\small
$a+2s$}}}
\psline[linewidth=2pt,linecolor=green!50!black](0.2,0.2)(3,0.2)
\textcolor[rgb]{0.12,0.38,0.14}{\rput[c](3,-0.2){{\small
$2\mu_0(a+s)-a$}}}
\psline{-|}(0,0)(0.2,0)
\rput[c](0.2,-0.2){{\small
$s$}}

\psline[linewidth=2pt,linecolor=red](0,0.3)(5.5,0.3)\textcolor[rgb]{1.00,0.00,0.00}{\rput[c](5.5,-0.2){{\small
$2\mu_0(a+s)$}}}
\end{pspicture}
\caption{Possible values of the parameter ``$u$''.}\label{fig:valores de u}
\end{figure}
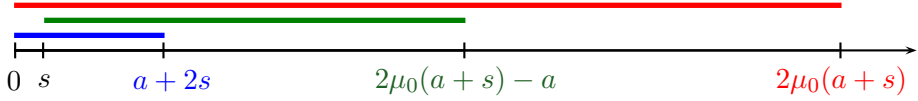
\vspace{1cm}
In the following proposition, a relation between the source sets
of Theorems \ref{teo:ordenes de convergencia} and \ref{teo:ordenes
de convergencia ext} is shown.
\begin{prop}\label{lem:relacion conj fuentes}
Let $\X,\,\Y,\,T,\,L$, $(\X_t)_{t\in\mathbb
R}$, $\,s\geq 0,\,a>0$ and $B= TL^{-s}$, all as in
Theorem \ref{teo:ordenes de convergencia ext}. Then, for every
$u\in[s, a+2s]$ there holds
\begin{equation}\label{eq:inclusion I1}
\X_u\subset\R\left(\left(L^{-2s}T^*T_{|_{\X_s}}\right)^{\frac{u-s}{2(a+s)}}\right),
\end{equation}
For $u=a+2s$ the inclusion in (\ref{eq:inclusion I1}) is in fact
an equality.
\end{prop}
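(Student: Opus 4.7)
The plan is to establish the stronger equality
$$\X_u=\R\!\left(\bigl(L^{-2s}T^*T_{|_{\X_s}}\bigr)^{\frac{u-s}{2(a+s)}}\right)$$
for every $u\in[s,a+2s]$, by transporting the range characterization of $(B^*B)^\nu$ from Theorem \ref{theorem:cota B*B} through the natural isometric isomorphism $L^s:\X_s\to\X$. The inclusion (\ref{eq:inclusion I1}) and the equality at the endpoint $u=a+2s$ then follow as immediate corollaries.

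First, I would set $A\doteq L^{-2s}T^*T_{|_{\X_s}}$. As observed in the proof of Theorem \ref{teo:ordenes de convergencia ext}, the adjoint of $T_{|_{\X_s}}:(\X_s,\norm{\cdot}_s)\to\Y$ is $T^\sharp=L^{-2s}T^*$, hence $A=T^\sharp T_{|_{\X_s}}$ is bounded, self-adjoint and nonnegative on $\X_s$. Second, I would verify the key similarity: by Proposition \ref{prop:8.19} ii), $L^s$ extends to an isometric isomorphism from $\X_s$ onto $\X$, and a direct computation on $\mathcal M$ gives
$$L^s A x = L^s L^{-2s}T^*Tx = L^{-s}T^*T L^{-s}(L^s x)=B^*B(L^s x),$$
so that $A=L^{-s}(B^*B)L^s$. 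Since $L^s$ is a unitary equivalence of $A$ on $\X_s$ with $B^*B$ on $\X$, functional calculus is preserved, yielding
$$A^\nu=L^{-s}(B^*B)^\nu L^s\qquad\text{and}\qquad \R(A^\nu)=L^{-s}\bigl(\R((B^*B)^\nu)\bigr)$$
for every $\nu\geq 0$.

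Third, I would apply Theorem \ref{theorem:cota B*B} with the choice $\nu\doteq\frac{u-s}{a+s}\in[0,1]$, giving
$$\R\!\left((B^*B)^{\frac{u-s}{2(a+s)}}\right)=\X_{u-s}.$$
Then, by Proposition \ref{prop:8.19} ii), $L^{-s}$ is a bijection from $\X_{u-s}$ onto $\X_u$, so
$$\R\!\left(A^{\frac{u-s}{2(a+s)}}\right)=L^{-s}(\X_{u-s})=\X_u,$$
establishing (\ref{eq:inclusion I1}) with equality throughout the range $u\in[s,a+2s]$; in particular the equality at $u=a+2s$ is included.

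The only mildly delicate point is justifying the identity $A^\nu=L^{-s}(B^*B)^\nu L^s$ rigorously, since $L^s$ is unbounded on $\X$. However, once one observes that $L^s:\X_s\to\X$ is an isometric isomorphism of Hilbert spaces intertwining $A$ with $B^*B$ on a common core (the set $\mathcal M$), the spectral theorem on $\X_s$ is transferred to that on $\X$ and the identity for fractional powers is immediate. This is the step that carries the main content; everything else amounts to bookkeeping with the range identities from Theorem \ref{theorem:cota B*B} and Proposition \ref{prop:8.19}.
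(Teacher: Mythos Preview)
Your proof is correct, and in fact establishes the stronger statement that equality holds in (\ref{eq:inclusion I1}) for every $u\in[s,a+2s]$, not just at the endpoint. Your route differs from the paper's: rather than redoing a Heinz-type argument from scratch for the operator $A=T^\sharp T_{|_{\X_s}}$ on $\X_s$, you observe that $A$ is unitarily equivalent to $B^*B$ on $\X$ via the isometric isomorphism $L^s:\X_s\to\X$, and then simply transport the already-proved range identity $\R\bigl((B^*B)^{\nu/2}\bigr)=\X_{\nu(a+s)}$ from Theorem \ref{theorem:cota B*B}. The paper instead first derives the norm equivalence $\|A^{-1/2}x\|_s\sim\|x\|_{a+2s}$ by duality, then invokes the Heinz inequality to compare fractional powers of $A^{-1/2}$ and a power of $L$, obtaining an identity of the form $\X_u=\R\bigl(A^{u/(2(a+2s))}\bigr)$; since $\frac{u}{2(a+2s)}\ge \frac{u-s}{2(a+s)}$ on $[s,a+2s]$, the monotonicity of ranges of powers then gives only the inclusion into $\R\bigl(A^{(u-s)/(2(a+s))}\bigr)$, with equality recovered separately at $u=a+2s$ where the two exponents coincide. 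Your argument is shorter, avoids that extra inclusion step, and uses Theorem \ref{theorem:cota B*B} as a black box; the paper's approach is more self-contained in that it reproves the Heinz step, but as written it delivers a weaker conclusion than yours.
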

\begin{proof}
Let $T^\sharp=L^{-2s}T^*$ the adjoint of the operator
$T_{|_{\X_s}}$ as defined in (\ref{eq:op T restringido}). Then,
for every $x\in\X_s$ we have that
\begin{equation*}
\norm{\left(T^\sharp T_{|_{\X_s}}\right)^{1/2}x}^2_s= \left\langle T^\sharp T_{|_{\X_s}} x,x \right\rangle_s
= \norm{T_{|_{\X_s}}x}^2.
\end{equation*}
From this equality and (\ref{eq:LT suavidad}) it follows that
\begin{equation}\label{eq:3-lema relacion fuentes}
 m\norm{x}_{-a}   \leq  \norm{ (T^\sharp T_{|_{\X_s}})^{1/2}x}_s \leq  M\norm{x}_{-a} \hspace{0.4cm}\forall \,\,x\in\X_s.
\end{equation}

On the other hand, note that
\begin{align}
\D((T^\sharp T_{|_{\X_s}})^{-1/2})&=\R((T^\sharp T_{|_{\X_s}})^{1/2})\nonumber\\
&=\R(T^\sharp)\nonumber\\
&=\R(L^{-2s}T^*)\nonumber \\
&=\X_{a+2s},\label{eq:igualdad dominios para H}
\end{align}
where the last equality follows immediately from Lemma
\ref{lem:R(T*)=Xa}.

Now, using (\ref{eq:3-lema relacion fuentes}), (\ref{eq:igualdad
dominios para H}) and a duality argument it follows easily that
\begin{equation}\label{eq:4-lema relacion fuentes}
\frac{1}{M}\norm{x}_{a+2s}\leq\norm{(T^\sharp T_{|_{\X_s}})^{-1/2}x}_s\leq
\frac{1}{m}\norm{x}_{a+2s} \hspace{0.3cm}\forall\,\,x\in \X_{a+2s}.
\end{equation}
From (\ref{eq:igualdad dominios para H}) and (\ref{eq:4-lema
relacion fuentes}), the use of Heinz inequality (Theorem
\ref{teo:desigualdad de Heinz}) for the operators  $L^{a+2s}$ and
$(T^\sharp T_{|_{\X_s}})^{-1/2}$ allows us to conclude that for
every $\nu\in[0,1]$ there holds:
\begin{equation}\label{eq:inclusion dominios nu Heinz}
\D\left(L^{\nu(a+2s)}\right)=\D\left((T^\sharp T_{|_{\X_s}})^{-\nu/2}\right)
\end{equation}
and
\begin{equation*}
{M^{-\nu}}\norm{L^{\nu(a+2s)}x}\leq\norm{(T^\sharp T_{|_{\X_s}})^{-\nu/2}x} \leq {m^{-\nu}}
\norm{L^\nu{(a+2s)}x}\hspace{0.5cm}\forall\,\,x\in\D\left(L^{\nu(a+2s)}\right).
\end{equation*}

Finally we have that
\begin{align*}
\X_u& =\D(L^{u})&\\
&=\D((L^{a+2s})^\nu)& \left(\text{with }\nu\doteq\frac{u}{a+2s}\right)\\
&=\D((T^\sharp T_{|_{\X_s}})^{\frac{-u}{2(a+2s)}})&\left(\text{by (\ref{eq:inclusion dominios nu Heinz}) with }\nu\doteq\frac{u}{a+2s}\right)\\
&=\R((T^\sharp T_{|_{\X_s}})^{\frac{u}{2(a+2s)}})&\\
&\subset \R((T^\sharp T_{|_{\X_s}})^{\frac{u-s}{2(a+s)}}) &\left(\text{since }0\leq\frac{u-s}{a+s}\leq\frac{u}{a+2s}\right)\\
&= \R((L^{-2s}T^*T_{|_{\X_s}})^{\frac{u-s}{2(a+s)}}),&
\end{align*}
which proves the first part of the lemma.

For the second part, note that if $u=a+2s$ then
\begin{align*}
\X_u&=\X_{a+2s}&\\
&=\D((T^\sharp T_{|_{\X_s}})^{-1/2})&(\text{by (\ref{eq:igualdad dominios para H})})\\
&=\R((T^\sharp T_{|_{\X_s}})^{1/2})&\\
&=\R((T^\sharp T_{|_{\X_s}})^{\frac{u-s}{2(a+s)}})&\\
&=\R((L^{-2s}T^*T_{|_{\X_s}})^{\frac{u-s}{2(a+s)}}).&
\end{align*}
This completes the proof of the lemma.
\end{proof}

It is worth noting that the inclusion in (\ref{eq:inclusion I1})
reveals that the source condition $x^\dag
\in\R\left((L^{-2s}T^*T_{|_{\X_s}})^\frac{u-s}{2(a+s)}\right)$ in
Theorem \ref{teo:ordenes de convergencia ext} is less restrictive
than the source condition $x^\dag\in\X_u$ of Theorem
\ref{teo:ordenes de convergencia} for values of $u \in [s,\,
a+2s]$. Therefore, the latter theorem can now be seen as a
corollary of Theorem \ref{teo:ordenes de convergencia ext}.
Moreover, note that since $\mu_{\scriptscriptstyle 0}\geq 1$,
Theorem \ref{teo:ordenes de convergencia ext} is valid for $u$ in
a set which is larger than the one for which  Theorem
\ref{teo:ordenes de convergencia} holds. In light of this
observation it is then reasonable to question the relevance of
Theorem \ref{teo:ordenes de convergencia}. The answer to this
questioning is immediately answered by observing that the source
condition $x^\dag\in\X_u$, although less restrictive than  the
condition $x^\dag
\in\R\left((L^{-2s}T^*T_{|_{\X_s}})^\frac{u-s}{2(a+s)}\right)$ is,
in general, easier to verify since it involves only the operator
$L$ while the second involves both $L$ and $T$. On the other hand,
if the operators $L^{-1}$ and $T^*T$ commute, then there exist
close connections between the source conditions in Theorems
\ref{teo:ordenes de convergencia}, \ref{teo:ordenes de
convergencia ext} and \ref{teo: T y L conmutan}. We shall
establish these connections in Corollary \ref{lem: T y L conmutan
Xmu}. An extension of the second part of Theorem \ref{theorem:cota
B*B}, namely identity (\ref{eq:igualdad de rango BestrellaB en
teorema}) for values of $\nu>1$, will be previously needed. We
shall obtain such extension in Theorem \ref{theorem:extension
theorem B*B}. A few previous results, which are presented in the
next three lemmas, will be needed.

\begin{lem}\label{lem:X2a en rango}
Let $\X$, $\Y$, $T$, $L$, $(\X_t)_{t\in\mathbb R}$ and $s\geq 0$,
all as in Proposition \ref{lem:relacion conj fuentes}. Suppose
also that there exist positive constants $m,M$ with $0<m\leq
M<\infty$ and $a\in\mathbb R^+$ such that (\ref{eq:LT suavidad})
holds, i.e.
\begin{equation}
m\norm{x}_{-a}\leq\norm{Tx}\leq M \norm{x}_{-a}\hspace{0.3cm}\forall\,\, x\in\X,
\end{equation} then:
\begin{enumerate}[i)]
\item $\R(T^*T)\subset \X_{2a}$.
\item $\mathcal R(L^{-2s}T^*T) \subset\X_{2(a+s)}$.
\end{enumerate}
If $L^{-1}$ and $T^\ast T$ commute, then equality holds in both
inclusions above.
\end{lem}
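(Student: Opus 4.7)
The plan is to write $T^*T = T^*\circ T$ and use Lemma \ref{lem:R(T*)=Xa} so as to extract two factors of $L^{-a}$. By that lemma $\R(T^*)=\X_a=\R(L^{-a})$, and since $\X_a\hookrightarrow\X$ is continuous, the closed graph theorem upgrades this to boundedness of $T^*\colon\Y\to\X_a$, equivalently boundedness of $L^aT^*\colon\Y\to\X$. The factorization
\[
T^*T \;=\; L^{-a}\,(L^aT^*\,T)
\]
then already gives the weaker inclusion $\R(T^*T)\subset\R(L^{-a})=\X_a$. The crucial step for part (i) is to improve this by showing that the bounded operator $A\doteq L^aT^*T\colon\X\to\X$ in fact has range in $\X_a=\R(L^{-a})$. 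Once this is done, we may write $A=L^{-a}\tilde A$ for some bounded $\tilde A\colon\X\to\X$, so that $T^*T=L^{-2a}\tilde A$ and hence $\R(T^*T)\subset\R(L^{-2a})=\X_{2a}$.

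Granting (i), part (ii) follows immediately from Proposition \ref{prop:8.19}(ii): $L^{-2s}$ is an isomorphism from $\X_{2a}$ onto $\X_{2(a+s)}$ and is bounded on $\X$, so
\[
\R(L^{-2s}T^*T)\;=\;L^{-2s}\R(T^*T)\;\subset\;L^{-2s}\X_{2a}\;=\;\X_{2(a+s)}.
\]

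The equality statements under the commutation hypothesis $L^{-1}T^*T=T^*TL^{-1}$ are the cleanest piece. Commutation extends to every real power of $L$, so $T^*T$ and $L^{-2a}$ can be put in joint spectral form, and the two-sided estimate $m^2L^{-2a}\le T^*T\le M^2L^{-2a}$ (which is equivalent to (\ref{eq:LT suavidad})) allows us to write $T^*T=L^{-2a}H$ with $H$ a bounded self-adjoint operator on $\X$ whose spectrum is contained in $[m^2,M^2]$; in particular $H$ is invertible with bounded inverse. Then $\R(T^*T)=L^{-2a}H(\X)=L^{-2a}(\X)=\X_{2a}$, and the same calculation with an extra factor $L^{-2s}$ in front gives $\R(L^{-2s}T^*T)=\X_{2(a+s)}$.

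The main obstacle is the non-commutative upgrade step flagged above — namely, proving that $A\colon\X\to\X_a$ rather than merely $A\colon\X\to\X$ — since this is where the second factor of $L^{-a}$ must be produced without the use of commutation. A natural route is Heinz's inequality (Theorem \ref{teo:desigualdad de Heinz}) applied to the comparable pair $(T^*T)^{1/2}$ and $L^{-a}$, pushing the interpolation exponent up to $\nu=1$; this is also the circle of ideas needed for the announced extension of identity (\ref{eq:igualdad de rango BestrellaB en teorema}) of Theorem \ref{theorem:cota B*B} to values $\nu>1$ in the forthcoming Theorem \ref{theorem:extension theorem B*B}.
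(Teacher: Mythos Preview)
Your derivation of (ii) from (i) is correct and matches the paper. Your treatment of the equality statements under commutation via a joint spectral decomposition is also correct; the paper instead argues more directly, using $\R((T^*T)^{1/2})=\X_a$ twice and interleaving with the commutation hypothesis to factor any $x\in\X_{2a}$ as $T^*Tw$, but both routes are sound.

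The genuine gap is in part (i), and you have located it yourself: you need $A=L^aT^*T$ to land in $\X_a$, yet you do not prove this. Your suggested route via Heinz does not close the gap. Theorem~\ref{teo:desigualdad de Heinz} applied to the comparable pair $(T^*T)^{1/2}$ and $L^{-a}$ (or to their inverses) only gives information for exponents $\nu\in[0,1]$; at $\nu=1$ it merely restates the known fact $\R((T^*T)^{1/2})=\X_a$, while what you need is the $\nu=2$ conclusion $\R(T^*T)\subset\X_{2a}$. The extension of Heinz beyond $\nu=1$ is precisely Lemma~\ref{lem:extension heinz dominios}, and the extension of identity~(\ref{eq:igualdad de rango BestrellaB en teorema}) that you invoke is Theorem~\ref{theorem:extension theorem B*B}; both of these \emph{require} the commutation hypothesis $L^{-1}T^*T=T^*TL^{-1}$. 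So the ``natural route'' you sketch would only deliver the commutative equality case, which you have already handled separately, and leaves the non-commutative inclusion unproved.

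The paper's argument for the bare inclusion (i) is entirely different and does not pass through Heinz at all. It is a weak-compactness argument: starting from $x\in\R(T^*T)\subset\X_a$, one approximates $x$ in the $\|\cdot\|_a$-norm by a sequence $\{x_n\}\subset\X_{2a}$, uses the upper bound in~(\ref{eq:LT suavidad}) to see that $\|TL^{2a}x_n\|\le M\|L^{2a}x_n\|_{-a}=M\|x_n\|_a$ is bounded, extracts a weakly convergent subsequence $TL^{2a}x_n\rightharpoonup y$, and then appeals to closedness of the operator $TL^{2a}$ to conclude $x\in\D(L^{2a})=\X_{2a}$. This density-plus-compactness idea is what is missing from your proposal.
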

\begin{proof}
To prove \textit{i)}, observe that since $\mathcal M\subset
\X_t\,\,\forall\,t$ and $\overline{\mathcal M}^{\norm{\cdot}_t}=\X_t$,
it follows immediately that $\overline{\mathcal
X_{2a}}^{\norm{\cdot}_a}=\X_a$. Suppose now that $x\in\R(T^*T)$. Then
from Lemma \ref{lem:R(T*)=Xa}, $x\in\X_a$. Hence, there exists a
sequence $\{x_n\}\subset\X_{2a}$ such that
$\norm{x_n-x}_a\rightarrow 0$ and therefore also
$\norm{x_n-x}\rightarrow 0$. Then $\norm{TL^{2a}x_n}\leq
M\norm{L^{2a}x_n}_{-a}\leq M\norm{L^ax_n}<\infty$. Thus, the
sequence $\{TL^{2a}x_n\}$ is bounded in $\Y$ and therefore there
exist $y\in\Y$ and a subsequence of $\{x_n\}$ (also denoted by
$\{x_n\}$) such that $TL^{2a}x_n\overset{w}{\longrightarrow}y$.
Finally, since the operator $TL^{2a}$ is closed, we have that
$x\in\D(L^{2a})=\X_{2a}$ and, moreover, $TL^{2a}x=y$. Thus
$\R(T^*T) \subset \X_{2a}$, which proves \textit{i)}.

Suppose now that $L^{-1}$ and $T^\ast T$ commute and let
$x\in\X_{2a}$. Then $L^a x\in \X_a=\R((T^\ast T)^{1/2}))$ (see
Lemma \ref{lem:R(T*)=Xa}), and therefore there exists $x_1\in\X$
such that $L^a x=(T^\ast T)^{1/2} x_1$. Then $x=L^{-a}(T^\ast
T)^{1/2} x_1=(T^\ast T)^{1/2} L^{-a} x_1$, where the last equality
holds by virtue of the commutativity of $L^{-1}$ and $T^\ast T$.
Now, since $L^{-a} x_1\in \X_a=\R((T^\ast T)^{1/2})$, it follows
that there exists $w\in \X$ such that $L^{-a}x_1=(T^\ast T)^{1/2}
w$. Finally then $x=T^\ast T w\in \R(T^\ast T)$ and hence,
equality holds in \textit{i)}.

To prove \textit{ii)}, let $x\in\mathcal R(L^{-2s}T^*T)$. Then
there exists $x_\ce\in\X$ such that $L^{-2s}T^*Tx_\ce=x.$ But from
\textit{i)} it follows that $T^*Tx_\ce\in\X_{2a}$ and therefore
$L^{-2s}T^*Tx_\ce\in\X_{2(a+s)}$. On the other hand if $L^{-1}$
and $T^\ast T$ commute and $x\in\X_{2(a+s)}\subset \X_{2s}$, then
there exists $L^{2s}x$ and $L^{2s}x\in\X_{2a}$. Since in this case
equality in \textit{i)} holds, it then follows that
$L^{2s}x\in\mathcal R(T^*T)$. Hence there exists $x_\ce\in\X$ such
that $L^{2s}x=T^*Tx_\ce$, and therefore
$x=L^{-2s}T^*Tx_\ce\in\R(L^{-2s}T^*T)$. This concludes the proof
of the lemma.
\end{proof}
\begin{lem}\label{coro:dominios}
Let $\X$, $\Y$, $T$, $L$, $(\X_t)_{t\in\mathbb R}$, $s\geq 0$,
$a>0$, $m$, $M$, all as in Lemma \ref{lem:X2a en rango}, and
$B=TL^{-s}$ as in Theorem \ref{coro:coro de ordenes de
convergencia}. If $L^{-1}$ and $T^*T$ commute then:
\begin{enumerate}[i)]
\item $\mathcal R(B^*B)=\X_{2(a+s)}$.
\item $m^{2}\,\|x\|_{-2(a+s)}\leq\|B^*Bx\|\leq M^{2}\,\|x\|_{-2(a+s)}\hspace{0.3cm}\forall
\,\,x\in\X$.
\item $M^{-2}\,\|x\|_{2(a+s)}\leq\|(B^*B)^{-1}x\|\leq m^{-2}\,\|x\|_{2(a+s)}\hspace{0.3cm}\forall
\,\,x\in\mathcal X_{2(a+s)}$.
\end{enumerate}
\end{lem}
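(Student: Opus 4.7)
The plan is to leverage the commutativity of $L^{-1}$ and $T^*T$: it forces $L^{-s}$ and every function of $B^*B$ to commute, so one may freely shuffle factors of $L^{\pm(a+s)}$ past $(B^*B)^{\pm 1/2}$. With this, (ii) and (iii) become the $\nu=2$ extensions of the inequalities in (\ref{eq:cota B*B}), while (i) is immediate: commutativity yields $B^*B=L^{-s}T^*TL^{-s}=L^{-2s}T^*T$, and the equality version of Lemma \ref{lem:X2a en rango}(ii) in the commutative case gives $\R(B^*B)=\R(L^{-2s}T^*T)=\X_{2(a+s)}$.

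For (ii) the plan is to write $\norm{B^*Bx}=\norm{(B^*B)^{1/2}[(B^*B)^{1/2}x]}$ and apply Theorem \ref{theorem:cota B*B} with $\nu=1$ twice. The first application gives
\begin{equation*}
m\,\norm{(B^*B)^{1/2}x}_{-(a+s)}\le \norm{B^*Bx}\le M\,\norm{(B^*B)^{1/2}x}_{-(a+s)}.
\end{equation*}
Since $L^{-(a+s)}$ commutes with $(B^*B)^{1/2}$ (spectral calculus for the commuting self-adjoint operators $L^{-1}$ and $T^*T$), one has $\norm{(B^*B)^{1/2}x}_{-(a+s)}=\norm{L^{-(a+s)}(B^*B)^{1/2}x}=\norm{(B^*B)^{1/2}L^{-(a+s)}x}$, which a second application of the same inequality sandwiches between $m\norm{L^{-(a+s)}x}_{-(a+s)}=m\norm{x}_{-2(a+s)}$ and $M\norm{x}_{-2(a+s)}$. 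Concatenating the two applications proves (ii).

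For (iii) the strategy is symmetric, now using the companion inequality for $(B^*B)^{-1/2}$. For $x\in\X_{2(a+s)}\subset\X_{a+s}=\D((B^*B)^{-1/2})$, commutativity forces $(B^*B)^{-1/2}x\in\X_{a+s}$ as well (so that $(B^*B)^{-1/2}$ can be applied a second time), allowing us to decompose $(B^*B)^{-1}x=(B^*B)^{-1/2}[(B^*B)^{-1/2}x]$. Applying Theorem \ref{theorem:cota B*B} with $\nu=1$ twice and rewriting $\norm{(B^*B)^{-1/2}x}_{a+s}=\norm{(B^*B)^{-1/2}L^{a+s}x}$ between the applications then yields $M^{-2}\norm{x}_{2(a+s)}\le\norm{(B^*B)^{-1}x}\le m^{-2}\norm{x}_{2(a+s)}$. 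The only delicate point throughout is the domain bookkeeping---verifying at each step that the intermediate vector lies in the scale space required by the next application---and this is exactly where the commutativity of $L^{-1}$ and $T^*T$ (equivalently, of $L^{\pm(a+s)}$ and $(B^*B)^{\pm 1/2}$) becomes indispensable.
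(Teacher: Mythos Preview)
Your argument is correct. Part \textit{i)} is handled exactly as in the paper. For \textit{ii)} the paper takes a closely related but slightly different decomposition: it writes $B^*Bx=(T^*T)^{1/2}(T^*T)^{1/2}L^{2a}L^{-2(a+s)}x$ and applies the base inequality (\ref{eq:LT suavidad}) twice, commuting $L^{-a}$ past $(T^*T)^{1/2}$ in between, whereas you factor through $(B^*B)^{1/2}$ and invoke Theorem~\ref{theorem:cota B*B} with $\nu=1$ twice, commuting $L^{-(a+s)}$ past $(B^*B)^{1/2}$. The two routes are equivalent in spirit; yours hides the Heinz-type work inside the cited theorem, the paper's stays closer to the primitive estimate on $T$. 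The genuine difference is in \textit{iii)}: the paper does \emph{not} iterate the companion inequality for $(B^*B)^{-1/2}$ but instead derives \textit{iii)} from \textit{ii)} by a duality argument, writing $\norm{(B^*B)^{-1}x}=\sup_{\norm{\bar x}=1}\abs{\langle x,(B^*B)^{-1}\bar x\rangle}$, substituting $z=(B^*B)^{-1}\bar x$, and using \textit{ii)} to control the constraint $\norm{B^*Bz}\le 1$. Your iterative approach is more symmetric with \textit{ii)} and avoids the duality computation at the cost of the domain bookkeeping you flag; the paper's duality argument needs no such bookkeeping but requires \textit{ii)} as input.
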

\begin{proof}
Note that \textit{i)} follows immediately from the previous lemma.
To prove \textit{ii)} observe that for every $x\in\X$ we have
\begin{eqnarray*}
B^*Bx&=&L^{-2s}T^*Tx\\
&=& T^*TL^{-2s}x\\
&=&T^*TL^{2a}L^{-2a}L^{-2s}x\\
&=&(T^*T)^{1/2}(T^*T)^{1/2}L^{2a}L^{-2(a+s)}x.
\end{eqnarray*}
Thus
\begin{eqnarray*}
\norm{B^*Bx}&=&\norm{(T^*T)^{1/2}(T^*T)^{1/2}L^{2a}L^{-2(a+s)}x}\\
&=&\norm{T(T^*T)^{1/2}L^{2a}L^{-2(a+s)}x}\\
&\leq&M\norm{(T^*T)^{1/2}L^{2a}L^{-2(a+s)}x}_{-a}\hspace{1.5cm}(\text{from (\ref{eq:LT suavidad})})\\
&=&M\norm{(T^*T)^{1/2}L^{a}L^{-2(a+s)}x}\\
&=&M\norm{TL^{a}L^{-2(a+s)}x}\\
&\leq&M^2\norm{L^{a}L^{-2(a+s)}x}_{-a}\hspace{3cm} (\text{from (\ref{eq:LT suavidad})})\\
&=& M^2\norm{x}_{-2(a+s)}.
\end{eqnarray*}
Similarly, by using the inequality $m\norm{x}_{-a}\leq\norm{Tx}$,
it follows that $m^2\norm{x}_{-2(a+s)}\leq \norm{B^*Bx}$. This
completes the proof of \textit{ii)}.

To prove \textit{iii)} let $x\in\X_{2(a+s)}$. Then
\begin{eqnarray*}
\norm{(B^*B)^{-1}x}&=& \sup _{\underset{\norm {\bar x}= 1}{\bar
x\in\X_{2(a+s)}}}|\langle(B^*B)^{-1}x,\bar x\rangle|\\
&=&\sup _{\underset{\norm {\bar x}= 1}{\bar
x\in\X_{2(a+s)}}}|\langle x, (B^*B)^{-1}\bar x\rangle|\\
&=& \sup _{\underset{\norm{B^*B z}\leq 1}{z\in\X}}|\langle x, z\rangle|\\
&\leq& \sup _{\underset{m^2\norm{z}_{-2(a+s)}\leq 1}{z\in\X}}|\langle x, z\rangle|\hspace{1.8cm}(\text{from \textit{ii)}})\\
&=& \sup _{\underset{\norm{w}\leq 1}{w\in\X_{2(a+s)}}}|\langle x, m^{-2}L^{2(a+s)}w\rangle|\\
&=& m^{-2}\sup _{\underset{\norm{w}\leq 1}{w\in\X_{2(a+s)}}}|\langle L^{2(a+s)} x, w\rangle|\\
&=& m^{-2}\norm{x}_{2(a+s)}.\\
\end{eqnarray*}
A similar reasoning allows us to prove that
$M^{-2}\norm{x}_{2(a+s)}\leq \norm{(B^*B)^{-1}x}$. This concludes
the proof of the lemma.
\end{proof}

In the following lemma it is proved that, under the hypothesis of
commutativity of the operators $A$ and $L$, Heinz inequality
(Theorem \ref{teo:desigualdad de Heinz}) is also valid for
$\nu>1$.
\begin{lem}\label{lem:extension heinz dominios}
Let $A$ and $L$ be two unbounded, self-adjoint, strictly positive
operators on a Hilbert space $\X$. Suppose also that
$\D(A)\subset\D(L)$, $A$ and $L$ commute on $\D(A)$ and
$\norm{Lx}\leq \norm{Ax}$ for every $x\in\D(A)$. Then for every
$k\geq 0$ it follows that $\D(A^k)\subset\D(L^k)$ and
$\norm{L^kx}\leq \norm{A^kx}$ for every $x\in\D(A^k)$.
\end{lem}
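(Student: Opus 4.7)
The strategy has two stages: first prove the conclusion for every positive integer $n$ by induction, exploiting the commutativity hypothesis; then extend to every real $k\geq 0$ by applying Heinz's inequality (Theorem \ref{teo:desigualdad de Heinz}) to the pair $(A^n,L^n)$ for $n\geq k$.

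\textbf{Stage 1 (integer case).} I would proceed by induction on $n\in\N$, the case $n=1$ being precisely the hypothesis. Assume $\D(A^n)\subset\D(L^n)$ and $\|L^n x\|\leq\|A^n x\|$ for every $x\in\D(A^n)$. Given $x\in\D(A^{n+1})$, set $y\doteq Ax\in\D(A^n)$; the induction hypothesis yields $y\in\D(L^n)$ together with $\|L^n y\|\leq\|A^n y\|=\|A^{n+1}x\|$. The commutation $AL=LA$ on $\D(A)$ propagates by iteration (and the closedness of $A$) to the identity $L^n A x=A L^n x$ on $\D(A^{n+1})$, which forces $L^n x\in\D(A)\subset\D(L)$, so $x\in\D(L^{n+1})$. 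Applying the base hypothesis to $z\doteq L^n x\in\D(A)$ then gives
$$\|L^{n+1}x\|=\|Lz\|\leq\|Az\|=\|AL^n x\|=\|L^n Ax\|\leq\|A^{n+1}x\|,$$
completing the induction.

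\textbf{Stage 2 (real case).} Given $k\geq 0$, choose an integer $n\geq k$. The operators $A^n$ and $L^n$ are unbounded, densely defined, self-adjoint, and strictly positive by the spectral theorem applied to $A$ and $L$, and Stage 1 guarantees $\D(A^n)\subset\D(L^n)$ with $\|L^n x\|\leq\|A^n x\|$ on $\D(A^n)$. Heinz's inequality applied to $(A^n,L^n)$ with exponent $\nu\doteq k/n\in[0,1]$ then delivers $\D((A^n)^{k/n})\subset\D((L^n)^{k/n})$ and $\|(L^n)^{k/n}x\|\leq\|(A^n)^{k/n}x\|$; the functional-calculus identities $(A^n)^{k/n}=A^k$ and $(L^n)^{k/n}=L^k$ conclude the proof.

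\textbf{Main obstacle.} The delicate step is Stage 1, specifically the verification that $L^n x\in\D(A)$, equivalently that $L^n$ commutes with $A$ on $\D(A^{n+1})$. I would establish this via an auxiliary induction confirming the invariance $L\D(A^j)\subset\D(A^{j-1})$, which is a direct consequence of $AL=LA$ on $\D(A)$ together with the closedness of $A$; alternatively one may read both invariance and commutation off the joint spectral decomposition of the commuting self-adjoint pair $(A,L)$, through which the whole theorem reduces to the pointwise inequality $\mu^k\leq\lambda^k$, valid wherever $\gamma\leq\mu\leq\lambda$.
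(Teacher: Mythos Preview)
Your proposal is correct and follows essentially the same route as the paper: an induction on integers exploiting the commutation $AL=LA$ to show $\D(A^n)\subset\D(L^n)$ with $\|L^nx\|\le\|A^nx\|$, followed by an application of Heinz's inequality to the pair $(A^n,L^n)$ with exponent $\nu=k/n\in[0,1]$ (the paper takes $n=\lceil k\rceil$). Your identification of the delicate point---showing $L^nx\in\D(A)$ via the invariance $L\,\D(A^j)\subset\D(A^{j-1})$---is exactly the issue the paper handles (somewhat more implicitly) in its inductive step.
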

\begin{proof}
If $0\leq k\leq 1,$  the result is true by virtue of Heinz
inequality (Theorem \ref{teo:desigualdad de Heinz}). Suppose then
that $k>1$. We will prove first that the result is true for all
$k\in\mathbb N$, that is, we will first show, by induction, that
$\D(A^n)\subset \D(L^n)$ and $\norm{L^nx}\leq\norm{A^nx}\,\,
\forall\, x\in\D(A^n),\,\forall\, n\in\mathbb N$. For that let
$n=2$ and $x\in \D(A^2)$. Since
$x\in\D(A^2)\subset\D(A)\subset\D(L)$, there exists $w\doteq Lx$.
On the other hand, since $x\in\D(A^2)$, $Ax\in\D(A)\subset\D(L)$
and therefore there exists $z\doteq LAx$. Thus
$$z=LAx=ALx=Aw.$$
Then $w\in\D(A)\subset\D(L)$ and therefore there exists $r\in\X$
such that $r=Lw=L^2x$. Hence $x\in\D(L^2)$. We have proved that
$\D(A^2)\subset\D(L^2)$. Also, for $x\in\D(A^2)$ we have that
$\norm{L^2x}\leq\norm{ALx}=\norm{AL x}\leq \norm{A^2x}$.

Suppose now that $\D(A^n)\subset\D(L^n)$ and $\norm{L^nx}\leq
\norm{A^nx}\forall\,x\in \D(A^{n})$. We will prove the inclusion
for $n+1$. For that let $x\in\D(A^{n+1})$. Since
$\D(A^{n+1})\subset\D(A^n)\subset\D(L^n)$, there exists $w\doteq
L^n x$. On the other hand, $Ax\in\D(A^n)$ and by the inductive
hypothesis $\D(A^n)\subset \D(L^n)$. Then there exists $z\doteq
L^n Ax$. Thus
$$z=L^nAx= A L^n x = Aw.$$
Then $w\in\D(L)$ and therefore there exists $r\doteq
Lw=LL^nx=L^{n+1}x$. Hence $x\in\D(L^{n+1})$. Also, if
$x\in\D(A^{n+1})$ then $\norm{L^{n+1}x}=\norm{L^nLx}\leq
\norm{A^nLx}=\norm{LA^n x}\leq \norm{A^{n+1}x}$.

We have then proved that for every $n\in\mathbb N$
\begin{equation}\label{eq:inclusion dominios naturales}
\D(A^{n})\subset\D(L^n) \hspace{0.5cm}\text{and}\hspace{0.5cm}
\norm{L^nx}\leq \norm{A^nx} \hspace{0.5cm}\forall\,x\in\D(A^n).
\end{equation}

Suppose now that $k\in\mathbb R^+\setminus \mathbb N$ and define
$n\doteq \lceil k\rceil$ (where ``$\lceil\cdot\rceil$'' denotes
the ``ceiling" function). Since $n\in\mathbb N$, from
(\ref{eq:inclusion dominios naturales}) we have that
$\D(A^n)\subset\D(L^n)$ and $\norm{L^nx}\leq \norm{A^nx}$. Now, by
using Theorem \ref{teo:desigualdad de Heinz} with $L$ and $A$
replaced by $L^n$ and $A^n$ and $\nu\doteq \frac{k}{\lceil
k\rceil}$, it follows that $\D(A^\nu)\subset\D(L^\nu)$ and
$\norm{L^\nu x}\leq \norm{A^\nu x}\,\,\forall\,x\in\D(A^\nu)$,
that is
$$\D(A^k)\subset\D(L^k)\hspace{0.5cm}\text{and}\hspace{0.5cm}
\norm{L^kx}\leq \norm{A^kx} \hspace{0.5cm}\forall\,x\in\D(A^k).$$
\end{proof}
Having proved the three previous lemmas, we are now ready to prove
an extension of the identity (\ref{eq:igualdad de rango BestrellaB
en teorema}) of Theorem \ref{theorem:cota B*B}, which will allow
us to show the relationships between the source conditions of
Theorems \ref{teo:ordenes de convergencia} and \ref{teo: T y L
conmutan}, that is, conditions of the form $x^\dag\in\X_u$ and
$x^\dag \in\R\left((B^*B)^\frac{u}{2(a+s)}\right)$,  for the case
in which $L^{-1}$ y $T^*T$ commute.
\begin{thm}\label{theorem:extension theorem B*B}
Let $T:\X\longrightarrow\Y$ be a linear continuous operator
between the Hilbert spaces $\X$ and $\Y$, $L$ a linear, densely
defined, unbounded and strictly positive operator on $\X$, and
$(\X_t)_{t\in\mathbb R}$ the Hilbert scale induced by $L$ over
$\X$. Let also $s$ be a positive constant, $B=TL^{-s}$ and suppose
that there exist positive constants $a,\,m$ and $M$ such that
(\ref{eq:LT suavidad}) holds. Assume also that $L^{-1}$ and $T^*T$
commute. Then for every $\nu>0$ we have that
\begin{equation}\label{eq:igualdad de rango BestrellaB en teorema
ultimo}
\R\left((B^*B)^\frac{\nu}{2}\right)=\X_{\nu(a+s)}.
\end{equation}
\end{thm}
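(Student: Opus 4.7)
The plan is to reduce the claim to a two-sided norm equivalence between $(B^*B)^{-1}$ and $L^{2(a+s)}$, and then bootstrap from exponent one to arbitrary $\nu>0$ using the commuting-operator extension of the Heinz inequality (Lemma \ref{lem:extension heinz dominios}). Since $B^*B$ is injective (as follows from Lemma \ref{coro:dominios}~\textit{ii}), the functional calculus gives
\[
\R\bigl((B^*B)^{\nu/2}\bigr)=\D\bigl((B^*B)^{-\nu/2}\bigr),
\]
so it suffices to show $\D\bigl((B^*B)^{-\nu/2}\bigr)=\X_{\nu(a+s)}=\D\bigl(L^{\nu(a+s)}\bigr)$ for every $\nu>0$.

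First I would verify the structural hypotheses needed to invoke Lemma \ref{lem:extension heinz dominios}. The commutativity of $L^{-1}$ and $T^*T$ implies that $B^*B=L^{-2s}T^*T$ commutes with every power of $L$, hence $(B^*B)^{-1}$ commutes with $L^{2(a+s)}$ on the common domain $\X_{2(a+s)}$ (Lemma \ref{coro:dominios}~\textit{i}). Both operators are self-adjoint and strictly positive: $L^{2(a+s)}$ inherits strict positivity from $L$, while $(B^*B)^{-1}$ is bounded below by $\|B^*B\|^{-1}$ via spectral theory since $B^*B$ is bounded. Crucially, Lemma \ref{coro:dominios}~\textit{iii} provides the two-sided estimate
\[
m^{2}\,\|(B^*B)^{-1}x\|\le \|L^{2(a+s)}x\|\le M^{2}\,\|(B^*B)^{-1}x\|
\qquad \forall\,x\in\X_{2(a+s)}.
\]

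The key step is then to apply Lemma \ref{lem:extension heinz dominios} in both directions with $k=\nu/2$. Taking $A\doteq M^{2}(B^*B)^{-1}$ and using the right half of the above inequality yields $\D\bigl((B^*B)^{-\nu/2}\bigr)\subset \D\bigl(L^{\nu(a+s)}\bigr)=\X_{\nu(a+s)}$. Taking $A\doteq m^{-2}L^{2(a+s)}$ and using the left half yields the reverse inclusion $\X_{\nu(a+s)}\subset \D\bigl((B^*B)^{-\nu/2}\bigr)$. Combining gives the desired equality of domains, and hence of ranges, completing the proof.

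The main obstacle I anticipate is purely verificational rather than conceptual: checking that the hypotheses of Lemma \ref{lem:extension heinz dominios} (commutativity on the correct domain, strict positivity of $(B^*B)^{-1}$, and the domain inclusion $\D(A)\subset \D(L)$) are cleanly in force for both choices of $A$. Since $\D\bigl((B^*B)^{-1}\bigr)=\X_{2(a+s)}=\D\bigl(L^{2(a+s)}\bigr)$ by Lemma \ref{coro:dominios}~\textit{i}, the required inclusion is in fact an equality, which removes any difficulty on that front and makes the extension symmetric in the two operators.
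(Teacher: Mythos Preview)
Your proposal is correct and follows essentially the same route as the paper: establish $\D((B^*B)^{-1})=\X_{2(a+s)}$ together with the two-sided norm equivalence from Lemma~\ref{coro:dominios}, verify commutativity, and then bootstrap via the extended Heinz inequality (Lemma~\ref{lem:extension heinz dominios}). Your write-up is in fact more explicit than the paper's in that you spell out the two-sided application of Lemma~\ref{lem:extension heinz dominios} with the scaled operators $M^{2}(B^*B)^{-1}$ and $m^{-2}L^{2(a+s)}$, a step the paper leaves implicit.
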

\begin{proof}
First note that from Lemma \ref{coro:dominios} \textit{i)} it
follows that $\mathcal D((B^*B)^{-1})=\X_{2(a+s)}=\mathcal
D(L^{2(a+s)})$. On the other hand since the operators $L^{-1}$ and
$T^*T$ commute, then $T^*T$ and $L^{-r}$ also commute for every
$r>0$ (see \cite{refb:Friedrichs-1973}, page 140). Then, the
operators $B^*B=L^{-s}T^*TL^{-s}$ and $L^{-2(a+s)}$ commute and
therefore their respective inverses also commute. From Lemma
\ref{coro:dominios} \textit{iii)} and Lemma \ref{lem:extension
heinz dominios} it then follows that
$$\mathcal D\left(\left((B^*B)^{-1}\right)^\nu\right)=\mathcal D\left(\left(L^{2(a+s)}\right)^\nu\right)
\hspace{0.3cm}\forall\,\, \nu\geq0,$$ that is
$$\mathcal R\left((B^*B)^\nu\right)=\mathcal \X_{2\nu(a+s)}.$$
\end{proof}
The following corollary shows the relation between the source
conditions of Theorems \ref{teo:ordenes de convergencia},
\ref{teo:ordenes de convergencia ext} and \ref{teo: T y L
conmutan}.
\begin{cor}\label{lem: T y L conmutan Xmu}
Let $\X$, $\Y$, $T$, $L$, $(\X_t)_{t\in\mathbb R}$, $s$, $a$ and
$B$ as in Theorem \ref{theorem:extension theorem B*B}. Then
\begin{enumerate}[i)]
\item $\X_u=\R\left((B^*B)^\frac{u}{2(a+s)}\right)\,\,\forall\,u\geq
0$.
\item $\R\left((L^{-2s}T^*T)^\frac{u}{2(a+s)}\right)\subset
\R\left((L^{-2s}T^*T_{|_{\X_s}})^\frac{u-s}{2(a+s)}\right)\,\,\forall\,u\in[s,a+2s]$.
\end{enumerate}
\end{cor}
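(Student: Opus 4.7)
My plan is that this corollary is essentially a bookkeeping consequence of Theorem \ref{theorem:extension theorem B*B} and Proposition \ref{lem:relacion conj fuentes}, both of which do the real work.

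For part \textit{i)}, I would simply reparametrize Theorem \ref{theorem:extension theorem B*B}. That theorem states $\R((B^*B)^{\nu/2})=\X_{\nu(a+s)}$ for every $\nu>0$. Given $u>0$, setting $\nu\doteq u/(a+s)$ yields $\R((B^*B)^{u/(2(a+s))})=\X_u$, which is exactly what part \textit{i)} asserts. The case $u=0$ is trivial since both sides reduce to $\X$.

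For part \textit{ii)}, the key observation is that the commutativity of $L^{-1}$ and $T^*T$ (which extends to commutativity of $L^{-s}$ and $T^*T$ by the same reference used in the proof of Theorem \ref{theorem:extension theorem B*B}) gives the identity
\begin{equation*}
B^*B \;=\; L^{-s}T^*TL^{-s} \;=\; L^{-2s}T^*T.
\end{equation*}
Therefore, by part \textit{i)} applied to the operator $L^{-2s}T^*T=B^*B$,
\begin{equation*}
\R\!\left((L^{-2s}T^*T)^{\frac{u}{2(a+s)}}\right)\;=\;\R\!\left((B^*B)^{\frac{u}{2(a+s)}}\right)\;=\;\X_u.
\end{equation*}
Now Proposition \ref{lem:relacion conj fuentes} says precisely that $\X_u\subset \R\bigl((L^{-2s}T^*T_{|_{\X_s}})^{(u-s)/(2(a+s))}\bigr)$ whenever $u\in[s,a+2s]$. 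Chaining these two inclusions yields the desired inclusion of \textit{ii)}.

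Since all the heavy lifting (the extension of the range identity beyond $\nu=1$ via commutativity, the extended Heinz inequality, and the Heinz-based inclusion in Proposition \ref{lem:relacion conj fuentes}) has already been carried out, there is no genuine obstacle here. The only subtlety worth flagging is ensuring that the identity $L^{-s}T^*TL^{-s}=L^{-2s}T^*T$ is justified on all of $\X$ (or in the appropriate spectral sense), which follows from the fact that if $L^{-1}$ commutes with $T^*T$ then so does $L^{-r}$ for every $r>0$, exactly as invoked at the end of the proof of Theorem \ref{theorem:extension theorem B*B}.
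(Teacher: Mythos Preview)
Your proposal is correct and follows essentially the same route as the paper: part \textit{i)} is obtained from Theorem \ref{theorem:extension theorem B*B} via the substitution $\nu=u/(a+s)$, and part \textit{ii)} uses the commutativity to identify $L^{-2s}T^*T$ with $B^*B$, then applies part \textit{i)} followed by Proposition \ref{lem:relacion conj fuentes}. Your extra remark handling $u=0$ and your note on why $L^{-s}$ commutes with $T^*T$ are fine clarifications that the paper leaves implicit.
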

\begin{proof}
Part \textit{i)} follows immediately from Theorem
\ref{theorem:extension theorem B*B} with $\nu\doteq\frac{u}{a+s}$.
To prove \textit{ii)} note that if $u\in[s,a+2s]$ then
\begin{align*}
\R\left((L^{-2s}T^*T)^\frac{u}{2(a+s)}\right)&= \R\left((B^*B)^\frac{u}{2(a+s)}\right) &
(\text{since } L^{-s} \text{ y }T^*T \text{ commute})\\
&=\X_u & (\text{by }\textit{i)})\\
&\subset
\R\left((L^{-2s}T^*T_{|_{\X_s}})^\frac{u-s}{2(a+s)}\right).&(\text{by
Proposition \ref{lem:relacion conj fuentes}})
\end{align*}
Hence
\begin{equation*}
\R\left((L^{-2s}T^*T)^\frac{u}{2(a+s)}\right)\subset
\R\left((L^{-2s}T^*T_{|_{\X_s}})^\frac{u-s}{2(a+s)}\right),
\end{equation*}
as we wanted to prove.
\end{proof}

\begin{rem}
Under the hypothesis that the operators $L^{-1}$ and $T^*T$
commute, Corollary \ref{lem: T y L conmutan Xmu} implies that for
$u\in[s, a+2s]$ the source condition $x^\dag
\in\R\left((B^*B)^\frac{u}{2(a+s)}\right) $ of Theorem \ref{teo: T
y L conmutan} is more restrictive than the source condition
$x^\dag \in\R\left((L^{-2s}T^*T_s)^\frac{u-s}{2(a+s)}\right) $ of
Theorem \ref{teo:ordenes de convergencia ext}. However, it is
important to point out here that Theorem \ref{teo: T y L conmutan}
is valid for a set of values of $u$ which is larger than the one
for which Theorem \ref{teo:ordenes de convergencia ext} is valid.
In particular, Theorem \ref{teo: T y L conmutan} is valid for
values of $u\in(2\mu_{\scriptscriptstyle
0}(a+s)-a,2\mu_{\scriptscriptstyle 0}(a+s)]$ (for which Theorem
\ref{teo:ordenes de convergencia ext} is not valid), thus allowing
us to obtain better orders of convergence.
\end{rem}
%


\section{Main Results}\label{sec:3}
\subsection{Multiple Hilbert scales}

In this section we will first introduce the concept of a multiple
(or vectorial) Hilbert scale. Then we shall define a
regularization method in these multiple Hilbert scales and prove
several convergence theorems, some of which generalize results
obtained in the previous section.

Let $T$ be a linear continuous operator between the Hilbert spaces
$\X$ and $\Y$. Consider also $N$ linear, densely defined,
unbounded, strictly positive, self-adjoint operators , with open
dense domains.
\begin{equation}\label{eq:op Li en cap4}
L_i: \mathcal D(L_i)\subset\X \longrightarrow \X, \hspace{0.2cm}
i=1,2,...,N.
\end{equation}
Thus, each $L_i$ is a closed operator on $\X$ satisfying:
$\D(L_i)=\D(L_i^*)$ is dense in $\X$,  $\langle L_ix,y\rangle =
\langle x, L_iy\rangle$ for every $x,y\in\D(L_i)$ and there exists
a positive constant $\gamma_i$ such that $\langle
L_ix,x\rangle\geq \gamma_i \norm{x}^2$ for every $x\in\D(L_i)$.

In what follows we shall obtain regularized solutions of the
ill-posed problem $Tx=y$, by means of the simultaneous use of the
$N$ Hilbert scales induced on $\X$ by the operators $L_i,$ $1\leq
i\leq N$. The motivation for this development comes mainly from
the idea of combining the advantages of the use of general
penalizing terms in Tikhonov-Phillips type methods (see
\cite{ref:Mazzieri-Spies-Temperini-2011}) with the adaptive
virtues that regularization in Hilbert scales possess in regard to
the order of convergence of the total error as a function of the
degree of regularity of the exact solution. In order to do that we
will previously need to extend some of the concepts and
definitions that were introduced in Section \ref{sec:1}.

For each index $i,$ $1\leq i\leq N,$ consider the set $\mathcal
M_i$ of all the elements $x\in\X$ for which all natural powers of
$L_i$ are defined, i.e. $\mathcal M_i\doteq \bigcap _{k=1} ^\infty
\D(L_i^k)$. As seen in Section \ref{sec:1}, $\mathcal M_i$ is
dense in $\X$, the powers $L_i^s$ are well defined on $\mathcal
M_i$ for every $s\in\mathbb R$ and $\mathcal M_i= \bigcap
_{s\in\mathbb R} \D(L_i^s)$.
Now, for each $i=1,2,\ldots,N$, and for each $s\in\mathbb R$ we
define the mapping  $\langle\cdot,\cdot\rangle_{L_i,s}:\mathcal
M_i\times \mathcal M_i\longrightarrow \mathbb{C}$ as $\langle x,
y\rangle_{L_i,s} \doteq \langle L_i^sx, L_i^sy\rangle$,
$x,y\in\mathcal M_i$. Again, as seen in Section \ref{sec:1},
$\langle\cdot,\cdot\rangle_{L_i,s}$ defines an inner product in
$\mathcal M_i$, which induces the norm $\norm x_{L_i,s} \doteq
\norm{L_i^sx}$, and $L_i$ induces on $\X$ a Hilbert scale, that we
shall denote with $(\X_{\s
t}^{\scriptscriptstyle{L_i}})_{t\in\mathbb R}$. Here $\X_{\s
t}^{\scriptscriptstyle{L_i}}$ is the completion of $\mathcal M_i$
in the $\norm\cdot_{L_i,t}$-norm.

Let us now consider the Hilbert space $\X^N$ consisting of $N$
copies of $\X$, i.e. $\ds \X^N\doteq\bigotimes_{i=1}^N\X$ with the
usual inner product in a product space. With the operators $L_{\s
1},L_{\s 2},\ldots,L_{\s N}$ given in (\ref{eq:op Li en cap4}) we
define the operator $\ds \vec L: \X^N\longrightarrow\X^N $ as
\begin{equation}\label{eq:L multiples}
\D(\vec L)\doteq\bigotimes_{i=1}^N\D(L_i),\quad \ds \vec L\doteq
\text{diag}(L_{\scriptscriptstyle 1}, L_{\scriptscriptstyle 2},
\ldots, L_{\scriptscriptstyle N}),
\end{equation}
so that for $\ds\vec x=\left( x_{\scriptscriptstyle
1},x_{\scriptscriptstyle 2}, \ldots, x_{\scriptscriptstyle N}
\right)^T \in \D(\vec L)$ one has
$\vec L\vec x\doteq \left( L_{\scriptscriptstyle
1}x_{\scriptscriptstyle 1}, L_{\scriptscriptstyle
2}x_{\scriptscriptstyle 2}, \ldots, L_{\scriptscriptstyle
N}x_{\scriptscriptstyle N} \right)^T.
$
Given the operator $\vec L$ defined as in (\ref{eq:L multiples})
and $\vec s\doteq (s_{\scriptscriptstyle 1},s_{\scriptscriptstyle
2},\ldots,s_{\scriptscriptstyle N})^T \in\mathbb R^{\s N}$, the
operator $\vec L^{\vec s}$ is defined as $\vec L^{\vec s}\doteq
\text{diag}(L_{\scriptscriptstyle 1}^{ s_1}x_{\scriptscriptstyle
1}, L_{\scriptscriptstyle 2}^{ s_2}x_{\scriptscriptstyle 2},
\ldots, L_{\scriptscriptstyle N}^{ s_N}x_{\scriptscriptstyle N})$,
i.e. $\D(\vec L^{\vec s})\doteq \bigotimes_{i=1}^N\D(L_i^{s_i})$
and for $\vec x=(x_{\s 1},x_{\s_2},\ldots,x_{\s N})^T\in\D(\vec
L^{\vec s})$
\begin{equation}\label{eq:potencia vectorial multiple}
\vec L^{\vec s}\vec x\doteq
\left( L_{\scriptscriptstyle 1}^{
s_1}x_{\scriptscriptstyle 1}, L_{\scriptscriptstyle 2}^{
s_2}x_{\scriptscriptstyle 2}, \ldots, L_{\scriptscriptstyle N}^{
s_N}x_{\scriptscriptstyle N} \right)^T.
\end{equation}
Now, for every $\vec s\doteq (s_{\s 1}, s_{\s 2},\ldots,s_{\s
N})^T\in\mathbb R^N$ and $x,y\in\widehat {\mathcal M}\doteq
\bigotimes_{i=1}^N\mathcal M_i$, we define $\ds \langle x,
y\rangle_{\s \vec L^{\vec s}} \doteq \sum_{i=1}^{N}\langle x_i,
y_i\rangle_{\s L_i^{s_i}}=\sum_{i=1}^N \langle L_i^{s_i}x_i,
L_i^{s_i}y_i\rangle$. It can be immediately seen that
$\langle\cdot,\cdot\rangle_{\s \vec L^{\vec s}}$ defines an inner
product in $\widehat {\mathcal M}$, which in turn induces the norm
$\|\vec x\|_{\s \vec L^{\vec s}}^2=\sum_{i=1}^N
\norm{L_i^{s_i}x_i}^2$.

\begin{defn}\label{def:escalas de Hilbert multiples}
(Multiple or vectorial Hilbert scale) Let $\vec L$ be as in
(\ref{eq:L multiples}), $\vec s\doteq (s_{\s 1}, s_{\s
2},\ldots,s_{\s N})^T\in\mathbb R^N$ and $\widehat {\mathcal M}=
\bigotimes_{i=1}^N\mathcal M_i$. The Hilbert space $\X_{\s\vec
s}^{\s\vec L}$ is defined as the completion of $\widehat {\mathcal
M}$ with respect to the norm $\norm{\cdot}_{\s \vec L^{\vec s}}$. The
family of spaces $(\X_{\s \vec s}^{\s\vec L})_{\s \vec s\in\mathbb
R^N}$ is called the vectorial Hilbert scale induced by $\vec L$
over $\X^N$. The operator $\vec L$ is called a ``generator" of the
Hilbert scale $(\X_{\s \vec s}^{\s\vec L})_{\s \vec s\in\mathbb
R^{N}}$.
\end{defn}

\begin{rem}
Since $\vec L$ is diagonal, it can be easily seen that $\Pi _j
\X_{\s\vec s}^{\s\vec L}=\X_{\s j}^{\s L_j}$ where $\Pi _j$ is the
$j$-th canonical projection of $\X^N$ onto $\X$. Moreover, for any
$t\in\mathbb R$ one has that $\X_{\s t}^{\s N}= \X_{\s
(t,t,\ldots,t)}^{\s\vec L}$, where $\X_{\s t}^{\s N}$ denotes the
usual (single) Hilbert scale induced by $\vec L$ on $\X^N$. Note
here that $\X_{\s t}^{\s N}$ is defined for any positive
self-adjoint operator $\vec L$ (not necessarily diagonal) on
$\X^N$. Also, although here we are only interested in the case in
which $\X_i=\X\,\,\forall i$, the definition of a multiple Hilbert
scale can be extended to the case of an arbitrary product Hilbert
space $\X=\bigotimes_{i=1}^N\X_i$ in an obvious way.
\end{rem}


\subsection{Regularization in multiple Hilbert scales with a single observation}
Next we proceed to define an operator $\vec B$ which will allow us
to define a regularization for $T^\dag$ in a multiple Hilbert
scale. Consider the Hilbert spaces $\ds
\X^N\doteq\bigotimes_{i=1}^N\X$ and $\ds
\Y^N\doteq\bigotimes_{i=1}^N\Y$, each one of them with the usual
inherited inner product. Given $\vec s \in\mathbb
R_{\scriptscriptstyle +}^N$ the operator $\ds \vec
B:\X^N\longrightarrow \Y^N$ is defined as $\vec B\doteq \vec T\vec
L^{-\vec s}$, where $\vec T:\X^N\to\Y^N$ is defined by  $\vec
T\doteq\text{diag}(T,T,\ldots,T)$. Thus, given $\ds \vec x\in
\X^N$
\begin{equation}\label{eq:B escalas multiples}
\vec B\vec x=\vec T\vec L^{-\vec s}\vec x= \left(T L_{\scriptscriptstyle 1}^{
-s_1}x_{\scriptscriptstyle 1}, T L_{\scriptscriptstyle 2}^{
-s_2}x_{\scriptscriptstyle 2}, \ldots,T L_{\scriptscriptstyle N}^{
-s_N}x_{\scriptscriptstyle N} \right)^T.
\end{equation}
From the properties of the operators $L_i$, it
follows immediately that the adjoint of $\vec B$ is given by $\vec B^*\doteq
\vec L^{-\vec s}\vec T^*$, where $\vec T^*\doteq \text{diag}(T^*,T^*,\ldots,T^*)$.
Thus, for every  $\ds\vec y\in \Y^N$ we have that
\begin{equation}\label{eq:B* multiples}
\vec B^*\vec y=\vec L^{-\vec s}\vec T^*\vec y= \left( L_{\scriptscriptstyle 1}^{
-s_1}T^*y_{\scriptscriptstyle 1}, L_{\scriptscriptstyle 2}^{
-s_2}T^*y_{\scriptscriptstyle 2}, \ldots,L_{\scriptscriptstyle N}^{
-s_N}T^*y_{\scriptscriptstyle N} \right)^T,
\end{equation}
and therefore for every $\vec x \in\X^N$ there holds
\begin{align}
\vec B^*\vec B\vec x&= \vec L^{-\vec s}\vec T^*\vec T \vec L^{-\vec s}\vec x\nonumber\\
&=\left(L_{\scriptscriptstyle 1}^{
-s_1}T^*T L_{\scriptscriptstyle 1}^{
-s_1}x_{\scriptscriptstyle 1}, L_{\scriptscriptstyle 2}^{
-s_2}T^*T L_{\scriptscriptstyle 2}^{
-s_2}x_{\scriptscriptstyle 2}, \ldots,L_{\scriptscriptstyle N}^{
-s_N}T^*T L_{\scriptscriptstyle N}^{
-s_N}x_{\scriptscriptstyle N} \right)^T\nonumber\\
&= \left(B_{\s 1}^*B_{\s 1}x_{\s 1},B_{\s 2}^*B_{\s 2}x_{\s 2},
\ldots,B_{\s N}^*B_{\s N}x_{\scriptscriptstyle N}\right)^T,\label{eq:B*B en multiples}
\end{align}
where $B_i\doteq TL_i^{-s_i}$ y $B_i^*=L_i^{-s_i}T^*$ is the
adjoint of the operator $B_i$ (compare with the definition of $B$
given in Theorem \ref{theorem:cota B*B}). Note that the operators
$\vec B^*\vec B$ and $B_i^*B_i$, for each index $i$, $1\leq i\leq
N$, are linear self-adjoint operators on the Hilbert spaces $\X^N$
and $\X$, respectively. As such, for each one of them there exists
a unique spectral family which allows them to be represented in
terms of the integral of the identity with respect to the
``operator valued measure" induced by that spectral family. We
shall denote with $\ds \{E^{\scriptscriptstyle \vec B^*\vec
B}_{\lambda}\}_{\lambda \in \mathbb{R}} $ and
$\{E^{\scriptscriptstyle B_i^*B_i}_{\lambda}\}_{\lambda \in
\mathbb{R}}$ the spectral families of the operators $\vec B^*\vec
B$ y $B_i^*B_i$, respectively (note that these families are
partitions of the identity on the spaces $\X^N$ and $\X$
respectively).

Let $g:\mathbb{R}\to\mathbb{R}$ be a piecewise continuous function
and consider the operators $g(\vec B^*\vec B):\X^N\longrightarrow\X^N$ and
$g(B^*_iB_i):\X\longrightarrow\X$, $1\leq i\leq N$. From
(\ref{eq:B*B en multiples}) it can be easily proved that
\begin{equation}\label{eq:rela galpha multiples}
\left(g(\vec B^*\vec B)\vec x\right)_i= g(B^*_iB_i)x_i,
\end{equation}
where $\vec x=\left(x_{\s 1},x_{\s 2},\ldots,x_{\s N}\right)^T$.

The next theorem states a convergence result which generalizes
Theorem \ref{coro:coro de ordenes de convergencia} to the case of
multiple Hilbert scales.
\begin{thm}\label{theorem:ordenes de convergencia multiples}
Let $T\in\mathcal L(\X,\Y)$ with $\X$ and $\Y$ Hilbert spaces,
$L_i: \mathcal D(L_i)\subset\X \longrightarrow \X$, $1\leq i \leq
N$, linear densely defined, self-adjoint and strictly positive
operators on $\X$, each one of them with open domain,
$L_i\geq\gamma_i$ for a constant $\gamma_i>0$,
and let $\vec L:\X^N\longrightarrow\X^N$
be as in (\ref{eq:L multiples}). Suppose also that for each index
$i$, $1\leq i\leq N$, there exist constants $m_i,\; M_i$, with
$0<m_i\leq M_i<\infty$, and $a_i>0$, such that for every $x\in\X$
the following condition holds:
\begin{equation}\label{eq:LiT suavidad}
m_i\norm{x}_{\s L_i,-a_i}\leq\norm{Tx}\leq M_i \norm{x}_{\s L_i,-a_i}.
\end{equation}
Let $\vec s=(s_{\scriptscriptstyle 1},s_{\scriptscriptstyle
2},\ldots,s_{\scriptscriptstyle N})^T \in\mathbb
R_{\scriptscriptstyle +}^N,\,\vec T=\text{diag}(T,T,\ldots,
T),\,\vec B\doteq \vec T \vec L^{-\vec s},\,\vec
\eta=(\eta_{\scriptscriptstyle 1},\eta_{\scriptscriptstyle
2},\ldots,\eta_{\scriptscriptstyle N})^T \in\mathbb
R_{\scriptscriptstyle +}^N$ such that $\ds\sum_{i=1}^N \eta_i=1$.
Also let $g_\alpha:[0,\|\vec B\|^2]\rightarrow \mathbb R,\,
\alpha>0$, be a family piecewise continuous real-valued functions
verifying the following conditions:
\begin{itemize}
\item[C1:]For every $\lambda \in(0,\|\vec B\|^2]$ there
holds $\lim_{\alpha\rightarrow 0^{\scriptscriptstyle
+}}g_\alpha(\lambda)=\frac{1}{\lambda}$. 
\item[C2:] There exists a constant $\hat c>0$ such that $\forall \,\, \lambda \in(0,\norm{\vec B}^2]$ and
$\forall\,\,\alpha>0$ there holds
$\abs{g_\alpha(\lambda)}\leq \hat c\alpha^{-1}$. 
\item[C3:] There exists $\mu_{\scriptscriptstyle 0}\geq 1$ such
that if $\mu\in[0,\mu_{\scriptscriptstyle 0}]$ then
$\lambda^\mu\abs{r_\alpha(\lambda)} \leq c_\mu\alpha^\mu\;\;
\forall \,\,\lambda \in(0,\|\vec B\|^2]$, where $c_\mu$ is a
positive constant and $r_\alpha(\lambda)\doteq 1-\lambda
g_\alpha(\lambda)$.
\end{itemize}
For $y\in\D(T^\dagger)$ and $y^\delta\in\Y$ with
$\norm{y-y^\delta}\leq \delta$, we define the regularized solution
of the problem $Tx=y$ with data $y^\delta$, as
\begin{equation}\label{eq:reg esc Hilbert multiples}
x_{\alpha} ^\delta\doteq \vec \eta{\,\s\bullet}\left(\vec L^{-\vec s} g_{\alpha}(\vec B^*\vec B)\vec B^*
\underline{\vec y}\,^\delta\right),
\end{equation}
$\ds\underline{\vec y}\,^\delta\doteq
(y^\delta,y^\delta,\ldots,y^\delta)^T\in\Y^N$. Suppose that for
each index $i$, $1\leq i\leq N$ there exists $u_i\in [0,a_i+2s_i]$
such that $x^\dag =T^\dag y\in \X_{\s u_i}^{\s L_i}$, i.e. $\vec
x^\dag\in\X_{\s \vec u}^{\s \vec L}$, where $\vec x^\dag
\doteq(x^\dag,x^\dag,\ldots,x^\dag)^T$, $\vec u\doteq
(u_{\scriptscriptstyle 1},u_{\scriptscriptstyle
2},\ldots,u_{\scriptscriptstyle N})^T$ and $(\X_{\s t}^{\s
L_i})_{\s t\in\mathbb R}$, $(\X_{\s \vec u}^{\s \vec L})_{\s \vec
u\in\mathbb R^N}$ are the Hilbert scale induced by $L_i$ over $\X$
and the multiple Hilbert scale induced by $\vec L$ over
$\X^N=\bigotimes_{i=1}^N\X$, respectively. Suppose that the
regularization parameter $\alpha$ is chosen as
\begin{equation}\label{eq:elecalphamultiples}
\alpha=\alpha(\delta)\doteq c\,\delta^\varepsilon
\hspace{0.5cm}\text{with }\varepsilon\in\left(0,\min_{\s 1\leq
i\leq N}\left\{\frac{2(a_i+s_i)}{a_i}\right\}\right),
\end{equation}
where $c>0$ and, for each index $i$, with $1\leq i\leq N$, $a_i$
is the constant in (\ref{eq:LiT suavidad}). Then:
\begin{enumerate}
\item[i)] $\displaystyle\lim_{\delta\to 0}
x_{\alpha(\delta)}^\delta\,=\, x^\dag$ and, moreover,
\item[ii)] the total error satisfies the following order of convergence: $\norm{x_\alpha^\delta -x^\dag}=\mathcal
O(\delta^\sigma)$ where $\sigma\doteq\ds\min_{\s i\leq i\leq
N}\min\left\{
1-\frac{a_i\varepsilon}{2(a_i+s_i)},\frac{u_i\varepsilon}{2(a_i+s_i)}\right\}$.
\item[iii)] The order of convergence of the total error in \textit{ii)} is optimal when in
(\ref{eq:elecalphamultiples}) the value of $\varepsilon$ is chosen
as
$$\ds\varepsilon=\left(\max_{\s 1\leq i\leq
N}\frac{a_i}{2(a_i+s_i)}+\min_{\s 1\leq i\leq
N}\frac{u_i}{2(a_i+s_i)}\right)^{-1},$$ in which case
$\ds\norm{x_\alpha^\delta -x^\dag}=\mathcal
O(\delta^{\sigma_\ce}),$ where
$\ds\sigma_\ce\doteq\s\frac{\ds\min_{\s 1\leq i \leq
N}\frac{u_i}{2(a_i+s_i)}}{\ds\min_{\s 1\leq i\leq
N}\frac{u_i}{2(a_i+s_i)}+ \ds\max_{\s 1\leq i\leq
N}\frac{a_i}{2(a_i+s_i)}}$.
\end{enumerate}
\end{thm}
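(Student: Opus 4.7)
The plan is to exploit the block-diagonal structure of $\vec{B}^*\vec{B}$ to reduce the multi-scale problem to $N$ parallel applications of Theorem \ref{coro:coro de ordenes de convergencia}. First I would note that since $\vec{B}=\vec{T}\vec{L}^{-\vec{s}}$ is block-diagonal with blocks $B_i\doteq TL_i^{-s_i}$, by (\ref{eq:B*B en multiples}) so is $\vec{B}^*\vec{B}$, and by (\ref{eq:rela galpha multiples}) the operator $g_\alpha(\vec{B}^*\vec{B})$ is block-diagonal with blocks $g_\alpha(B_i^*B_i)$. Consequently, since $\underline{\vec{y}}^\delta=(y^\delta,\ldots,y^\delta)^T$, the definition (\ref{eq:reg esc Hilbert multiples}) unfolds to
$$x_\alpha^\delta=\sum_{i=1}^{N}\eta_i\,R_\alpha^{(i)}y^\delta,\qquad R_\alpha^{(i)}\doteq L_i^{-s_i}g_\alpha(B_i^*B_i)B_i^*,$$
so each $R_\alpha^{(i)}$ is the Natterer-type regularization operator on the single Hilbert scale $(\X_t^{L_i})_{t\in\mathbb{R}}$.

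Using $\sum_{i=1}^N\eta_i=1$ I would then write $x^\dag=\sum_i\eta_i x^\dag$ and apply the triangle inequality:
$$\|x_\alpha^\delta-x^\dag\|\leq\sum_{i=1}^{N}\eta_i\,\|R_\alpha^{(i)}y^\delta-x^\dag\|.$$
For each index $i$, hypothesis (\ref{eq:LiT suavidad}) is precisely (\ref{eq:LT suavidad}) for the triple $(L_i,T,a_i)$, the source condition $x^\dag\in\X_{u_i}^{L_i}$ is in force with $u_i\in[0,a_i+2s_i]$, and since $\|B_i\|\leq\|\vec{B}\|$ the spectral conditions C1--C3 restrict to $(0,\|B_i\|^2]$. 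The hypothesis $\varepsilon<2(a_i+s_i)/a_i$ for every $i$ then places us within the range of Theorem \ref{coro:coro de ordenes de convergencia}, which yields
$$\|R_\alpha^{(i)}y^\delta-x^\dag\|=\mathcal{O}\bigl(\delta^{\sigma_i}\bigr),\qquad\sigma_i\doteq\min\!\left\{1-\tfrac{a_i\varepsilon}{2(a_i+s_i)},\;\tfrac{u_i\varepsilon}{2(a_i+s_i)}\right\}.$$
Summing with weights $\eta_i$ gives $\|x_\alpha^\delta-x^\dag\|=\mathcal{O}(\delta^\sigma)$ with $\sigma=\min_i\sigma_i>0$, which proves (i) and (ii).

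For (iii), set $p_i\doteq a_i/[2(a_i+s_i)]$ and $q_i\doteq u_i/[2(a_i+s_i)]$, so that
$$\sigma(\varepsilon)=\min\!\Bigl\{1-\varepsilon\max_i p_i,\;\varepsilon\min_i q_i\Bigr\}.$$
As a function of $\varepsilon$ the first argument of the min is strictly decreasing and the second strictly increasing; thus $\sigma$ is maximized at the unique $\varepsilon$ where they agree, namely $\varepsilon=(\max_ip_i+\min_iq_i)^{-1}$, giving the asserted $\sigma_\ce$. I would also verify that this optimal $\varepsilon$ lies inside the admissible interval in (\ref{eq:elecalphamultiples}), which reduces to $(\max_ip_i+\min_iq_i)^{-1}<(\max_ip_i)^{-1}$, true since $\min_iq_i>0$ whenever some $u_i>0$ (the trivial case $\vec{u}=0$ being uninteresting).

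The main potential obstacle is the justification that the functional calculus for $\vec{B}^*\vec{B}$ really splits across blocks so that (\ref{eq:rela galpha multiples}) is valid not only for scalar polynomials but for the piecewise continuous $g_\alpha$ used here. This is however immediate from the spectral theorem: the spectral family of a block-diagonal self-adjoint operator is the direct sum of the spectral families of its blocks, so $g_\alpha(\vec{B}^*\vec{B})=\mathrm{diag}(g_\alpha(B_i^*B_i))$ automatically, and the remaining bookkeeping is routine.
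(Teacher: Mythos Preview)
Your proposal is correct and follows essentially the same approach as the paper: decompose the block-diagonal regularization into $N$ parallel single-scale regularizations via (\ref{eq:rela galpha multiples}), apply Theorem~\ref{coro:coro de ordenes de convergencia} componentwise, and combine using the convex weights $\eta_i$. The only minor difference is in part~(iii): the paper returns to the two-term additive estimate (\ref{eq:1 coro ordenes}) and balances $\hat C_1\delta^{1-\varepsilon\max_i p_i}+\hat C_2\delta^{\varepsilon\min_i q_i}$, whereas you optimize $\sigma(\varepsilon)=\min\{1-\varepsilon\max_i p_i,\,\varepsilon\min_i q_i\}$ directly; both routes equalize the two exponents and yield the same optimal $\varepsilon$ and $\sigma_\ce$.
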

\begin{proof}
Applying Theorem \ref{coro:coro de ordenes de convergencia} to
each operator $L_i$, $1\leq i\leq N$, since $\varepsilon\leq
\frac{2(a_i+s_i)}{a_i}$, with the choice of $\alpha$ as in
(\ref{eq:elecalphamultiples}) it follows that
\begin{equation}\label{eq:orden i de conv multiples}
\norm{x_{i,\alpha}^\delta-x^\dag}=\mathcal
O\left(\delta^{\sigma_i}\right),
\end{equation}
where
\begin{equation}\label{eq:reg i esc Hilbert}
x_{i,\alpha} ^\delta\doteq
L_i^{-s_i}g_\alpha(B_i^*B_i)B_i^*y^\delta \hspace{0.5cm} \text{and
}\hspace{0.5cm} \sigma_i=
\min\left\{1-\frac{a_i\varepsilon}{2(a_i+s_i)},\frac{u_i\varepsilon}{2(a_i+s_i)}\right\}.
\end{equation}
Then,
\begin{align*}
\norm{x_{\alpha}^\delta -x^\dag}&=\norm{\vec
\eta{\,\s\bullet}\left(\vec L^{-\vec s} g_{\alpha}(\vec B^*\vec B)\vec B^* \underline{\vec y}\,^\delta\right) -x^\dag}& \\
&=\norm{\sum_{i=1}^N \eta_i L_i^{-s_i}g_{\alpha}(B_i^*B_i)B_i^* y\,^\delta -x^\dag}& (\text{by (\ref{eq:rela galpha multiples})})\\
&=\norm{\sum_{i=1}^N \eta_i x_{i,\alpha} ^\delta-x^\dag}&(\text{by (\ref{eq:reg i esc Hilbert})})\\
&=\norm{\sum_{i=1}^N \eta_i \left(x_{i,\alpha}^\delta-x^\dag\right)}&\left(\text{since }\sum_{i=1}^N\eta_i=1\right)&\\
&\leq \sum_{i=1}^N \eta_i \norm{x_{i,\alpha}^\delta-x^\dag}&\\
&\leq \sum_{i=1}^N \eta_i \,c_i\,\delta^{\sigma_i} & (\text{by (\ref{eq:orden i de conv multiples})})\\
&\leq C\,\delta^{\scriptscriptstyle{\s\underset{1\leq i\leq N}{\min}\sigma_i}}&\\
&= C\, \delta^\sigma,&
\end{align*}
where $C$ is a positive constant (for instance for $\delta \in
[0,1]$, $C$ can be taken as $C=\underset{1\leq i\leq
N}{\max}c_i$). That proves \textit{i)} and \textit{ii)}. To prove
\textit{iii)} note that from Theorem \ref{coro:coro de ordenes de
convergencia}, more precisely from (\ref{eq:1 coro ordenes}),
there exist positive constants $c_i,\, d_i$, $1\leq i\leq N$, such
that
\begin{align}
\norm{x_{\alpha}^\delta -x^\dag}&\leq \sum_{i=1}^N \eta_i \norm{x_{i,\alpha} ^\delta-x^\dag}&\nonumber\\
&\leq \sum_{i=1}^N \eta_i \left(c_i\,\delta\alpha^{\frac{-a_i}{2(a_i+s_i)}}+d_i\alpha^{\frac{u_i}{2(a_i+s_i)}}\right)& (\text{by (\ref{eq:1 coro ordenes})})\nonumber\\
&\leq C_1\,\delta\alpha^{\s-\underset{1\leq i\leq N}{\max}\frac{a_i}{2(a_i+s_i)}}+ C_2\,
\alpha^{\s\underset{1\leq i\leq N}{\min}\frac{u_i}{2(a_i+s_i)}}&\nonumber\\
&= \hat{C_1}\, \delta^{\s 1-\underset{1\leq i\leq
N}{\max}\frac{\varepsilon a_i}{2(a_i+s_i)}}+\hat{C_2}\,
\delta^{\s\underset{1\leq i\leq N}{\min}\frac{\varepsilon
u_i}{2(a_i+s_i)}}, & (\text{by (\ref{eq:elecalphamultiples})})\label{eq:cota error total cuenta
mult}
\end{align}
where $C_i$ and $\hat{C_i}$ are generic positive constants.

Finally, from (\ref{eq:cota error total cuenta mult}) it follows
that the order of convergence of the total error is optimal when
$\varepsilon$ satisfies $1-\underset{\s 1\leq i\leq
N}{\max}\frac{\varepsilon a_i}{2(a_i+s_i)}= \underset{\s 1\leq
i\leq N}{\min}\frac{\varepsilon u_i}{2(a_i+s_i)}$, that is when
$\varepsilon$ is chosen as
$
\ds\varepsilon=\left(\max_{\s 1\leq i\leq
N}\frac{a_i}{2(a_i+s_i)}+\min_{\s 1\leq i\leq
N}\frac{u_i}{2(a_i+s_i)}\right)^{-1}
$
in which case, also from (\ref{eq:cota error total cuenta mult}),
it follows that $\ds\norm{x_\alpha^\delta -x^\dag}=\mathcal
O(\delta^{\sigma_\ce})$, where $\ds\sigma_\ce$ is given by
$\ds\sigma_\ce=\frac{\ds\min_{\s 1\leq i \leq
N}\frac{u_i}{2(a_i+s_i)}}{\ds\min_{\s 1\leq i\leq
N}\frac{u_i}{2(a_i+s_i)}+ \ds\max_{\s 1\leq i\leq
N}\frac{a_i}{2(a_i+s_i)}}$.
\end{proof}
\begin{rem}\label{obs:dato vectorial}
From (\ref{eq:rela galpha multiples}) it follows that the
regularized solution $\ds x_{\alpha} ^\delta= \vec
\eta{\,\s\bullet}\left(\vec L^{-\vec s} g_{\alpha}(\vec B^*\vec
B)\vec B^*\underline{\vec y}\,^\delta\right)$ defined in
(\ref{eq:reg esc Hilbert multiples}) can also be written in the
form $\ds x_{\alpha} ^\delta=\sum_{i=1}^N \eta_i
x_{i,\alpha}^\delta$ where
$x_{i,\alpha}^\delta=L_i^{-s_i}g_\alpha(B_i^*B_i)B_i^*y^\delta$ is
a single regularized solution of the problem $Tx=y$ in the Hilbert
scale of order $s_i$ induced by the operator $L_i$ on $\X$, so
that $x_{i,\alpha}^\delta\in \X^{L_i}_{s_i}$. Therefore
$x_{\alpha}^\delta$ is a convex combination of such solutions. In
contrast with what happens in the case $N=1$, where it is known
that the regularized solution is in $\D(L^s)$, here, the degree of
regularity of $\ds x_{\alpha} ^\delta$ is not explicitly known
since the Hilbert scales $\X^{L_i}_{s_i}$ are not necessarily
related.
\end{rem}

%
%

\subsection{Regularization in multiple Hilbert scales with multiple observations}
In Theorem \ref{theorem:ordenes de convergencia multiples} we
noted that, given a single noisy observation $y^\delta$, we
generated the ``observation vector'' $\underline{\vec
y}\,^\delta\in\Y^N$ by using $N$ copies of $y^\delta$. In practice
it may happen that $N$ different observations of $y$, say $y_{\s
1}^\delta,y_{\s 2}^\delta,\ldots, y_{\s N}^\delta$, such that
$\norm{y_{\s i}^\delta -y}\leq \delta\,\,\forall\,\,i=1,2,...,N$,
be available. In such a case we can use them to construct the
observation vector in the form $\vec y\,^\delta\doteq(y_{\s
1}^\delta,y_{\s 2}^\delta,\ldots, y_{\s N}^\delta)^T\in\Y^N$.
Defining now
\begin{equation*}
x_{\alpha} ^\delta\doteq \vec \eta{\,\s\bullet}\left(\vec L^{-\vec s}
g_{\alpha}(\vec B^*\vec B)\vec B^*\vec y\,^\delta\right),
\end{equation*}
(with $\vec \eta,\,\vec s,\,g_\alpha,\,\vec B,\,
\alpha=c\,\delta^\varepsilon$ as in Theorem \ref{theorem:ordenes
de convergencia multiples}) it can be easily seen that the same
results of Theorem \ref{theorem:ordenes de convergencia multiples}
remain true. In particular, we have that $\ds
\lim_{\delta\rightarrow 0^{\s +}}\norm{x_\alpha^\delta-x^\dag}=0$
and $\norm{x_\alpha^\delta-x^\dag}=\mathcal O(\delta^\sigma)$ with
$\ds \sigma= \min_{\s 1\leq i\leq N}\min\left\{
1-\frac{a_i\varepsilon}{2(a_i+s_i)},\frac{u_i\varepsilon}{2(a_i+s_i)}\right\}$.
However, in this case of regularization in multiple Hilbert scales
with multiple observations, it is also possible to utilize
different types of regularization methods (i.e. different
$g_\alpha$'s) for each one of the observations $y_i^\delta,\,
1\leq i\leq N$, in each one of the $N$ Hilbert scales, maintaining
the convergence to the exact solution and even improving the order
of convergence. This may be of particular interest when certain
``\textit{a-priori}'' knowledge about the $i^\text{th}$
observation suggests the use of certain type of regularization
method. In order to proceed with the formalization and
presentation of this result, we will previously need to extend the
definition of a ``function of a self-adjoint operator'' $f(A)$, to
the case in which $\vec f:\mathbb{R}\to\mathbb{R}^N$ is a
vector-valued function and $A$ is a self-adjoint operator in a
product space $\ds\X=\bigotimes_{i=1}^N \X_i$, where $\X_i$ is a
Hilbert space for every $i=1,2,\ldots,N$. Let $\vec f:\mathbb
R\longrightarrow \RE^N,\, \vec f=(f_{\s 1},f_{\s 2},\ldots,f_{\s
N})^T,$ $\vec f$ be piecewise continuous,
$\{E_\lambda^A\}_{\lambda\in\RE}$ the spectral family of $A$,
$E_\lambda^A:\X\longrightarrow\X$, $E_\lambda^A=\left(E_{\s
\lambda, 1}^A,E_{\s \lambda, 2}^A,\ldots,E_{\s \lambda,
N}^A\right)^T$, $E_{\s \lambda, i}^A:\X\longrightarrow\X_i$ (note
that $E_{\s \lambda, i}^A$ is the $i^\text{th}$ component of the
projection operator $E_{\lambda}^A$ on $\X$). We define the
operator $\vec f(A)$ as the spectral vector-valued integral
\begin{align}\label{eq:int-espec-vect}
\vec f(A)\,\vec x&=\int_{-\infty}^\infty
\vec f(\lambda)\odot dE_\lambda^A\vec x=\left(
\begin{array}{c}
\vdots \\
\ds\int_{-\infty}^\infty f_i(\lambda)\, dE_{\s\lambda,i}^A\,\vec x \\
\vdots \\
\end{array}
\right),
\end{align}
where ``$\odot$" denotes the Hadamard product, with domain given
by
$$\ds \D(\vec f(A))=\left\{\vec
x\in\X:\sum_{i=1}^N\int_{-\infty}^{\infty}f_i^2(\lambda)\,d\norm{E_{\s\lambda,i}^A\vec
x}^2<\infty \right\}.$$
It is important to note in (\ref{eq:int-espec-vect}) that in the
integral $\ds\int_{-\infty}^\infty f_i(\lambda)\,
d\,E_{\s\lambda,i}^A\,\vec x$, the family
$\{E_{\s\lambda,i}^A\}_{\lambda \in\RE}$ is not a spectral family
(in fact it is not a partition of unity but rather a parametric
family of canonical projections of a spectral family on the
product space $\X=\bigotimes_{i=1}^N\X_i$). However, under the
hypothesis of piecewise continuity of $\vec f$, it can be easily
seen that its existence is guaranteed by the classical theory
functional calculus. In fact, given any $i$, $1\leq i\leq N$, by
defining $\vec g:\RE\longrightarrow \RE^N$ as $\vec
g(\lambda)=(f_{\s i}(\lambda),f_{\s i}(\lambda),\ldots,f_{\s
i}(\lambda))^T$, since $\vec f$ is piecewise continuous,  so is
$\vec g$ and therefore the operator $\vec g(A)$ is well defined
and it is clear that for every $\vec x\in\D(\vec g(A))$ one has
that $\ds[\vec g(A)\vec x]_i=\int_{-\infty}^\infty f_i(\lambda)\,
dE_{\s\lambda,i}^A\,\vec x$.

With this extension of the concept of a function of an operator to
the case of vector-valued functions of self-adjoint operators on
product spaces, we are now ready to present the following theorem
which extends the result of Theorem \ref{theorem:ordenes de
convergencia multiples} to the case of multiple observations with
vector-valued regularization functions in multiple Hilbert scales.

\begin{thm}\label{theorem:ordenes de convergencia multiples vect}
Let $\X,\,\Y,\,\X^N,\,\Y^N,\,T,\,\vec T,\, L_i,\, (\X_{\s t}^{\s
L_i})_{\s t\in\RE},\,\vec s, \, B_i=TL_{\s i}^{\s -s_i},\, 1\leq
i\leq N$, $\vec L$,  $(\X_{\s \vec u}^{\s \vec L})_{\s\vec
u\in\RE^N},\,\vec B=\vec T\vec L^{-\vec s}$ and $\vec\eta$, all as
in Theorem \ref{theorem:ordenes de convergencia multiples}. For
each index $i$,  $1\leq i\leq N$, let
$g_{\alpha_i}^i:[0,\norm{B_i}^2]\rightarrow \mathbb R,\,
\alpha_i>0$ be a family of piecewise continuous functions and
$r_{\alpha_i}^i(\lambda)\doteq 1-\lambda g_{\alpha_i}^i(\lambda)$.
Suppose also that each one of the families $\{g_{\alpha_i}^i\}$
verifies the conditions \textit{C1}, \textit{C2} y \textit{C3} of
Theorem \ref{theorem:ordenes de convergencia multiples}, that is:
\begin{eqnarray*}
\text{C1}&:&\forall \,\,\lambda \in(0,\norm{B_i}^2] \text{ there holds }\lim_{\alpha_i\rightarrow 0^{\scriptscriptstyle +}}g_{\alpha_i}^i(\lambda)=\frac{1}{\lambda};\label{eq:limgalphai}\\
\text{C2}&: & \exists \,\,\hat c_i>0 \text{ such that }\forall
\,\, \lambda \in(0,\norm{B_i}^2] \text{ and } \forall\,\,\alpha_i>0 \text{ there holds } \abs{g_{\alpha_i}^i(\lambda)}\leq \hat c_i\alpha_i^{-1};\label{eq:cotagalphai}\\
\text{C3}&: & \exists \,\,\mu_{\scriptscriptstyle 0}^i\geq 1
\text{ such that if }\mu\in[0,\mu_{\scriptscriptstyle 0}^i]\text{
then }\lambda^\mu\abs{r_{\alpha_i}^i(\lambda)} \leq
c^i_\mu\alpha_i^\mu\hspace{0.2cm} \forall \,\,\lambda
\in(0,\norm{B_i}^2],\label{eq:icalif}
\end{eqnarray*}
where the $c^i_\mu$'s are positive constants. Let us denote now
with $\vec\alpha=(\alpha_{\s 1},\alpha_{\s 2},\ldots,\alpha_{\s
N})^T$ the ``vector-valued regularization parameter'' and with
$\vec g_{\vec\alpha}:\RE\longrightarrow \RE^N$ the function given
by $\vec g_{\vec\alpha}(\lambda)=\left(g_{ \alpha_1}^{\s
1}(\lambda),g_{ \alpha_2}^{\s 2}(\lambda),\ldots,g_{ \alpha_N}^{\s
N}(\lambda)\right)^T$ and let $\vec g_{\vec\alpha}(\vec B^*\vec
B)$ be the linear continuous self-adjoint operator on $\X^N$
defined via (\ref{eq:int-espec-vect}). Let $y\in\D(T^\dagger)$,
$y_{\s 1}^\delta,y_{\s 2}^\delta,\ldots, y_{\s N}^\delta\in\Y$ be
such that $\norm{y_{\s i}^\delta -y}\leq
\delta\,\,\forall\,\,i=1,2,...,N$ and $\vec y\,^\delta\doteq(y_{\s
1}^\delta,y_{\s 2}^\delta,\ldots, y_{\s N}^\delta)^T\in\Y^N$. We
define the regularized solution $x_{\vec\alpha}^\delta$ of problem
(\ref{eq:prob-inv}) given the observations\, $y_{\s
1}^\delta,y_{\s 2}^\delta,,\ldots, y_{\s N}^\delta$, with
regularization methods $g_{\alpha_1}^{\s 1}(\cdot),g_{
\alpha_2}^{\s 2}(\cdot),$ $\dots,g_{ \alpha_N}^{\s N}(\cdot)$, in
the Hilbert scales $\X_{\s s_1}^{\s L_1},\, \X_{\s s_2}^{\s
L_2},\ldots,\X_{\s s_N}^{\s L_{\s N}}$ induced by the operators
$L_{\s 1}, L_{\s2}, \ldots, L_{\s N}$ over $\X$, with the weights
$\eta_{\s 1},\eta_{\s 2},\ldots,\eta_{\s N}$, as
\begin{equation}\label{eq:reg esc Hilbert multiples vec}
x_{\vec\alpha} ^\delta= x_{\vec\alpha} ^\delta\left(\vec g_{\vec
\alpha},\,\vec\eta,\,\vec L,\,\vec y\,^\delta,\vec s\right)\doteq
\vec \eta{\,\s\bullet}\left(\vec L^{-\vec s} \vec g_{\vec
\alpha}(\vec B^*\vec B)\vec B^* \vec y\,^\delta\right).
\end{equation}
Suppose also that $\forall \,\,i,\, 1\leq i\leq N$, there exists
$u_i\in [0,a_i+2s_i]$ such that $x^\dag=T^\dag y \in \X_{ \s
u_i}^{\s L_i}$, i.e. $\vec x^\dag\in \X_{\s\vec u}^{\s\vec L}$,
where $\vec x^\dag\doteq(x^\dag,x^\dag,\ldots,x^\dag)^T$ and $\vec
u\doteq (u_{\s 1}, u_{\s 2},\ldots,u_{\s N})^T$. If the
vector-valued regularization parameter $\vec\alpha$ is chosen in
the form
\begin{equation}\label{eq:elecalphamultiples vect}
\vec\alpha(\delta)=\left(c_{\s 1}\,\delta^{\varepsilon_{\s
1}},c_{\s 2}\, \delta^{\varepsilon_{\s 2}},\ldots, c_{\s
N}\,\delta^{\varepsilon_{\s N}}\right)^T
\end{equation}
where $c_i>0$ and $0<\varepsilon _i<\frac{2(a_i+s_i)}{a_i},\,
1\leq i\leq N$, then:
\begin{enumerate}
\item[i)] $\norm{x_{\vec\alpha}^\delta-x^\dag}\rightarrow 0$
for $\delta\rightarrow 0^{\s +}$.
\item[ii)]  $\norm{x_{\vec\alpha}^\delta-x^\dag}= \mathcal
O(\delta^\sigma)$, where $\ds\sigma\doteq\min_{\s 1\leq i\leq
N}\sigma_i$,
$\ds\sigma_i=\min\left\{1-\frac{a_i\varepsilon_i}{2(a_i+s_i)},\frac{u_i\varepsilon_i}{2(a_i+s_i)}\right\}$.
\item[iii)] The order of convergence of the total error is optimal
when the vector regularization parameter in
(\ref{eq:elecalphamultiples vect}) is chosen such that
$\varepsilon_i=\frac{2(a_i+s_i)}{a_i+u_i}$, in which case one
obtains $\norm{x_{\vec\alpha}^\delta-x^\dag}= \mathcal
O(\delta^{\sigma^{\s *}})$, where $\ds\sigma^*=\min_{\s 1\leq
i\leq N}\frac{u_i}{a_i+u_i}$.
\item[iv)] The optimal order $\mathcal O(\delta^{\sigma^{\s *}})$
in iii) which is obtained with this vector-valued (regularization
method) $\vec g_{\vec\alpha}$, is at least as good as the optimal
order $\mathcal O(\delta ^{\sigma_\ce})$\; which is obtained with
a single observation and
 a scalar $g_\alpha(\lambda)$ (see Theorem \ref{theorem:ordenes de
convergencia multiples} \textit{iii)}).
\end{enumerate}
\end{thm}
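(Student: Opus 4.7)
The plan is to reduce everything to the scalar single-observation case (Theorem~\ref{coro:coro de ordenes de convergencia}) by exploiting the diagonal structure of $\vec L$, $\vec T$ and hence $\vec B^*\vec B$. First I would unpack the vector-valued spectral integral (\ref{eq:int-espec-vect}) for $\vec g_{\vec\alpha}(\vec B^*\vec B)$ applied to $\vec B^*\vec y^{\,\delta}$. Because $\vec B^*\vec B=\text{diag}(B_1^*B_1,\dots,B_N^*B_N)$, its spectral family has the block-diagonal form $E_\lambda^{\vec B^*\vec B}=\text{diag}(E_\lambda^{B_1^*B_1},\dots,E_\lambda^{B_N^*B_N})$, so the $i$-th component of $\vec g_{\vec\alpha}(\vec B^*\vec B)\vec B^*\vec y^{\,\delta}$ equals $g_{\alpha_i}^i(B_i^*B_i)B_i^*y_i^\delta$. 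Applying $\vec L^{-\vec s}$ and dotting with $\vec\eta$ then yields
\begin{equation*}
x_{\vec\alpha}^\delta \,=\, \sum_{i=1}^N \eta_i\, x_{i,\alpha_i}^\delta,\qquad x_{i,\alpha_i}^\delta\doteq L_i^{-s_i}g_{\alpha_i}^i(B_i^*B_i)B_i^*y_i^\delta.
\end{equation*}

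Next, since each family $\{g_{\alpha_i}^i\}$ satisfies \textit{C1--C3} and each operator $L_i$ satisfies (\ref{eq:LiT suavidad}) with $u_i\in[0,a_i+2s_i]$, and since $\|y_i^\delta-y\|\le\delta$, Theorem~\ref{coro:coro de ordenes de convergencia} applied to the $i$-th scale with the scalar choice $\alpha_i=c_i\delta^{\varepsilon_i}$, $\varepsilon_i\in\bigl(0,\frac{2(a_i+s_i)}{a_i}\bigr)$, gives both $x_{i,\alpha_i}^\delta\to x^\dag$ and $\|x_{i,\alpha_i}^\delta-x^\dag\|=\mathcal O(\delta^{\sigma_i})$ with $\sigma_i=\min\bigl\{1-\tfrac{a_i\varepsilon_i}{2(a_i+s_i)},\tfrac{u_i\varepsilon_i}{2(a_i+s_i)}\bigr\}$. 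Using $\sum_{i=1}^N\eta_i=1$ and the triangle inequality,
\begin{equation*}
\|x_{\vec\alpha}^\delta-x^\dag\|=\Bigl\|\sum_{i=1}^N\eta_i(x_{i,\alpha_i}^\delta-x^\dag)\Bigr\|\le\sum_{i=1}^N\eta_i\|x_{i,\alpha_i}^\delta-x^\dag\|=\mathcal O(\delta^{\min_i\sigma_i}),
\end{equation*}
which proves \textit{i)} and \textit{ii)}. For \textit{iii)} observe that because the errors for the different indices are now decoupled (each $\varepsilon_i$ controls only $\sigma_i$), one may optimize each $\sigma_i$ separately. The balance $1-\tfrac{a_i\varepsilon_i}{2(a_i+s_i)}=\tfrac{u_i\varepsilon_i}{2(a_i+s_i)}$ yields $\varepsilon_i=\tfrac{2(a_i+s_i)}{a_i+u_i}$ and $\sigma_i=\tfrac{u_i}{a_i+u_i}$, hence $\sigma^{*}=\min_i\tfrac{u_i}{a_i+u_i}$.

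Finally, for \textit{iv)} I would directly compare $\sigma^*=\min_i\tfrac{u_i}{a_i+u_i}$ with $\sigma_{\ce}=\tfrac{p}{p+q}$, where $p\doteq\min_i\tfrac{u_i}{2(a_i+s_i)}$ and $q\doteq\max_i\tfrac{a_i}{2(a_i+s_i)}$. Since $\tfrac{u_i}{a_i+u_i}\ge\tfrac{p}{p+q}$ is equivalent to $u_i q\ge p\,a_i$, and since for each $i$ one has both $u_i q\ge\tfrac{a_iu_i}{2(a_i+s_i)}$ and $p\,a_i\le\tfrac{a_iu_i}{2(a_i+s_i)}$, the inequality holds termwise; taking the minimum over $i$ yields $\sigma^{*}\ge\sigma_{\ce}$.

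The main obstacle I anticipate is step one, namely justifying rigorously that the vector-valued spectral integral defined by (\ref{eq:int-espec-vect}) decouples componentwise when the underlying operator has the diagonal form $\vec B^*\vec B$. Once the representation $x_{\vec\alpha}^\delta=\sum_i\eta_i x_{i,\alpha_i}^\delta$ is established, the convergence and order estimates follow almost mechanically from the scalar theory, and the comparison in \textit{iv)} is a short algebraic manipulation; by contrast, the spectral decoupling requires careful handling of the canonical projections $E^{\vec B^*\vec B}_{\lambda,i}$ versus the scalar spectral families $\{E^{B_i^*B_i}_\lambda\}$ appearing in the definition.
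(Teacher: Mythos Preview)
Your proposal is correct and follows essentially the same route as the paper: decouple the vector-valued spectral integral via the block-diagonal structure of $\vec B^*\vec B$ to write $x_{\vec\alpha}^\delta=\sum_i\eta_i x_{i,\alpha_i}^\delta$, apply Theorem~\ref{coro:coro de ordenes de convergencia} componentwise, and conclude by the triangle inequality. Your argument for \textit{iv)} via the termwise inequality $u_i q\ge\tfrac{a_iu_i}{2(a_i+s_i)}\ge p\,a_i$ is a slightly cleaner variant of the paper's algebraic chain, but the content is the same; the spectral decoupling you flag as the main obstacle is exactly what the paper dispatches in one line by observing $[E_\lambda^{\vec B^*\vec B}\vec x]_i=E_\lambda^{B_i^*B_i}x_i$.
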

\begin{proof}
If $\ds \{E^{\scriptscriptstyle \vec B^*\vec
B}_{\lambda}\}_{\lambda \in \mathbb{R}}$ and
$\{E^{\scriptscriptstyle B_i^*B_i}_{\lambda}\}_{\lambda \in
\mathbb{R}}$ denote the spectral families of the operators $\vec
B^*\vec B$ and $B^*_iB_i$, respectively, from the definition of
$\vec B$ and (\ref{eq:B escalas multiples}), it can be immediately
seen that $[\vec B^*\vec B\,\vec x]_i=B^*_iB_i\,x_i$ and
$[E_\lambda^{\scriptscriptstyle \vec B^*\vec B}\,\vec
x]_i=E_\lambda^{\scriptscriptstyle B_i^*B_i}\,x_i$ and therefore,
from (\ref{eq:int-espec-vect}), it follows that $\forall\,\,\vec
x=(x_{\s 1},x_{\s 2},\ldots, x_{\s N})^T\in\X^N$
$$\vec g_{\vec\alpha}(\vec B^*\vec B)\vec
x=\left( g_{ \alpha_1}^{\s 1}(B^*_{\s 1}B_{\s 1})x_{\s 1},g_{
\alpha_2}^{\s 2}(B^*_{\s 2}B_{\s 2})x_{\s 2},\ldots,g_{
\alpha_N}^{\s N}(B^*_{\s N}B_{\s N})x_{\s N}\right)^T.$$
As in Theorem \ref{theorem:ordenes de convergencia multiples}, let
$x_{i,\alpha_i} ^\delta\in \X_{\s s_i}^{\s L_i}$ be defined by
\begin{equation}\label{eq:kquita}
x_{i,\alpha_i} ^\delta =L_i^{-s_i}g^{\s i}_{\alpha_i}(B_i^*B_i)B_i^*y_i^\delta.
\end{equation}
For each index $i$, $1\leq i\leq N$, let
$\ds\sigma_i\doteq\min\left\{1-\frac{a_i\varepsilon_i}{2(a_i+s_i)},\frac{u_i\varepsilon_i}{2(a_i+s_i)}\right\}$
and $\ds\sigma\doteq\min_{\s 1\leq i\leq N}\sigma_i$. Then
\begin{align}
\norm{x_{\vec\alpha}^\delta -x^\dag}&=\norm{\vec
\eta{\,\s\bullet}\left(\vec L^{-\vec s}\vec g_{\vec\alpha}(\vec B^*\vec B)\vec B^* \vec y\,^\delta\right) -x^\dag}& \nonumber\\
&=\norm{\sum_{i=1}^N \eta_i L_i^{-s_i}g^{\s i}_{\alpha_i}(B_i^*B_i)B_i^* y_i^\delta -x^\dag}& \nonumber\\
&=\norm{\sum_{i=1}^N \eta_i x_{i,\alpha_i} ^\delta-x^\dag}& \hspace{-1cm}(\text{by (\ref{eq:kquita})})\nonumber\\
&=\norm{\sum_{i=1}^N \eta_i \left(x_{i,\alpha_i}^\delta-x^\dag\right)}& \hspace{-1cm}\left(\text{since }\sum_{i=1}^N\eta_i=1\right)&\nonumber\\
&\leq \sum_{i=1}^N \eta_i \norm{x_{i,\alpha_i}^\delta-x^\dag}&\label{eq:kquita2}\\
&\leq \sum_{i=1}^N \eta_i \,c_i\,\delta^{\sigma_i} &  \hspace{-1cm}(\text{for } \vec\alpha \text{ as in (\ref{eq:elecalphamultiples vect}), by Theorem \ref{coro:coro de ordenes de convergencia} \textit{ii)}})\nonumber\\
&\leq C\, \delta^\sigma.&\nonumber
\end{align}
This proves \textit{i)} and \textit{ii)}.

Now, if the vector-valued regularization parameter $\vec\alpha$ in
(\ref{eq:elecalphamultiples vect}) is chosen so that
$\varepsilon_i=\frac{2(a_i+s_i)}{a_i+u_i},\,\,\forall
i=1,2,\ldots,N$, then by virtue of Theorem \ref{coro:coro de
ordenes de convergencia} \textit{iii)} it follows that there exist
positive constants $c_{\s 1},c_{\s 2},\ldots,c_{\s N}$, such that
$\norm{x_{i,\alpha_i}^\delta-x^\dag}\leq
c_i\,\delta^{\frac{u_i}{a_i+u_i}},\,\,\forall i=1,2,\ldots,N$.
Then it follows from (\ref{eq:kquita2}) that
\begin{align*}
\norm{x_{\vec\alpha}^\delta-x^\dag}&\leq \sum_{i=1}^N \eta_i
c_i\,\delta^{\frac{u_i}{a_i+u_i}}\\
&\leq C\delta^{\sigma^{\s *}},
\end{align*}
where $\ds\sigma^*=\min_{\s 1\leq i\leq N}\frac{u_i}{a_i+u_i}$. It
is also clear that for $u_i$ and $a_i$ fixed, this order of
convergence is optimal and, as we can see, independent of the
choice of  $\vec s$. This proves \textit{iii)}.

Finally, to prove \textit{iv)} we must verify that
$\sigma_\ce\leq\sigma^*$, where $\sigma_\ce$ is the optimal order
in Theorem \ref{theorem:ordenes de convergencia multiples}
\textit{iii)}, that is
$$\ds\sigma_\ce=\frac{\ds\min_{\s 1\leq i \leq
N}\frac{u_i}{2(a_i+s_i)}}{\ds\min_{\s 1\leq i\leq
N}\frac{u_i}{2(a_i+s_i)}+ \ds\max_{\s 1\leq i\leq
N}\frac{a_i}{2(a_i+s_i)}}.$$
For that, observe that since $a_i,\,u_i$ and $s_i$ are all
positive, there holds
\begin{align*}
\max_{\s 1\leq i\leq N}\left(\frac{2(a_i+s_i)}{u_i}\right)\max_{\s
1\leq i\leq N} \left(\frac{a_i}{2(a_i+s_i)}\right)&\geq \max_{\s
1\leq i\leq N}\left(\frac{a_i}{u_i}\right),
\end{align*}
or equivalently
\begin{align*}
\frac{\ds\max_{\s
1\leq i\leq N} \left(\frac{a_i}{2(a_i+s_i)}\right)}{\ds\min_{\s 1\leq i\leq N}\left(\frac{u_i}{2(a_i+s_i)}\right)}&\geq
\max_{\s
1\leq i\leq N}\left(\frac{a_i}{u_i}\right),
\end{align*}
from where it follows that
\begin{align*}
\frac{1}{1+ \frac{\ds\max_{\s
1\leq i\leq N} \left(\frac{a_i}{2(a_i+s_i)}\right)}{\ds\min_{\s 1\leq i\leq N}\left(\frac{u_i}{2(a_i+s_i)}\right)}}&\leq
\frac{1}{1+\ds\max_{\s
1\leq i\leq N}\left(\frac{a_i}{u_i}\right) }\\
&=\frac{1}{\ds\max_{\s
1\leq i\leq N}\left(1+\frac{a_i}{u_i}\right)}\\
&=\frac{1}{\ds\max_{\s 1\leq i\leq
N}\left(\frac{u_i+a_i}{u_i}\right)},
\end{align*}
and therefore
\begin{align*}
\ds\sigma_\ce=\frac{\ds\min_{\s 1\leq i \leq
N}\frac{u_i}{2(a_i+s_i)}}{\ds\min_{\s 1\leq i\leq
N}\frac{u_i}{2(a_i+s_i)}+ \ds\max_{\s 1\leq i\leq
N}\frac{a_i}{2(a_i+s_i)}}\leq \ds\min_{\s
1\leq i\leq N} \left(\frac{u_i}{a_i+u_i}\right),
\end{align*}
that is $\sigma_\ce\leq\sigma^*$, as we wanted to prove.
\end{proof}
In the presence of a fixed noise level $\delta$ in the $N$
observations $y_{\s 1}^\delta,y_{\s 2}^\delta,,\ldots, y_{\s
N}^\delta$, in light of Theorem \ref{coro:coro de ordenes de
convergencia} \textit{iii)}, one should not expect that the order
of convergence $\mathcal O(\delta^{\sigma^{\s *}})=\mathcal
O(\delta^{\min\frac{u_i}{a_i+u_i}})$ in Theorem
\ref{theorem:ordenes de convergencia multiples vect} can be
improved. However, if the noise levels can be controlled, then by
appropriately doing so on those components on which it is known
that the degree of regularity of the exact solution $x^\dag$ on
the corresponding Hilbert scale (measured in terms of $u_i$) is
relatively small or the corresponding parameter of comparison of
relative regularity between the operators $T$ and $L_i^{-1}$,
measured in terms of $a_i$ (see (\ref{eq:LiT suavidad})\,), is
relatively large, then the order of convergence $\mathcal
O(\delta^{\sigma^{\s *}})$ can in fact be improved. More precisely
we have the following result.
\begin{thm}\label{theorem:ordenes de convergencia multiples vectorial
extendido} Let $\X,\,\Y,\,\X^N,\,\Y^N,\,T,\,\vec T,\,L_i,\,
u_i,\,a_i,\,1\le i\le N,\,\vec u,\,\vec s,\,\vec L,\,\vec B=\vec
T\vec L^{-\vec s}$, $ B_i=TL^{-s_i},\,\vec\alpha,\,$ $\vec
g_{\vec\alpha}, \,y\in\D(T^\dagger)\,, x^\dag=T^\dag y$, $\vec
x^\dag\in \X_{\s\vec u}^{\s\vec L}$ and $\vec\eta$, all as in
Theorem \ref{theorem:ordenes de convergencia multiples vect}. Let
$y_{\s 1}^{\delta_{\s 1}},y_{\s 2}^{\delta_{\s 2}},\ldots, y_{\s
N}^{\delta_{\s N}}\in\Y$ be such that $\norm{y_{\s i}^{\delta_i}
-y}\leq \delta_i\,\,\forall\,\,i=1,2,...,N,$
$\vec\delta=(\delta_{\s 1},\delta_{\s 2},\ldots,\delta_{\s N})^T$
and $\vec y\,^{\vec\delta}\doteq(y_{\s 1}^{\delta_{\s 1}},y_{\s
2}^{\delta_{\s 2}},\ldots, y_{\s N}^{\delta_{\s N}})^T\in\Y^N$ and
define now the regularized solution $x_{\vec\alpha}^{\vec\delta}$
of the problem $Tx=y$ as
\begin{equation}\label{eq:reg esc Hilbert multiples vec ext}
x_{\vec\alpha} ^{\vec\delta}= x_{\vec\alpha}
^{\vec\delta}\left(\vec g_{\vec \alpha},\,\vec\eta,\,\vec L,\,\vec
y\,^{\vec\delta},\vec s\right)\doteq \vec
\eta{\,\s\bullet}\left(\vec L^{-\vec s}\,\vec g_{\vec \alpha}(\vec
B^*\vec B)\vec B^* \vec y\,^{\vec\delta}\right).
\end{equation}
If $\delta_i=\delta^{p_i}$ with
\begin{equation}\label{eq:pesos ruido multiples}
p_i\geq\frac{\ds\max_{\s 1\leq k \leq N}\frac{u_k}{a_k+u_k}}{\frac{u_i}{a_i+u_i}},
\end{equation}
for every $1\leq i\leq N$, and the vector-valued regularization
parameter $\vec\alpha(\vec\delta)$ is chosen in the form
\begin{equation}\label{eq:elecalphamultiples vect ext}
\vec\alpha(\vec\delta)=\left(c_{\s 1}\delta_{\s 1}^{
\frac{2(a_1+s_1)}{a_1+u_1}}, c_{\s 2}\delta_{\s 2}^{
\frac{2(a_2+s_2)}{a_2+u_2}},\ldots,c_{\s N}\delta_{\s N}^{
\frac{2(a_N+s_N)}{a_N+u_N}} \right)^T,
\end{equation}
where $c_{\s 1},\,c_{\s 2},\ldots,c_{\s N}$ are arbitrary positive
constants, then
\begin{equation}\label{eq:thebestorder}
\norm{x_{\vec\alpha}^{\vec\delta}-x^\dag}=\mathcal O(\delta^{\hat\sigma}),
\end{equation}
where $\ds\hat\sigma\doteq\max_{\s 1\leq i\leq
N}\frac{u_i}{a_i+u_i}.$
\end{thm}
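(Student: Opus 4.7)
The plan is to reduce the statement, by the same decomposition used in Theorem \ref{theorem:ordenes de convergencia multiples vect}, to a per-component application of the optimal single-scale result of Theorem \ref{coro:coro de ordenes de convergencia} (part \textit{iii)}), and then to verify that the weights $p_i$ prescribed by (\ref{eq:pesos ruido multiples}) are exactly tuned so that every component contributes an error of order at most $\delta^{\hat\sigma}$.

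First I would exploit the diagonal structure of $\vec L$ and $\vec B$ together with the vector-valued spectral calculus in (\ref{eq:int-espec-vect}) to rewrite the regularized solution (\ref{eq:reg esc Hilbert multiples vec ext}) componentwise as
\begin{equation*}
x_{\vec\alpha}^{\vec\delta}=\sum_{i=1}^{N}\eta_i\,x_{i,\alpha_i}^{\delta_i},\qquad x_{i,\alpha_i}^{\delta_i}\doteq L_i^{-s_i}g_{\alpha_i}^{i}(B_i^{*}B_i)B_i^{*}y_i^{\delta_i},
\end{equation*}
exactly as in the proof of Theorem \ref{theorem:ordenes de convergencia multiples vect}. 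Using $\sum_i\eta_i=1$ and the triangle inequality this yields
\begin{equation*}
\norm{x_{\vec\alpha}^{\vec\delta}-x^\dag}\le\sum_{i=1}^{N}\eta_i\,\norm{x_{i,\alpha_i}^{\delta_i}-x^\dag},
\end{equation*}
reducing the problem to a componentwise estimate.

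Next, for each $i$ I would apply Theorem \ref{coro:coro de ordenes de convergencia} \textit{iii)} to the single scale $(\X_t^{L_i})_{t\in\mathbb R}$ with noise level $\delta_i$ and the optimal parameter choice $\alpha_i=c_i\delta_i^{2(a_i+s_i)/(a_i+u_i)}$, which is precisely (\ref{eq:elecalphamultiples vect ext}). This produces a constant $C_i>0$ such that
\begin{equation*}
\norm{x_{i,\alpha_i}^{\delta_i}-x^\dag}\le C_i\,\delta_i^{\frac{u_i}{a_i+u_i}}.
\end{equation*}
Substituting $\delta_i=\delta^{p_i}$ turns this into
\begin{equation*}
\norm{x_{i,\alpha_i}^{\delta_i}-x^\dag}\le C_i\,\delta^{p_i\frac{u_i}{a_i+u_i}}.
\end{equation*}

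The heart of the argument, and the only slightly delicate step, is the algebraic verification that the hypothesis (\ref{eq:pesos ruido multiples}) forces each exponent $p_iu_i/(a_i+u_i)$ to be at least $\hat\sigma=\max_{1\le k\le N}u_k/(a_k+u_k)$. Indeed, (\ref{eq:pesos ruido multiples}) can be rewritten as $p_i\,u_i/(a_i+u_i)\ge \hat\sigma$ for every $i$, so combining the previous bounds gives
\begin{equation*}
\norm{x_{\vec\alpha}^{\vec\delta}-x^\dag}\le \sum_{i=1}^{N}\eta_i C_i\,\delta^{p_i\frac{u_i}{a_i+u_i}}\le C\,\delta^{\hat\sigma}\qquad(C\doteq\sum_{i=1}^{N}\eta_iC_i),
\end{equation*}
for $\delta\in(0,1]$, which is precisely (\ref{eq:thebestorder}). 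There is no real obstacle beyond this bookkeeping: once one accepts the reduction to scalar estimates and the definition of $\hat\sigma$, the choice of $p_i$ in (\ref{eq:pesos ruido multiples}) is seen to be the minimal one that equalizes the worst component exponent with $\hat\sigma$, and any larger $p_i$ only improves the individual term.
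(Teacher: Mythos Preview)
Your proposal is correct and follows essentially the same route as the paper: decompose $x_{\vec\alpha}^{\vec\delta}$ into the convex combination $\sum_i\eta_i x_{i,\alpha_i}^{\delta_i}$ via the diagonal structure, apply Theorem \ref{coro:coro de ordenes de convergencia} \textit{iii)} componentwise with the optimal choice of $\alpha_i$, substitute $\delta_i=\delta^{p_i}$, and use (\ref{eq:pesos ruido multiples}) to bound each exponent below by $\hat\sigma$. The paper's proof is line-for-line the same argument.
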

\begin{proof}
Let $\alpha_i\doteq c_{\s i}\delta_{\s i}^{
\frac{2(a_i+s_i)}{a_i+u_i}}$ and $x_{i,\alpha_i} ^{\delta_i}
=L_i^{-s_i}g^{\s i}_{\alpha_i}(B_i^*B_i)B_i^*y_i^{\delta_i}$. By
virtue of Theorem \ref{coro:coro de ordenes de convergencia}
\textit{iii)} it follows that there exist constants $k_{\s 1},
k_{\s 2},\ldots, k_{\s N}$ such that
\begin{equation}\label{eq:chuchi2}
\norm{x_{i,\alpha_i} ^{\delta_i}-x^\dag}\leq k_i\delta_i^{\frac{u_i}{a_i+u_i}}, \hspace{0.2cm}1\leq i\leq N.
\end{equation}
On the other hand, by following the same steps as in Theorem
\ref{theorem:ordenes de convergencia multiples vect}, for
$x_{\vec\alpha}^{\vec\delta}$ defined as in (\ref{eq:reg esc
Hilbert multiples vec ext}) one has that
\begin{align*}
\norm{x_{\vec\alpha}^{\vec\delta} -x^\dag}
&\leq \sum_{i=1}^N \eta_i \norm{x_{i,\alpha_i}^{\delta_i}-x^\dag}&\\
&\leq \sum_{i=1}^N \eta_i \,k_i\,\delta_i^{\frac{u_i}{a_i+u_i}} & (\text{by (\ref{eq:chuchi2})})\\
&=\sum_{i=1}^N \eta_i \,k_i\,\delta^{\frac{p_iu_i}{a_i+u_i}} & (\text{since }\delta_i=\delta^{p_i})\\
&\leq C\, \delta^{\ds\max_{\s 1\leq i\leq N} \frac{u_i}{u_i+a_i}}&(\text{by (\ref{eq:pesos ruido multiples})})\\
&=C\,\delta^{\hat\sigma}.
\end{align*}
\end{proof}
Note that in order to obtain the order of convergence in
(\ref{eq:thebestorder}) it is necessary that the noise level in
the $i^\text{th}$ component be $\delta_i=\delta^{p_i}$ with
$p_i\geq\frac{\ds\max_{\s 1\leq k \leq
N}\frac{u_k}{a_k+u_k}}{\frac{u_i}{a_i+u_i}}$ ($\geq 1\,\forall
\,\,i$). Hence, the precision in the observations must be improved
precisely in those components for which the regularity of $x^\dag$
as an element of the corresponding Hilbert scale, namely $u_i$, is
relatively small or the parameter $a_i$ is large.
\section{Conclusions}\label{sec:conclusions}
In this article several convergence results in Hilbert scales
under different source conditions are proved and orders of
convergence and optimal orders of convergence were derived. Also,
relations between those source conditions were proved. The concept
of a multiple Hilbert scale on a product space was introduced,
regularization methods on these scales were defined, first for the
case of a single observation and then for the case of multiple
observations. In the latter case, it was shown how vector-valued
regularization functions in these multiple Hilbert scales can be
used. In all cases convergence was proved and orders and optimal
orders of convergence were shown.

\bibliographystyle{amsplain}
\bibliography{ref}

\end{document}